\newtheorem{definition}{Definition}[section]
\newtheorem{theorem}{Theorem}
\newtheorem{lemma}{Lemma}
\newtheorem{remark}{Remark}
\newenvironment{proof}{\paragraph{Proof:}}{\hfill$\square$}
\begin{document}
	
	
	\title{Scaling effects on the periodic homogenization  of a reaction-diffusion-convection problem posed in homogeneous domains connected by a thin composite layer}

\author{Vishnu Raveendran$^{a,*} $, Emilio N.M. Cirillo$^b$, Ida de Bonis $^{c}$, Adrian Muntean$^a$, \\
$^a$ Department of Mathematics and Computer Science, Karlstad University, Sweden\\
$^b$ Dipartimento di Scienze di Base e Applicate per l’Ingegneria, \\ Sapienza Universit`a di Roma, Italy\\
$^c$ Universit`a degli Studi “Giustino Fortunato”, Benevento, Italy.\\
* vishnu.raveendran@kau.se}

	\date{\today} 
	\maketitle
	
	\begin{abstract}\label{abstract}
	We study the question of periodic homogenization of a variably scaled reaction-diffusion problem with non-linear drift posed for a domain crossed by a flat composite thin layer. The structure of the non-linearity in the drift was obtained in earlier works as hydrodynamic limit of a totally asymmetric simple exclusion process (TASEP) process for a population of interacting particles crossing a domain with obstacle. 
		
		Using energy-type  estimates as well as concepts like thin-layer convergence and two-scale convergence, we derive  the homogenized evolution equation and the  corresponding effective model parameters for a regularized problem. Special attention is paid to the derivation of the effective transmission conditions across the separating limit interface in essentially two different situations:  (i) finitely thin layer and (ii) infinitely thin layer. 
		
		This study should be seen as a preliminary step needed for the investigation of averaging fast non-linear drifts across material interfaces -- a topic with direct applications in the design of thin composite materials meant to be impenetrable to high-velocity impacts.
	\end{abstract}
		{\bf Keywords}: 35B27;35Q92.
		\\
		{\bf MSC2020}: Reaction-Diffusion-Convection equation; homogenization; thin layer; dimension reduction; Galerkin method;  two scale convergence; effective transmission condition.
\maketitle

\section{Introduction}\label{In}

\par Reaction-diffusion equations posed  for thin layers endowed with periodic microstructures arise as mathematical models for a large number of real-world applications. Prominent examples refer, for instance,  to blood flow through the blood vessels (here one considers the blood vessel walls as thin membranes with periodic microstructures), membrane filtration (see \cite{herterich2017optimizing}),  passage of oxygen particles through paperboard or through some other paper-based packaging materials (see \cite{nyflott2015influence}), formation of fingers in smoldering combustion \cite{fatima2014homogenization},   heat and current flow through thin organic light-emitting diodes (OLEDs) mounted on glass substrates \cite{grigornikahom}.

\par In this paper, we study the effect of varying scalings on the periodic  homogenization  and eventual dimension reduction of a perforated thin layer hosting  diffusion, chemical reactions, and nonlinear drift\footnote{The drift term is here the gradient of  a bounded, possibly  discontinuous polynomial. To keep things simple, we  use a suitable mollification of the drift to gain extra regularity. The mollifier function has support within $\overline{B(0,\delta)}$, where $\delta>0$ is independent of $\varepsilon$. We choose our mollifier such that,  as $\delta \rightarrow 0$, the regularized drift  converges strongly to the original nonlinear drift  in $L^{p}(\mathbb{R}^{2})$ for all $p\in (0,\infty )$.} as derived earlier as mean-field limit for a totally asymmetric simple exclusion process (TASEP) on a lattice; see \cite{CIRILLO2016436}. As microscopic domain $\Omega_{\varepsilon}\subset \mathbb{R}^{2}$, we have  two regions $\Omega_{\mathcal{L}}^{\varepsilon}$ and  $\Omega_{\mathcal{R}}^{\varepsilon}$ glued together through standard transmission conditions  via a static flat thin layer $\Omega_{\mathcal{M}}^{\varepsilon}$ (see Fig \ref{micmod}). The thin layer $\Omega_{\mathcal{M}}^{\varepsilon}$ is made of an array of periodic microstructures, while the sets $\Omega_{\mathcal{L}}^{\varepsilon}$ and  $\Omega_{\mathcal{R}}^{\varepsilon}$ are in fact non-oscillating. To describe the internal structure of $\Omega_{\mathcal{M}}^{\varepsilon}$, we replicate a reference cell $Z$ (see Fig \ref{scell}), whose height is scaled by $\varepsilon$ and its width by $\kappa(\varepsilon$). In our case, the assumed periodicity acts only in vertical direction. In each of the regions $\Omega_{\mathcal{L}}^{\varepsilon}$ and  $\Omega_{\mathcal{R}}^{\varepsilon}$,  the coefficients of the partial differential equations are independent of  $\varepsilon$. Instead, within the set $\Omega_{\mathcal{M}}^{\varepsilon}$ the coefficients of the evolution equation are assumed to satisfy a variable scaling. To be specific, 
we consider that both the time derivative term and the production-by-reaction term are scaled by $\varepsilon^{\alpha}$, the diffusion coefficient is  scaled by $\varepsilon^{\beta}$, while the drift term is scaled by $\varepsilon^{\gamma}$. The boundary production terms at the oscillating boundaries are proportional to $\varepsilon^{\xi}$. Here $\alpha$, $\beta$, $\xi$, $\gamma\in \mathbb{R}$ are dimensionless parameters. It is worth noting that the factors $\varepsilon^{\alpha}$, $\varepsilon^{\beta}$, and $\varepsilon^{\gamma}$ are referred to in the chemical engineering literature as Damk\"ohler numbers, while  $\varepsilon^{\xi}$ resembles the Thiele modulus (also called surface Damk\"ohler number). They are all ratios of characteristic time scales of pairwise combinations of partial physical processes; see e.g. \cite{cussler2009diffusion}. For instance,  $\varepsilon^{\beta}$ is of order of $\mathcal{O}\left(\frac{t_{diff}}{t_{reac}}\right)$, where $t_{diff}$ and $t_{reac}$ are the characteristic time scales of diffusion, and respectively, of reaction. The boundary conditions are chosen such that they correspond to the original interacting particle systems scenario. Consequently, we take non-homogeneous Dirichlet boundary conditions on the vertical boundaries of $\Omega_{\varepsilon}$ and non-homogeneous Neumann boundary conditions on the rest of the boundaries. 

\par Our main goal is to study how the different choices of the parameters $\alpha, \beta, \gamma$, and $\xi$ affects the structure of the upscaled equations, i.e. when  $\varepsilon \rightarrow 0$. From the modeling point of view, the main  interest lies in learning which limit transmission conditions correspond to the cases: (i) the finitely thin layer (Fig. \ref{fig5}) and (ii) the infinitely thin layer (Fig. \ref{fig4}) and how does depend on the choice of the overall scaling. In this context,  we set for (i) $\kappa(\varepsilon)=\varepsilon$, while  for (ii)  we consider $\kappa(\varepsilon)$ to be a constant independent of $\varepsilon$. Other choices of scaling of the geometry are also possible, especially if we extend the current discussion from 2D to a scenario in 3D. However, we believe that we captured the main ones, especially from the application point of view. 	This study should be seen as a preliminary step needed for the investigation of averaging fast non-linear drifts across material interfaces -- a topic with direct applications in the design of thin composite materials meant to be impenetrable to high-velocity impacts. Most of the upscaled models receive a double-porosity type structure; see  \cite{arbogast1990derivation} for more in this direction.

The main tools used in this context to derive the wanted upscaled evolution equations and corresponding transmission condition for a large variety of  choices of scalings include the energy method (see the basic idea of playing with variable scalings in \cite{peter2008different} or in \cite{VoAnh1282488}) combined with classical two-scale convergence/compactness results (see \cite{lukkassen2002two}) and two-scale convergence/compactness for thin layers (see \cite{neuss2007effective}). The current main difficulties lie in deriving $\varepsilon$-independent estimates for  all scaling options so that passing to the homogenization limit becomes possible in each case, dealing with the the non-linearity of the drift, as well as varying  $\kappa(\varepsilon)$. In this context, we bring in rigorous mathematical analysis results complementing our formal asymptotic calculations reported in \cite{cirillo2020upscaling}. As future step, our investigation will attempt to deal with fast drifts, that is it will be about entering the regime of $\gamma<0$.

\par For a basic understanding of homogenization theory in the broader context of asymptotic analysis,  we refer the reader to the standard monographs \cite{donato1999homogenization}, \cite{papanicolauasymp}, \cite{mei2010homogenization}, and \cite{bakhvalovhomogenization}, e.g.  Classical two-scale convergence and compactness result can be found in \cite{nguetseng1989general} and \cite{allaire1992homogen} ; see also  \cite{lukkassen2002two}. The earliest result that we know regarding homogenization and dimension reduction for a thin layer  including also a drift with a Navier-Stokes-type nonlinearity is \cite{Marui2000TwoscaleCF}; see also \cite{rohde2020homogenization} for a more recent account. The simultaneous homogenization  and dimension reduction of reaction-diffusion equations with nonlinear reaction rates  posed in a thin heterogeneous layer have been carefully studied in \cite{neuss2007effective}. In {\em loc. cit.}, the authors introduced a number of new techniques to derive effective transmission conditions along the layer. Our work follows very much the spirit of this paper, as well as of the follow-up investigations for thin channels \cite{effectiveapratim} and \cite{gahn2020singular}. 
More research is available on the simultaneous homogenization and dimension reduction. We mention here but a few of them which we think are closer to our investigations. Linear reaction-diffusion-convection equations coupled with non-linear surface chemical reactions for infinitely thin layers were studied in \cite{fatima2014homogenization} in the context of smoldering combustion. In \cite{fabricius2020pressure}, the 
 authors studied pressure-driven Stokes flow through a infinitely thin layer. A  double porosity scenario with jumps at sharp heterogeneities was studied in \cite{bunoiu2020upscaling}. Further work related to homogenization of infinitely thin layers, sharp interfaces, and other geometric singularities can be found in \cite{haller2021pressure}, \cite{stelzig_2012}, and \cite{AMAR2019111562}.
 This list of potentially relevant references is not exhaustive.
 
It is worth mentioning that it is a challenge to approximate numerically the obtained upscaled models (compare e.g. \cite{starnoni2021modelling}). However, due to the scale separation between the microscopic and the  macroscopic characteristic length scales, high performance computing strategies are available to handle efficient approximations of such double-porosity like models (dimensionally-reduced or not). We refer the reader, for instance,  to \cite{richardson2021parallel} and references cited therein for a possible parallelization strategy. 

\par We organize our paper as follows: In section \ref{mm},  we describe the model problem, its variable scaling, and  introduce the boundary and initial conditions including the perfect transmission conditions. To work with a problem having homogeneous Dirichlet boundary condition on vertical boundaries, we use an affine  transformation of the original problem and obtain the transformed problem with homogeneous Dirichlet boundary. The downside of employing the transformation is that we lose the  perfect transmission condition on the boundaries between the bulk regions and thin layer. 
In section \ref{wsmp}, we prove the existence and uniqueness of $\varepsilon$-dependent weak solution to our microscopic problem via the Galerkin method (see e.g. the standard lines of arguments from p.314 in \cite{evans2010partial}). In section \ref{tsctm}, we prove $\varepsilon$-independent energy estimates for the solution of microscopic problem later and point out that we can use the well-established concept of two-scale convergence for thin layers to treat our infinitely thin layer case. By using energy-type estimates and  compactness results we derive the two-scale limit equations of the upscaled problem. 
In section \ref{macro}, we make  choices for $\alpha$, $\beta$, $\gamma$,  and $\xi$ that we deem as potentially relevant to derive the corresponding upscaled equations and effective coefficients. In the final section, we propose an approximation of solutions of the non-regularized upscaled problem by using a direct method inspired from \cite{Pokorny}.

	\section{Microscopic model}\label{mm}
	\subsection{Setting of the problem}\label{sop}
	In this section, we describe the microscopic reaction-diffusion-drift model we have in mind.  The geometry where our equations are posed is sketched in Fig. \ref{micmod}.
	
	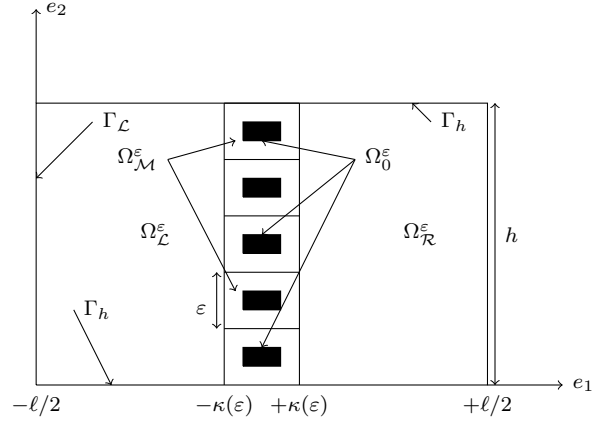
\begin{figure}[ht]
		\begin{center}
			\begin{tikzpicture}
			\draw (0,0) node [anchor=north] {{\scriptsize $-\ell/2$}} to (6,0) node [anchor=north] {{\scriptsize $+\ell/2$}}  to (6,3.75) to (0,3.75) to (0,0);
			\draw (2.5,0)node [anchor=north] {{\scriptsize $-\kappa(\varepsilon)$}}   to (2.5,3.75)  ;
			\draw (3.5,0)node [anchor=north] {{\scriptsize $+\kappa(\varepsilon)$}}to (3.5,3.75); 
			\draw (2.5,.75) to (3.5,.75);
			\draw(2.5,1.5) to (3.5,1.5);
			\draw(2.5,2.25) to (3.5,2.25);
			\draw(2.5,3) to (3.5,3);
			\draw(2.5,3.75) to (3.5,3.75);
			\draw [fill](2.75,.25) to (3.25,.25) to(3.25,.50) to (2.75,.50) to (2.75,.25);
			\draw [fill](2.75,1) to (3.25,1) to(3.25,1.25) to (2.75,1.25) to (2.75,1);
			\draw [fill](2.75,1.75) to (3.25,1.75) to(3.25,2) to (2.75,2) to (2.75,1.75);
			\draw [fill](2.75,2.5) to (3.25,2.5) to(3.25,2.75) to (2.75,2.75) to (2.75,2.5);
			\draw [fill](2.75,3.25) to (3.25,3.25) to(3.25,3.5) to (2.75,3.5) to (2.75,3.25);
			\draw [->](.75,3.5) node[anchor=west] {{\scriptsize $\Gamma_{\mathcal{L}} $}} to (0,2.75) ;
			\draw[->](0,3.75) to (0,5) node[anchor=west] {{\scriptsize $e_{2}$}};
			\draw[->](6,0) to (7,0) node[anchor=west] {{\scriptsize $e_{1}$}};
			\draw[->](.5,1) node[anchor=west] {{\scriptsize $\Gamma_{h}$}} to (1,0);
			\draw [<->] (6.1,0) to (6.1,3.75);
			\draw (6.1,2) node[anchor=west] {{\scriptsize $h$}};
			\draw (1.25,2) node[anchor=west] {{\scriptsize $\Omega_{\mathcal{L}}^{\varepsilon}$}};
			\draw (4.75,2) node[anchor=west] {{\scriptsize $\Omega_{\mathcal{R}}^{\varepsilon}$}};
			\draw[->](5.25,3.5) node[anchor=west] {{\scriptsize $\Gamma_{h}$}} to (5.00,3.75);
			\draw[->] (1.75,3) node[anchor=east] {{\scriptsize $\Omega_{\mathcal{M}}^{\varepsilon}$}} to (2.65,3.25);
			\draw[->] (1.75,3) to (2.65,1.25);
			\draw[->] (4.24,3) node[anchor=west] {{\scriptsize $\Omega_{0}^{\varepsilon}$}} to (3,3.25);
			\draw[->] (4.24,3)  to (3,.5);
			\draw[->] (4.24,3)  to (3,2);
			\draw [<->] (2.4,.75) to (2.4,1.5);
			\draw (2.4,1) node[anchor=east] {{\scriptsize $\varepsilon$}};
			\end{tikzpicture}
			\caption{Schematic representation of the microscopic model. }
			\label{micmod}
		\end{center}
	\end{figure}

Let $\varepsilon, \ell, \kappa (\varepsilon), \kappa, h, T>0$ with $ \frac{h}{\varepsilon}\in \mathbb{N}$, $\kappa(\varepsilon)=\varepsilon $ in the case of infinitely thin layer and $k(\varepsilon)=\kappa$ for the case of finitely thin layer. $\Omega$ be a two dimensional strip defined as $\Omega:=[-\ell /2,+\ell /2]\times [0,h]$.  Define $Y:=(-1,1)\times (0,1)$ and the standard cell $Z$ as $Y$ with an impenetrable compact rectangle called obstacle (denote as $Y_{0}$) with $Y_{0}=[a_{1},b_{1}]\times [a_{2},b_{2}] $ which  is placed in the center of the $Y$ (i.e $Z:=Y\backslash Y_{0}$ ).  Assume that $\partial Y_{0} $ is Lipchitz boundary and $\partial Y\cap {Y_{0}}=\emptyset $ (see Fig. \ref{scell}).

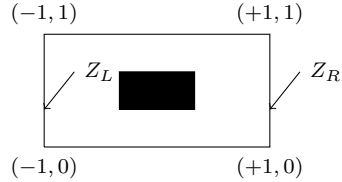
\begin{figure}[ht]
	\begin{center}
		\begin{tikzpicture}
		\draw (0,0) node [anchor=north] {{\scriptsize $(-1,0)$}} to (3,0) node [anchor=north] {{\scriptsize $(+1,0)$}}  to (3,1.5)node [anchor=south] {{\scriptsize $(+1,1)$}} to (0,1.5)node [anchor=south] {{\scriptsize $(-1,1)$}} to (0,0);
		\draw [fill] (1,.5) to (2,.5) to (2,1) to (1,1) to (1,.5);
		\draw[<-](0,0.5) to (0.4,1) node[anchor=west] {{\scriptsize $Z_{L}$}};
		\draw[<-](3,0.5) to (3.4,1) node[anchor=west] {{\scriptsize $Z_{R}$}};
		\end{tikzpicture}
		\caption{The standard cell $Z$ exhibiting a rectangular obstacle placed in the center.}
		\label{scell}
	\end{center}
\end{figure}

We define our microscopic domain $\Omega^{\varepsilon}\subset \Omega$ as 
\begin{equation}
\Omega^{\varepsilon}:=\Omega_{\mathcal{L}}^{\varepsilon}\cup \Omega_{R}^{\varepsilon}\cup \Omega_{\mathcal{M}}^{\varepsilon}\cup \mathcal{B_{L}}^{\varepsilon}\cup\mathcal{B_{R}}^{\varepsilon},
\end{equation}
where 

\begin{equation}\label{omg}
\begin{aligned}
\Omega_{\mathcal{L}}^{\varepsilon}:&=\left(-\ell /2,-\kappa(\varepsilon)\right)\times \left(0,h\right),\\
\Omega_{\mathcal{R}}^{\varepsilon}:&=\left(\kappa(\varepsilon),\ell /2\right)\times (0,h),\\
\Omega_{\mathcal{M}}^{\varepsilon}:&=((-\kappa(\varepsilon),\kappa(\varepsilon{}))\times (0,h))\backslash \Omega_{0}^{\varepsilon}
\end{aligned}
\end{equation}
where $e_{1}, e_{2}$ are standard unit vectors in $\mathbb{R}^{2}$, $k_{0}=\frac{h}{\varepsilon}$ and we denote the union of obstacles as $\Omega_{0}^{\varepsilon}$ 
 \begin{equation}
\Omega_{0}^{\varepsilon}:=\cup_{k=0}^{k_{0}}\left(ke_{2}+\left(\kappa(\varepsilon)[a_{1},b_{1}]\times \varepsilon [a_{2},b_{2}]\right)\right).
\end{equation}
We refer $\Omega_{\mathcal{M}}^{\varepsilon}$ as layer and $\Omega_{\mathcal{R}}^{\varepsilon}, \Omega_{\mathcal{L}}^{\varepsilon}$ as bulk region, and we cover boundary of $\Omega^{\varepsilon}$ by the following sets,
\begin{equation}\label{}
\begin{aligned}
\mathcal{B_{L}^{\varepsilon}}:=&\{-\kappa(\varepsilon)\}\times (0,h),\\
\mathcal{B_{R}^{\varepsilon}}:=&\{\kappa(\varepsilon)\}\times (0,h),\\
\Gamma_{\mathcal{L}}:=&\left\{-\frac{\ell}{2}\right\}\times [0,h],\\
\Gamma_{\mathcal{R}}:=&\left\{\frac{\ell}{2}\right\}\times [0,h],\\
\Gamma_{v}:=&\Gamma_{\mathcal{L}}\cup \Gamma_{\mathcal{R}},\\
\Gamma_{h}^{\varepsilon}:=&\left(\partial\Omega_{\mathcal{L}}^{\varepsilon} \cup \partial \Omega_{\mathcal{R}}^{\varepsilon}\right)\backslash \left(\mathcal{B_{L}}^{\varepsilon}\cup \mathcal{B_{R}^{\varepsilon}}\cup \Gamma_{v}\right),\\
\Gamma_{0}^{\varepsilon}:=&\partial \Omega_{\mathcal{M}}^{\varepsilon}\backslash \left(\mathcal{B_{L}}\cup \mathcal{B_{R}}\right).
\end{aligned}
\end{equation}
Note that $\partial\Omega^{\varepsilon}=\Gamma_{v}\cup \Gamma_{h}^{\varepsilon} \cup\Gamma_{0}^{\varepsilon} $.  The external unit normal vectors at  $\partial\Omega_{\mathcal{L}}^{\varepsilon}, \partial\Omega_{\mathcal{R}}^{\varepsilon}, \partial\Omega_{\mathcal{M}}^{\varepsilon}$ are denoted by $n_{l}, n_{r}, n_{m}^{\varepsilon}$ respectively.

\par  We consider the following reaction diffusion problem. Find $(u_{l}^{\varepsilon},u_{m}^{\varepsilon},u_{r}^{\varepsilon})$ satisfying the following equation
	\begin{equation}\label{rd1}
	\begin{aligned}
	\frac{\partial u_{l}^{\varepsilon}}{\partial t} +\mathrm{div}(-D_{L}\nabla u_{l}^{\varepsilon}+ B_{L}P_{\delta}(u_{l}^{\varepsilon}))&=f_{l} \;\;\;\; \mbox{on}  \ \Omega_{\mathcal{L}}^{\varepsilon} \times (0,T),\\
	\frac{\partial u_{r}^{\varepsilon}}{\partial t} +\mathrm{div}(-D_{R}\nabla u_{r}^{\varepsilon}+ B_{R}P_{\delta}(u_{r}^{\varepsilon}))&=f_{r} \;\;\;\; \mbox{on}  \ \Omega_{\mathcal{R}}^{\varepsilon} \times (0,T),\\
	\varepsilon^{\alpha}\frac{\partial u_{m}^{\varepsilon}}{\partial t} +\mathrm{div}(-\varepsilon^{\beta}D_{M}^{\varepsilon}\nabla u_{m}^{\varepsilon}+ \varepsilon^{\gamma}B_{M}^{\varepsilon}P_{\delta}(u_{m}^{\varepsilon}))&=\varepsilon^{\alpha}f_{m}^{\varepsilon} \;\;\;\; \mbox{on}  \ \Omega_{\mathcal{M}}^{\varepsilon} \times (0,T),
	\end{aligned}
	\end{equation}
	where $\alpha, \beta, \gamma \in \mathbb{R}$ and the parameter $\delta>0$ is fixed, $f_{l}:\Omega_{\mathcal{L}}^{\varepsilon} \rightarrow \mathbb{R}$, $f_{r}:\Omega_{\mathcal{R}}^{\varepsilon} \rightarrow \mathbb{R}$, $f_{m}^{\varepsilon}:\Omega_{\mathcal{M}}^{\varepsilon} \rightarrow \mathbb{R}$ are given functions , $D_{j}= \begin{bmatrix}
	d_{1}^{j} & 0 \\
	0 & d_{2}^{j}
	\end{bmatrix}$, 
	$B_{j}= \begin{bmatrix}
	b_{1}^{j}  \\
	b_{2}^{j}
	\end{bmatrix}$, 
	with $d_{1}^{j} ,d_{2}^{j},b_{1}^{j},b_{2}^{j}>0$ for $j\in \{L,R\}$, $D_{M}^{\varepsilon}(x_{1},x_{2})=D(x_{1}/\varepsilon,x_{2}/\varepsilon)$, $ (x_{1},x_{2})\in \mathbb{R}^{2}$, $D$ is a $2\times 2$ diagonal matrix with positive entries defined in standard cell $Z$ and 1-periodic, $B_{M}^{\varepsilon}=B(\frac{x_{1}}{\varepsilon},\frac{x_{2}}{\varepsilon})$, $B$ is a $2\times 1$ matrix with 1-periodic. $P_{\delta}(\cdot):\mathbb{R}\rightarrow \mathbb{R}$\footnote{	Notice that $P_{\delta}(r) \rightarrow {P}(r)$ in $L^{p}(\mathbb{R})$ as $\delta \rightarrow 0$ for all $p\in [1,\infty)$ (see p.717,  \cite{evans2010partial}).} is defined as 
	\begin{equation}\label{pd}
	P_{\delta}(r):=\int_{\mathbb{R}}\rho_{\delta}(y)P(r-y)dy,
	\end{equation}
	where $\rho_{\delta}(x):=\frac{1}{\delta^{n_{0}}}\rho(x/\delta)$,for $x\in \mathbb{R}$, $n_{0}\in \mathbb{N}$, $\rho$ is a mollifier, for instance we take
	\begin{equation}\label{pd1}
	\rho(x):=\begin{cases}
	Ce^{\left(\frac{1}{|x|^{2}-1}\right)} & |x|<1\\
	0 & |x|\geq 1,
	\end{cases}
	\end{equation}
     where the constant $C>0$ selected so that $\int_{\mathbb{R}^{n}}\rho dx =1$
	and
	\begin{equation}\label{pd2}
	P(r)=\left\{
	\begin{array}{ll}
	a_{0}+a_{1}r+\cdots+a_{m}r^{m} &\mbox{for } r\in [0,1] \\
	0& \mbox{ otherwise,}\\
	\end{array} 
	\right. 
	\end{equation}
	with $a_{k}\in \mathbb{R} \mbox{ for }k \in \mathbb{N}$.
\par We endow \eqref{rd1} with the following boundary and initial conditions
	\begin{equation}\label{bc1}
	\begin{aligned}
	u_{l}^{\varepsilon}&=U_{L}   \mbox{       on      } \Gamma_{\mathcal{L}}\times (0,T),\\
	u_{r}^{\varepsilon}&=U_{R}   \mbox{       on      } \Gamma_{\mathcal{R}}\times (0,T),
	\end{aligned}
	\end{equation}
	\begin{equation}
	\begin{aligned}
	(-\varepsilon^{\beta}D_{M}^{\varepsilon}\nabla u_{m}^{\varepsilon}+ \varepsilon^{\gamma}B_{M}^{\varepsilon}P_{\delta}(u_{m}^{\varepsilon}))\cdot n_{m}^{\varepsilon}&= \varepsilon ^{\xi} g_{0} ^{\varepsilon}\mbox{       on      } \Gamma_{0}^{\varepsilon}\times (0,T),\\
	(-D_{L}\nabla u_{l}^{\varepsilon}+ B_{L}P_{\delta}(u_{l}^{\varepsilon}))\cdot n_{l}&=  g_{l} \mbox{       on      } \left(\Gamma_{h}\cap \partial\Omega_{\mathcal{L}}^{\varepsilon}\right)\times [0,T],\\
	(-D_{R}\nabla u_{r}^{\varepsilon}+ B_{R}P_{\delta}(u_{l}^{\varepsilon}))\cdot n_{r}&=  g_{r} \mbox{       on      } \left(\Gamma_{h}\cap \partial\Omega_{\mathcal{R}}^{\varepsilon}\right)\times (0,T),
	\end{aligned}
	\end{equation}
	\begin{equation}
	\begin{aligned}
	u_{l}^{\varepsilon}(0,x)&=h_{l}^{\varepsilon}(x)  \mbox{      for all       } x\in \overline{\Omega}_{\mathcal{L}}^{\varepsilon},\\
	u_{r}^{\varepsilon}(0,x)&=h_{r}^{\varepsilon}(x)  \mbox{      for all       } x\in \overline{\Omega}_{\mathcal{R}}^{\varepsilon},\\
	u_{m}^{\varepsilon}(0,x)&=h_{m}^{\varepsilon}(x)  \mbox{      for all       } x\in \overline{\Omega}_{\mathcal{M}}^{\varepsilon},\\
	\end{aligned}
	\end{equation} 
	where $\xi>0$ is fixed real number, $U_{L}:\Gamma_{\mathcal{L}}\times [0,T]\rightarrow \mathbb{R}, U_{R}:\Gamma_{\mathcal{R}}\times [0,T]\rightarrow \mathbb{R}$, $ g_{l}:\left(\Gamma_{h}\cap \partial\Omega_{\mathcal{L}}^{\varepsilon}\right)\times [0,T]\rightarrow \mathbb{R}$, $ g_{r}: \left(\Gamma_{h}\cap \partial\Omega_{\mathcal{R}}^{\varepsilon}\right)\times [0,T]\rightarrow \mathbb{R}$, $ g_{0}^{\varepsilon}:\Gamma_{0}^{\varepsilon}\times [0,T]\rightarrow \mathbb{R}$, $h_{l}^{\varepsilon}:\overline{\Omega_{\mathcal{L}}^{\varepsilon}}\rightarrow\mathbb{R}$, $h_{r}^{\varepsilon}:\overline{\Omega_{\mathcal{R}}^{\varepsilon}}\rightarrow\mathbb{R}$, $h_{m}^{\varepsilon}:\overline{\Omega_{\mathcal{M}}^{\varepsilon}}\rightarrow\mathbb{R}$  are given functions so that $U_{L}\in L^{\infty}(0,T;H^{\frac{1}{2}}(\Gamma_{L})), U_{R} \in L^{\infty}(0,T;H^{\frac{1}{2}}(\Gamma_{R}))$. On surface $\mathcal{B_{L}^{\varepsilon}}$ and $\mathcal{B_{R}^{\varepsilon}}$ we assume the perfect transmission condition
	
		\begin{equation}\label{tc1}
	\begin{aligned}
	u_{l}^{\varepsilon}&=u_{M}^{\varepsilon}   \mbox{       on      }\mathcal{B}_{\mathcal{L}}^{\varepsilon},\\
	u_{r}^{\varepsilon}&=u_{M}^{\varepsilon}     \mbox{       on      } \mathcal{B}_{\mathcal{R}}^{\varepsilon},\\
	(-\varepsilon^{\beta}D_{M}^{\varepsilon}\nabla u_{m}^{\varepsilon}+ \varepsilon^{\gamma}B_{M}^{\varepsilon}P_{\delta}(u_{m}^{\varepsilon}))\cdot n_{m}^{\varepsilon}&=(-D_{L}\nabla u_{l}^{\varepsilon}+ B_{L}P_{\delta}(u_{l}^{\varepsilon}))\cdot n_{l}\mbox{       on      } \mathcal{B_{L}^{\varepsilon}},\\
		(-\varepsilon^{\beta}D_{M}^{\varepsilon}\nabla u_{m}^{\varepsilon}+ \varepsilon^{\gamma}B_{M}^{\varepsilon}P_{\delta}(u_{m}^{\varepsilon}))\cdot n_{m}^{\varepsilon}&=(-D_{R}\nabla u_{r}^{\varepsilon}+ B_{R}P_{\delta}(u_{l}^{\varepsilon}))\cdot n_{r}\mbox{       on      }\mathcal{B_{R}^{\varepsilon}}.\\
	\end{aligned}
	\end{equation} 

\subsection{Transformation of problem}\label{top}
To obtain the homogenous Dirichlet boundary condition for the problem \eqref{rd1}, we use the following transformation 
\begin{equation}\label{tr}
v_{i}^{\varepsilon}:=u_{i}^{\varepsilon}+\frac{1}{2} (x_{1}-1)U_{L}-\frac{1}{2} (x_{1}+1)U_{R}, 
\end{equation} 
for $i\in \{l,m,r\}$.
Let us denote $u_{b}(x,t):=\frac{1}{2} (x_{1}-1)U_{L}-\frac{1}{2} (x_{1}+1)U_{R},$ with $x=(x_{1},x_{2} ) \in \mathbb{R}^{2}$ and $t\in [0,T]$. So,
\begin{equation}\label{t}
u_{i}^{\varepsilon}=v_{i}^{\varepsilon}-u_{b}.
\end{equation}
Inserting \eqref{t} into \eqref{rd1} gives the transformed reaction diffusion equation
	\begin{equation}\label{rd2}
\begin{aligned}
\frac{\partial v_{l}^{\varepsilon}}{\partial t} +\mathrm{div}(-D_{L}\nabla v_{l}^{\varepsilon}+ B_{L}P_{\delta}(v_{l}^{\varepsilon}-u_{b}))&=f_{b_{l}}  &\mbox{on}  \ \Omega_{\mathcal{L}}^{\varepsilon} \times (0,T),\\
\frac{\partial v_{r}^{\varepsilon}}{\partial t} +\mathrm{div}(-D_{R}\nabla v_{r}^{\varepsilon}+ B_{R}P_{\delta}(v_{r}^{\varepsilon}-u_{b}))&=f_{b_{r}}&\mbox{on}  \ \Omega_{\mathcal{R}}^{\varepsilon} \times (0,T),\\
\varepsilon^{\alpha}\frac{\partial v_{m}^{\varepsilon}}{\partial t} +\mathrm{div}(-\varepsilon^{\beta}D_{M}^{\varepsilon}\nabla v_{m}^{\varepsilon}+ \varepsilon^{\gamma}B_{M}^{\varepsilon}P_{\delta}(v_{m}^{\varepsilon}-u_{b}))&=\varepsilon^{\alpha}f_{a_{m}}^{\varepsilon}&\\
&\hspace{1cm}+\varepsilon^{\beta}f_{b_{m}} &\mbox{on}  \ \Omega_{\mathcal{M}}^{\varepsilon} \times (0,T),
\end{aligned}
\end{equation}
endowed with the following boundary and initial conditions,

	\begin{equation}\label{bc2}
\begin{aligned}
v_{l}^{\varepsilon}&=0   \mbox{       on      } \Gamma_{\mathcal{L}}\times (0,T),\\
v_{r}^{\varepsilon}&=0  \mbox{       on      } \Gamma_{\mathcal{R}}\times(0,T),\\
(-\varepsilon^{\beta}D_{M}^{\varepsilon}\nabla v_{m}^{\varepsilon}+ \varepsilon^{\gamma}B_{M}^{\varepsilon}P_{\delta}(v_{m}^{\varepsilon}-u_{b}))\cdot n_{m}^{\varepsilon}&= \varepsilon ^{\xi} g_{0} ^{\varepsilon}+\varepsilon^{\beta}g_{b_{0}}\mbox{       on      } \Gamma_{0}^{\varepsilon}\times (0,T),\\
(-D_{L}\nabla v_{l}^{\varepsilon}+ B_{L}P_{\delta}(v_{l}^{\varepsilon}-u_{b}))\cdot n_{l}&=  g_{b{_{l}}} \mbox{       on      } \left(\Gamma_{h}^{\varepsilon}\cap \partial\Omega_{\mathcal{L}}^{\varepsilon}\right)\times (0,T),\\
(-D_{R}\nabla v_{r}^{\varepsilon}+ B_{R}P_{\delta}(v_{l}^{\varepsilon}-u_{b}))\cdot n_{r}&=  g_{b_{{r}}} \mbox{       on      } \left(\Gamma_{h}^{\varepsilon}\cap \partial\Omega_{\mathcal{R}}^{\varepsilon}\right)\times (0,T),\\
v_{l}^{\varepsilon}(0,x)&=h_{b_{{l}}}^{\varepsilon}(x)  \mbox{      for all       } x\in \overline{\Omega_{\mathcal{L}}^{\varepsilon}},\\
v_{r}^{\varepsilon}(0,x)&=h_{b_{r}}^{\varepsilon}(x)  \mbox{      for all       } x\in \overline{\Omega_{\mathcal{R}}^{\varepsilon}},\\
v_{m}^{\varepsilon}(0,x)&=h_{b_{m}}^{\varepsilon}(x)  \mbox{      for all       } x\in \overline{\Omega_{\mathcal{M}}^{\varepsilon}},\\
\end{aligned}
\end{equation} 
 where $f_{b_{l}}:=\partial_{t} u_{b}-div(D_{L}\nabla u_{b})+f_{l}$, $f_{b_{r}}:=\partial_{t} u_{b}-div(D_{R}\nabla u_{b})+f_{r}$, $f_{a_{m}}^{\varepsilon}:=\partial_{t} u_{b}+f_{{m}}^{\varepsilon}$, $f_{b_{m}}^{\varepsilon}:=-\mathrm{div}(D_{M}\nabla u_{b})$, $ g_{b_{0}}^{\varepsilon} :=-D_{M}^{\varepsilon}\nabla u_{b}$, $g_{b_{l}} =D_{L}\nabla u_{b}$, $g_{b_{r}} =D_{R}\nabla u_{b}$ and $h_{b_{i}}^{\varepsilon} =h_{i}^{\varepsilon}-u_{b}$ for every $i\in \{l,m,r\}$ and with the following transmission condition
 \
 	\begin{equation}\label{tc2}
 \begin{aligned}
 v_{l}^{\varepsilon}&=v_{m}^{\varepsilon}   \mbox{       on      }\mathcal{B}_{\mathcal{L}}^{\varepsilon},\\
 v_{r}^{\varepsilon}&=v_{m}^{\varepsilon}     \mbox{       on      } \mathcal{B}_{\mathcal{R}}^{\varepsilon},\\
 (-\varepsilon^{\beta}D_{M}^{\varepsilon}\nabla (v_{m}^{\varepsilon}-u_{b})+ \varepsilon^{\gamma}B_{M}^{\varepsilon}P_{\delta}(v_{m}^{\varepsilon}-u_{b}))\cdot n_{m}^{\varepsilon}&\\
 =(-D_{L}\nabla (v_{l}^{\varepsilon}-u_{b}&)+ B_{L}P_{\delta}(v_{l}^{\varepsilon}-u_{b}))\cdot n_{l}\mbox{       on      } \mathcal{B_{L}^{\varepsilon}},\\
 (-\varepsilon^{\beta}D_{M}^{\varepsilon}\nabla (v_{m}^{\varepsilon}-u_{b})+ \varepsilon^{\gamma}B_{M}^{\varepsilon}P_{\delta}(v_{m}^{\varepsilon}-{u_{b}}))\cdot n_{m}^{\varepsilon}&\\
 =(-D_{R}\nabla (v_{r}^{\varepsilon}-u_{b}&)+ B_{R}P_{\delta}(v_{l}^{\varepsilon}-u_{b}))\cdot n_{r}\mbox{       on      } \mathcal{B_{R}^{\varepsilon}}.\\
 \end{aligned}
 \end{equation} 
 \subsection{Assumptions on data}\label{aop}
 From now on $C$ denotes a positive real number possibly changing from line to line. When necessary, we will write explicit the parameters on which it will depend.

 \begin{enumerate}[label=({A}{{\arabic*}})]
 	\item\label{assump1}
 (Ellipticity condition) For every $i\in \{L,M,R\}$ with $j(L)=\mathcal{L}, j(R)=\mathcal{R} , j(M)=\mathcal{M}$ and for every $\eta\in \mathbb{R}^{2}$ there exist a $\theta>0$ such that,
 
 \begin{equation}\label{el}
 \theta \|\eta \|^{2} \leq
 \eta^{t}D_{i}\eta,
 \end{equation}
 and \begin{equation}\label{dinf}
 D_{i}\in L^{\infty}(\Omega_{j(i)}^{\varepsilon};\mathbb{R}^{4}).
 \end{equation}
 	\item \label{assump2}
 	 Concerning the drift coefficient, we assume  
 	 \begin{equation}\label{}
 	 \mathrm{div}B_{i} \in L^{\infty}(\Omega_{j(i)}^{\varepsilon}),
 	 \end{equation} and 
 \begin{equation}\label{a22}
 B_{i}\in L^{\infty}(\Omega_{j(i)}^{\varepsilon};\mathbb{R}^{2})
 \end{equation} for $i\in \{L,M,R\}$ with $j(L)=\mathcal{L}, j(R)=\mathcal{R} , j(M)=\mathcal{M}$.
 	 \item \label{assump3}
 For the reaction rate, we assume 
 	 $f_{b_{l}},\partial_{t}f_{b_{l}}\in L^{2}(0,T;L^{2}(\Omega_{\mathcal{L}}^{\varepsilon}))$,\\  $f_{b_{r}},\partial_{t}f_{b_{r}}\in L^{2}(0,T;L^{2}(\Omega_{\mathcal{R}}^{\varepsilon}))$,  $f_{b_{m}}^{\varepsilon},\partial_{t}f_{b_{m}}^{\varepsilon}\in L^{2}(0,T;L^{2}(\Omega_{\mathcal{M}}^{\varepsilon}))$ and \begin{equation}\label{}
 	 \varepsilon^{\alpha}\|f_{a_{m}}^{\varepsilon}\|_{L^{2}(0,T;L^{2}(\Omega_{\mathcal{M}}^{\varepsilon})}\leq C,
 	 \end{equation}
 	 for $a.e.$ $t\in (0,T).$ Together we assume there exist $f_{a_{0}}\in L^{2}((0,T)\times \Sigma\times Z)$ such that
 	 	\begin{equation}\label{tsf}
 	 	f_{a_{m}}^{\varepsilon}\overset{2-s}{\rightharpoonup}f_{a_{0}}.
 	 	\end{equation}
 	 \item  \label{assump4}
 	 $g_{b_{l}},\partial_{t}g_{b_{l}} \in L^{\infty}(0,T;L^{2}(\Gamma_{h}^{\varepsilon}\cap \partial\Omega_{\mathcal{L}}^{\varepsilon}))$,  $g_{b_{r}},\partial_{t}g_{b_{r}} \in L^{\infty}(0,T;L^{2}(\Gamma_{h}^{\varepsilon}\cap \partial\Omega_{\mathcal{R}}^{\varepsilon}))$, \\ $g_{b_{0}}^{\varepsilon},\partial_{t}g_{b_{0}}^{\varepsilon} \in L^{\infty}(0,T;L^{2}(\Gamma_{0}^{\varepsilon}))$, $g_{{0}}^{\varepsilon},\partial_{t}g_{{0}}^{\varepsilon} \in L^{\infty}(0,T;L^{2}(\Gamma_{0}^{\varepsilon}))$ and 
 	 
 	 \begin{equation}\label{}
 	 \varepsilon^{\xi-\frac{1}{2}}\|g_{0}^{\varepsilon}\|_{L^{2}(\Gamma_{0}^{\varepsilon})}^{2}\leq C,
 	 \end{equation}
 	  \begin{equation}\label{}
 	 \varepsilon^{\beta-\frac{1}{2}}\|g_{b_{0}}^{\varepsilon}\|_{L^{2}(\Gamma_{0}^{\varepsilon})}^{2}\leq C,
  \end{equation}
 	 
  for $a.e.$ $t\in (0,T)$. Together we assume there exist $g_{0}\in L^{2}((0,T)\times \Sigma \times \partial Y_{0})$ such that
 	 \begin{equation}\label{tsg}
 	 g_{0}^{\varepsilon}\overset{2-s}{\rightharpoonup}g_{0}.
 	 \end{equation}
 	 \item \label{assump5}
 	 For initial conditions, we assume  $h_{b_{l}}^{\varepsilon}\in H^{1}(\Omega_{\mathcal{L}}^{\varepsilon})$, $h_{b_{r}}^{\varepsilon}\in H^{1}(\Omega_{\mathcal{R}}^{\varepsilon})$, $h_{b_{m}}^{\varepsilon}\in H^{1}(\Omega_{\mathcal{M}}^{\varepsilon})$  with
 	 \begin{equation}\label{}
 	 \|h_{b_{l}}^{\varepsilon}\|_{L^{2}(\Omega_{\mathcal{L}}^{\varepsilon})}^{2}+\|h_{b_{r}}^{\varepsilon}\|_{L^{2}(\Omega_{\mathcal{R}}^{\varepsilon})}^{2}+\varepsilon^{\alpha}\|h_{b_{m}}^{\varepsilon}\|_{L^{2}(\Omega_{\mathcal{M}}^{\varepsilon})}^{2}\leq C,
 	 \end{equation} \\
 	and 
 		\begin{align}
 	\mathbbm{1}_{\Omega_{\mathcal{L}}^{\varepsilon}}h_{b_{l}}^{\varepsilon}&\rightarrow h_{b_{l}}^{0}\hspace{2cm}\mbox{on}\hspace{1cm}L^{2}((0,T)\times\Omega_{\mathcal{L}}),\label{A51}\\
 	\mathbbm{1}_{\Omega_{\mathcal{R}}^{\varepsilon}}h_{b_{r}}^{\varepsilon}&\rightarrow h_{b_{r}}^{0}\hspace{2cm}\mbox{on}\hspace{1cm}L^{2}((0,T)\times\Omega_{\mathcal{R}}),\label{A52}\\
 	h_{b_{m}}^{\varepsilon}&\overset{2-s}{\rightharpoonup} h_{b_{r}}^{0}\label{A53}
 	\end{align}
 	 \item \label{assump6}
 	  On parameter $\alpha, \beta, \gamma, \xi$ we assume, $\beta, \gamma \geq 0,$ $ \gamma\geq \beta$, $\beta\leq \xi +\frac{1}{2},$ $ \alpha+\frac{1}{2} \leq \beta,$ $ \alpha+\frac{1}{2} \leq\xi.$
 	  \item \label{assump7}
 	  $\partial_{t}u_{b}\in L^{2}(0,T;H^{1}(\Omega^{\varepsilon}))$.
 \end{enumerate}
 Note that we choose $U_{L}$ and $U_{R}$ defined in \eqref{bc1} according to satisfy \ref{assump1}-\ref{assump7}.
 \subsection{Weak formulation}\label{weakf}
 In this section we  propose the weak formulation of problem \eqref{rd2}. We use the following definitions 
 \begin{align}
 H^{1}(\Omega_{\mathcal{L}}^{\varepsilon};\Gamma_{\mathcal{L}}):&=\{u\in H^{1}(\Omega_{\mathcal{L}}^{\varepsilon}):u=0 \mbox{  on  } \Gamma_{\mathcal{L}}\},\\
H^{1}(\Omega_{\mathcal{R}}^{\varepsilon};\Gamma_{\mathcal{R}}):&=\{u\in H^{1}(\Omega_{\mathcal{R}}^{\varepsilon}):u=0 \mbox{  on  } \Gamma_{\mathcal{R}}\},\\
V_{\varepsilon}:&=\{(u_{l}^{\varepsilon},u_{m}^{\varepsilon},u_{r}^{\varepsilon})\in H(\Omega_{\mathcal{L}}^{\varepsilon};\Gamma_{\mathcal{L}})\times H^{1}(\Omega_{\mathcal{M}}^{\varepsilon})\times H(\Omega_{\mathcal{R}}^{\varepsilon};\Gamma_{\mathcal{R}}):\\
 &\hspace{2cm}u_{l}^{\varepsilon}=u_{m}^{\varepsilon} \mbox{  on  } \mathcal{B_{L}^{\varepsilon}},u_{r}^{\varepsilon}=u_{m}^{\varepsilon}\mbox{  on  } \mathcal{B_{R}^{\varepsilon}} \},\\
v^{\varepsilon}&:=(v_{l}^{\varepsilon},v_{m}^{\varepsilon},v_{r}^{\varepsilon}).
 \end{align}
 
 \begin{definition}\label{def1}

 The weak formulation of the problem \eqref{rd2}-\eqref{tc2} is to find $$(v_{l}^{\varepsilon},v_{m}^{\varepsilon},v_{r}^{\varepsilon})\in L^{2}(0,T;V_{\varepsilon})\cap H^{1}(0,T;L^{2}(\Omega_{\mathcal{L}}^{\varepsilon})\times L^{2}(\Omega_{\mathcal{M}}^{\varepsilon})\times L^{2}(\Omega_{\mathcal{R}}^{\varepsilon}) )$$

 such that $(v_{l}^{\varepsilon},v_{m}^{\varepsilon},v_{r}^{\varepsilon}) $ satisfies
 
 	\begin{multline}\label{var1}
 \int_{\Omega_{\mathcal{L}}^{\varepsilon}} \partial_{t} v_{l}^{\varepsilon}\phi_{1}dx
 +\int_{\Omega_{\mathcal{L}}^{\varepsilon}}D_{L}\nabla v_{l} ^{\varepsilon}\nabla\phi_{1} dx
 -\int_{\Omega_{\mathcal{L}}^{\varepsilon}}B_{L}P_{\delta}(v_{l}^{\varepsilon}-u_{b})\nabla \phi_{1} dx\\
 = \int_{\Omega_{\mathcal{L}}^{\varepsilon}}f_{b_{l}}\phi_{1} dx 
 -\int_{\Gamma_{h}\cap \partial\Omega_{\mathcal{L}}^{\varepsilon}}g_{b_{l}} \phi_{1} d\sigma+\int_{\mathcal{B_{L}^{\varepsilon}}}D_{L}\nabla u_{b}\cdot n_{l}\phi_{1}d\sigma \\\
 +\int_{\mathcal{B_{L}^{\varepsilon}}} (-\varepsilon^{\beta}D_{M}^{\varepsilon}\nabla (v_{m}^{\varepsilon}-u_{b})+ \varepsilon^{\gamma}B_{M}^{\varepsilon}P_{\delta}(v_{m}^{\varepsilon}-u_{b}))\cdot n_{m}^{\varepsilon}\phi_{1}d\sigma,
 \end{multline}

  	\begin{multline}\label{var2}
  \int_{\Omega_{\mathcal{R}}^{\varepsilon}} \partial_{t} v_{r}^{\varepsilon}\phi_{3}dx
 +\int_{\Omega_{\mathcal{R}}^{\varepsilon}}D_{R}\nabla v_{r} ^{\varepsilon}\nabla\phi_{3} dx
 -\int_{\Omega_{\mathcal{R}}^{\varepsilon}}B_{R}P_{\delta}(v_{r}^{\varepsilon}-u_{b})\nabla \phi_{3} dx\\
 = \int_{\Omega_{\mathcal{R}}^{\varepsilon}}f_{b_{r}}\phi_{1} dx 
 -\int_{\Gamma_{h}\cap \partial\Omega_{\mathcal{R}}^{\varepsilon}}g_{b_{r}} \phi_{3} d\sigma+\int_{\mathcal{B_{R}^{\varepsilon}}}D_{R}\nabla u_{b}\cdot n_{r}\phi_{3}d\sigma \\\
 +\int_{\mathcal{B_{R}^{\varepsilon}}} (-\varepsilon^{\beta}D_{M}^{\varepsilon}\nabla (v_{m}^{\varepsilon}-u_{b})+ \varepsilon^{\gamma}B_{M}^{\varepsilon}P_{\delta}(v_{m}^{\varepsilon}-u_{b}))\cdot n_{m}^{\varepsilon}\phi_{1}d\sigma,
 \end{multline}
 
   	\begin{multline}\label{var3}
\varepsilon^{\alpha}  \int_{\Omega_{\mathcal{M}}^{\varepsilon}} \partial_{t} v_{m}^{\varepsilon}\phi_{2}dx
+ \varepsilon^{\beta}\int_{\Omega_{\mathcal{M}}^{\varepsilon}}D_{M}^{\varepsilon}\nabla v_{m} ^{\varepsilon}\nabla\phi_{2} dx
 -\varepsilon^{\gamma}\int_{\Omega_{\mathcal{M}}^{\varepsilon}}B_{M}^{\varepsilon}P_{\delta}(v_{m}^{\varepsilon}-u_{b})\nabla \phi_{2} dx\\
 = \varepsilon^{\alpha}\int_{\Omega_{\mathcal{M}}^{\varepsilon}}f_{a_{m}}^{\varepsilon}\phi_{2} dx +\varepsilon^{\beta}\int_{\Omega_{\mathcal{M}}^{\varepsilon}}f_{b_{m}}^{\varepsilon}\phi_{2} dx
 -\varepsilon^{\xi}\int_{\Gamma_{0}^{\varepsilon}}g_{0}^{\varepsilon} \phi_{2} d\sigma-\varepsilon^{\beta}\int_{\Gamma_{0}^{\varepsilon}}g_{b_{0}}^{\varepsilon} \phi_{2} d\sigma\\
 +\varepsilon^{\beta}\int_{\mathcal{B_{L}^{\varepsilon}}}D_{M}\nabla u_{b}\cdot n_{m}^{\varepsilon}\phi_{2}d\sigma 
 -\int_{\mathcal{B_{L}^{\varepsilon}}} (-\varepsilon^{\beta}D_{M}^{\varepsilon}\nabla (v_{m}^{\varepsilon}-u_{b})+ \varepsilon^{\gamma}B_{M}^{\varepsilon}P_{\delta}(v_{m}^{\varepsilon}-u_{b}))\cdot n_{m}^{\varepsilon}\phi_{2}d\sigma\\
 +\varepsilon^{\beta}\int_{\mathcal{B_{R}^{\varepsilon}}}D_{M}\nabla u_{b}\cdot n_{m}^{\varepsilon}\phi_{2}d\sigma 
 -\int_{\mathcal{B_{R}^{\varepsilon}}} (-\varepsilon^{\beta}D_{M}^{\varepsilon}\nabla (v_{m}^{\varepsilon}-u_{b})+ \varepsilon^{\gamma}B_{M}^{\varepsilon}P_{\delta}(v_{m}^{\varepsilon}-u_{b}))\cdot n_{m}^{\varepsilon}\phi_{2}d\sigma,
 \end{multline}
 for every $\phi:=(\phi_{1},\phi_{2},\phi_{3})\in V_{\varepsilon}$ and $a.e.$  $t\in (0,T)$.
\end{definition}
 \par   Now using, $n_{l}=-n_{m}^{\varepsilon}$ on $\mathcal{B_{L}^{\varepsilon}}$, $n_{r}=-n_{m}^{\varepsilon}$ on $\mathcal{B_{R}^{\varepsilon}}$, $\phi_{1}=\phi_{2}$ on $\mathcal{B_{L}^{\varepsilon}}$, $\phi_{3}=\phi_{2}$ on $\mathcal{B_{L}^{\varepsilon}}$ and adding the equations \eqref{var1}-\eqref{var3}, we  find it is useful to rewrite Definition \ref{def1} as to find 
$$(v_{l}^{\varepsilon},v_{m}^{\varepsilon},v_{r}^{\varepsilon})\in  L^{2}(0,T;V_{\varepsilon})\cap H^{1}(0,T;L^{2}(\Omega_{\mathcal{L}}^{\varepsilon})\times L^{2}(\Omega_{\mathcal{M}}^{\varepsilon})\times L^{2}(\Omega_{\mathcal{R}}^{\varepsilon}) )$$
  such that
  
 	\begin{multline}\label{wf}
 	  \int_{\Omega_{\mathcal{L}}^{\varepsilon}} \partial_{t} v_{l}^{\varepsilon}\phi_{1}dx+
 	  \int_{\Omega_{\mathcal{R}}^{\varepsilon}} \partial_{t} v_{r}^{\varepsilon}\phi_{3}dx+
 	 \varepsilon^{\alpha}   \int_{\Omega_{\mathcal{M}}^{\varepsilon}} \partial_{t} v_{m}^{\varepsilon}\phi_{2}dx
\\
+ \int_{\Omega_{\mathcal{L}}^{\varepsilon}}D_{L}\nabla v_{l} ^{\varepsilon}\nabla\phi_{1} dx+ \int_{\Omega_{\mathcal{R}}^{\varepsilon}}D_{R}\nabla v_{r} ^{\varepsilon}\nabla\phi_{3} dx+ \varepsilon^{\beta}\int_{\Omega_{\mathcal{M}}^{\varepsilon}}D_{M}^{\varepsilon}\nabla v_{m} ^{\varepsilon}\nabla\phi_{2} dx\\-\int_{\Omega_{\mathcal{L}}^{\varepsilon}}B_{L}P_{\delta}(v_{l}^{\varepsilon}-u_{b})\nabla \phi_{1} dx
-\int_{\Omega_{\mathcal{R}}^{\varepsilon}}B_{R}P_{\delta}(v_{r}^{\varepsilon}-u_{b})\nabla \phi_{3} dx\\
-\varepsilon^{\gamma}\int_{\Omega_{\mathcal{M}}^{\varepsilon}}B_{M}^{\varepsilon}P_{\delta}(v_{m}^{\varepsilon}-u_{b})\nabla \phi_{2} dx\\
 =\int_{\Omega_{\mathcal{L}}^{\varepsilon}}f_{b_{l}}\phi_{1} dx  +\int_{\Omega_{\mathcal{R}}^{\varepsilon}}f_{b_{r}}\phi_{3} dx +\varepsilon^{\alpha}\int_{\Omega_{\mathcal{M}}^{\varepsilon}}f_{a_{m}}^{\varepsilon}\phi_{2} dx +\varepsilon^{\beta}\int_{\Omega_{\mathcal{M}}^{\varepsilon}}f_{b_{m}}^{\varepsilon}\phi_{2} dx\\
 -\int_{\Gamma_{h}^{\varepsilon}\cap \partial\Omega_{\mathcal{L}}^{\varepsilon}}g_{b_{l}} \phi_{1} d\sigma-\int_{\Gamma_{h}^{\varepsilon}\cap \partial\Omega_{\mathcal{R}}^{\varepsilon}}g_{b_{r}} \phi_{3} d\sigma-\varepsilon^{\xi}\int_{\Gamma_{0}^{\varepsilon}}g_{0}^{\varepsilon} \phi_{2} d\sigma-\varepsilon^{\beta}\int_{\Gamma_{0}^{\varepsilon}}g_{b_{0}}^{\varepsilon} \phi_{2} d\sigma\\
 +\int_{\mathcal{B_{L}^{\varepsilon}}}(D_{L}-\varepsilon^{\beta}D_{M})\nabla u_{b}\cdot n_{l}\phi_{1}d\sigma+\int_{\mathcal{B_{R}^{\varepsilon}}}(D_{R}-\varepsilon^{\beta}D_{M})\nabla u_{b}\cdot n_{r}\phi_{3}d\sigma 
 \end{multline}
 
 for every $\phi\in V_{\varepsilon}$ and $a.e.$  $t\in (0,T)$ with the initial condition \eqref{bc2}, namely
 \begin{equation}\label{ic}
 \begin{aligned}
 v_{l}^{\varepsilon}(0,x)&=h_{b_{{l}}}^{\varepsilon}(x)  \mbox{      for all       } x\in {\overline{\Omega}_{\mathcal{L}}^{\varepsilon}},\\
 v_{r}^{\varepsilon}(0,x)&=h_{b_{r}}^{\varepsilon}(x)  \mbox{      for all       } x\in {\overline{\Omega}_{\mathcal{R}}^{\varepsilon}},\\
 v_{m}^{\varepsilon}(0,x)&=h_{b_{m}}^{\varepsilon}(x)  \mbox{      for all       } x\in {\overline{\Omega}_{\mathcal{M}}^{\varepsilon}}.
 \end{aligned}
 \end{equation}
We denote the $\varepsilon$ dependent problem \eqref{wf} together with \eqref{ic} as $\left({P_{\varepsilon}}\right)$ problem.

	\section{Weak solvability of the microscopic problem}\label{wsmp}
	The following theorem establishes  existence and uniqueness of the weak solution of the problem \eqref{rd1}. 
	\begin{theorem}\label{T1}
		Under the assumption \ref{assump1}-\ref{assump6} there exists a unique solution of \eqref{wf}.

	\end{theorem}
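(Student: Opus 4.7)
The plan is to follow the standard Galerkin approximation scheme for parabolic problems as outlined in Evans \cite{evans2010partial}, adapted to the three-component setting with the coupling built into the space $V_{\varepsilon}$. Since $V_{\varepsilon}$ is a separable Hilbert space continuously embedded in $H_{\varepsilon}:=L^{2}(\Omega_{\mathcal{L}}^{\varepsilon})\times L^{2}(\Omega_{\mathcal{M}}^{\varepsilon})\times L^{2}(\Omega_{\mathcal{R}}^{\varepsilon})$, I would first pick a countable basis $\{w_{k}\}_{k\in\mathbb{N}}$ of $V_{\varepsilon}$ that is orthonormal with respect to the weighted inner product on $H_{\varepsilon}$ with weight $(1,\varepsilon^{\alpha},1)$ on the three components (this encodes the scaling of the time derivative in the middle region and avoids artificial blow-up in the ODE system below). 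Then I would look for Galerkin approximants $v^{\varepsilon,n}(t,x)=\sum_{k=1}^{n}d_{k}^{n}(t)w_{k}(x)$ obtained by testing \eqref{wf} against $w_{1},\dots,w_{n}$. Because $P_{\delta}$ is the convolution of a bounded, compactly supported polynomial with a smooth mollifier, it is globally $C^{\infty}$ with bounded first derivative, so the resulting system of ODEs for $(d_{k}^{n})$ has a globally Lipschitz right-hand side, and Carathéodory's theorem produces a unique absolutely continuous solution on $[0,T]$.

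The next step is to derive $n$-independent (but possibly $\varepsilon$-dependent) energy estimates; the $\varepsilon$-uniform versions are deferred to Section \ref{tsctm}. Testing the Galerkin equations with $v^{\varepsilon,n}$ itself and invoking the ellipticity \ref{assump1}, the $L^{\infty}$-bounds on $B_{i}$ from \ref{assump2}, and the uniform boundedness of $P_{\delta}$, I would obtain
\begin{align*}
\tfrac{1}{2}\tfrac{d}{dt}\bigl(\|v_{l}^{\varepsilon,n}\|_{L^{2}}^{2}+\varepsilon^{\alpha}\|v_{m}^{\varepsilon,n}\|_{L^{2}}^{2}+\|v_{r}^{\varepsilon,n}\|_{L^{2}}^{2}\bigr)+\theta\bigl(\|\nabla v_{l}^{\varepsilon,n}\|_{L^{2}}^{2}+\varepsilon^{\beta}\|\nabla v_{m}^{\varepsilon,n}\|_{L^{2}}^{2}+\|\nabla v_{r}^{\varepsilon,n}\|_{L^{2}}^{2}\bigr)\leq \mathrm{RHS},
\end{align*}
where RHS is controlled by the data norms from \ref{assump3}--\ref{assump5}, a trace inequality on $\Gamma_{0}^{\varepsilon}$ and $\mathcal{B}_{\mathcal{L}}^{\varepsilon}\cup\mathcal{B}_{\mathcal{R}}^{\varepsilon}$ for fixed $\varepsilon$, and Young's inequality to absorb gradient contributions from the drift and boundary terms into the diffusive coercivity. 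Gronwall's lemma then yields a uniform bound in $L^{\infty}(0,T;H_{\varepsilon})\cap L^{2}(0,T;V_{\varepsilon})$. A parallel test with $\partial_{t}v^{\varepsilon,n}$, together with the assumptions on $\partial_{t}f$ and $\partial_{t}g$ in \ref{assump3}--\ref{assump4}, provides a bound on $\partial_{t}v^{\varepsilon,n}$ in $L^{2}(0,T;H_{\varepsilon})$.

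To pass to the limit $n\to\infty$, I would use Banach-Alaoglu to extract a subsequence converging weakly in $L^{2}(0,T;V_{\varepsilon})$ and weakly-$*$ in $L^{\infty}(0,T;H_{\varepsilon})$; then the Aubin-Lions lemma (using the $\partial_{t}$-bound) gives strong convergence in $L^{2}(0,T;H_{\varepsilon})$. Combined with the global Lipschitz continuity of $P_{\delta}$, this yields $P_{\delta}(v^{\varepsilon,n}-u_{b})\to P_{\delta}(v^{\varepsilon}-u_{b})$ strongly in $L^{2}$, so every term in \eqref{wf} passes to the limit. The initial condition \eqref{ic} is recovered by the standard continuity of the trace map on $\{u\in L^{2}(0,T;V_{\varepsilon}):\partial_{t}u\in L^{2}(0,T;V_{\varepsilon}^{*})\}\hookrightarrow C([0,T];H_{\varepsilon})$ together with \ref{assump5}. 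For uniqueness, take two solutions, form the difference $w^{\varepsilon}$, test with $w^{\varepsilon}$, and use that $P_{\delta}$ has a Lipschitz constant $L_{\delta}$ independent of $\varepsilon$ to absorb the drift difference via Young's inequality, closing the argument with Gronwall.

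The main obstacle is the careful bookkeeping around the coupling constraints $v_{l}^{\varepsilon}=v_{m}^{\varepsilon}$ on $\mathcal{B}_{\mathcal{L}}^{\varepsilon}$ and $v_{r}^{\varepsilon}=v_{m}^{\varepsilon}$ on $\mathcal{B}_{\mathcal{R}}^{\varepsilon}$ baked into $V_{\varepsilon}$: the Galerkin basis must respect these matching conditions, and the boundary terms appearing in \eqref{var1}--\eqref{var3} cancel only after summing the three equations (as noted in the transition from Definition \ref{def1} to \eqref{wf}). A subordinate technical point is that the $\varepsilon^{\alpha}$-weighted time derivative and the $\varepsilon^{\beta}$-weighted diffusion in the layer give a degenerate norm as $\varepsilon\to0$, but this is harmless here because $\varepsilon>0$ is fixed throughout Theorem \ref{T1}.
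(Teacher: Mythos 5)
Your proposal is correct and follows the same route as the paper: the paper's proof of Theorem \ref{T1} is a one-line appeal to the Galerkin method, and your argument is a sound, detailed implementation of exactly that scheme (basis respecting the matching conditions in $V_{\varepsilon}$, globally Lipschitz ODE system thanks to the mollified drift $P_{\delta}$, energy estimates with Gronwall, Aubin--Lions for the limit passage, and a Gronwall-based uniqueness argument). Your observation that the Galerkin basis must be chosen inside $V_{\varepsilon}$ so that the interface terms cancel is a genuine point of care that the paper glosses over, but it does not constitute a different approach.
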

	\begin{proof}
	The proof follows via Galerkin method similar to that used in \cite{muntean2010multiscale}.
	\end{proof}
	\section{Two-scale convergence for thin membrane}\label{tsctm}	
	In this section, we first prove $\varepsilon$ independent \emph{a priori} energy estimates for the solution $(v_{l}^{\varepsilon},v_{m}^{\varepsilon},v_{r}^{\varepsilon})$ of \eqref{wf}. Then we will use these estimates to get the  two-scale limit of $(v_{l}^{\varepsilon},v_{m}^{\varepsilon},v_{r}^{\varepsilon})$  as $\varepsilon \rightarrow 0$.
	\subsection{\emph{ A priori }estimates} \label{ape}
	\begin{lemma}\label{L1}
There exists $C>0$ independent of $\varepsilon$, such that
\begin{equation}\label{l1}
\|v_{l}^{\varepsilon}\|_{L^{2}(\Gamma_{h}^{\varepsilon}\cap \partial\Omega_{\mathcal{L}}^{\varepsilon})}^{2}\leq C\|\nabla v_{l}^{\varepsilon}\|_{L^{2}(\Omega_{\mathcal{L}}^{\varepsilon})}^{2},
\end{equation}  
\begin{equation}\label{l2}
\|v_{l}^{\varepsilon}\|_{L^{2}(\Omega_{\mathcal{L}}^{\varepsilon})}^{2}\leq C\|\nabla v_{l}^{\varepsilon}\|_{L^{2}(\Omega_{\mathcal{L}}^{\varepsilon})}^{2}.
\end{equation}
\begin{equation}\label{l3}
\|v_{r}^{\varepsilon}\|_{L^{2}(\Gamma_{h}^{\varepsilon}\cap \partial\Omega_{\mathcal{R}}^{\varepsilon})}^{2}\leq C\|\nabla v_{r}^{\varepsilon}\|_{L^{2}(\Omega_{\mathcal{R}}^{\varepsilon})}^{2},
\end{equation}  
\begin{equation}\label{l4}
\|v_{r}^{\varepsilon}\|_{L^{2}(\Omega_{\mathcal{R}}^{\varepsilon})}^{2}\leq C\|\nabla v_{r}^{\varepsilon}\|_{L^{2}(\Omega_{\mathcal{R}}^{\varepsilon})}^{2}.
\end{equation}
	\end{lemma}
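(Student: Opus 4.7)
The plan is to prove the four inequalities directly, exploiting the homogeneous Dirichlet boundary condition on the vertical edges $\Gamma_\mathcal{L}$ and $\Gamma_\mathcal{R}$ together with the fact that $\Omega_\mathcal{L}^\varepsilon$ and $\Omega_\mathcal{R}^\varepsilon$ are rectangles whose widths in the $x_1$-direction are bounded above by $\ell$ uniformly in $\varepsilon$. Since \eqref{l3}--\eqref{l4} are obtained from \eqref{l1}--\eqref{l2} by a symmetric argument after reflecting $x_1 \mapsto -x_1$, I would only write the proofs for $v_l^\varepsilon$.

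For the Poincar\'e-type estimate \eqref{l2}, I would first prove it for smooth $v \in C^\infty(\overline{\Omega_\mathcal{L}^\varepsilon})$ vanishing on $\Gamma_\mathcal{L}$ and then extend by density to $H^1(\Omega_\mathcal{L}^\varepsilon;\Gamma_\mathcal{L})$. Writing $v(x_1,x_2) = \int_{-\ell/2}^{x_1} \partial_{x_1} v(s,x_2)\,ds$ and applying Cauchy--Schwarz yields $|v(x_1,x_2)|^2 \leq \bigl(x_1+\tfrac{\ell}{2}\bigr) \int_{-\ell/2}^{-\kappa(\varepsilon)} |\partial_{x_1} v(s,x_2)|^2\,ds$; integrating over $\Omega_\mathcal{L}^\varepsilon$ gives \eqref{l2} with a constant that depends only on $\ell$.

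For the trace inequality \eqref{l1}, I would use the one-dimensional identity $v(x_1,0)^2 = v(x_1,x_2)^2 - 2\int_0^{x_2} v(x_1,s)\,\partial_{x_2} v(x_1,s)\,ds$ (and its analog at $x_2=h$), apply the elementary bound $|2ab|\le a^2+b^2$, and integrate successively in $x_2$ over $(0,h)$ and in $x_1$ over $(-\ell/2,-\kappa(\varepsilon))$. This produces an estimate of the form $\|v_l^\varepsilon\|_{L^2(\Gamma_h^\varepsilon\cap\partial\Omega_\mathcal{L}^\varepsilon)}^2 \leq C(h)\bigl(\|v_l^\varepsilon\|_{L^2(\Omega_\mathcal{L}^\varepsilon)}^2 + \|\nabla v_l^\varepsilon\|_{L^2(\Omega_\mathcal{L}^\varepsilon)}^2\bigr)$; chaining with \eqref{l2} absorbs the $L^2$-norm and yields \eqref{l1}.

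The only delicate point is the uniformity in $\varepsilon$, but it is essentially automatic here: in both the infinitely thin layer case ($\kappa(\varepsilon)=\varepsilon$) and the finitely thin layer case ($\kappa(\varepsilon)=\kappa$), $\Omega_\mathcal{L}^\varepsilon$ is contained in the fixed rectangle $(-\ell/2,0)\times(0,h)$, so the constants arising from the fundamental-theorem-of-calculus arguments above depend only on $\ell$ and $h$. The main obstacle I foresee is merely notational bookkeeping of the trace integrals on the two horizontal strips making up $\Gamma_h^\varepsilon\cap\partial\Omega_\mathcal{L}^\varepsilon$; no essential analytic difficulty is expected.
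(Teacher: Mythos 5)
Your proof is correct, but it takes a more elementary and self-contained route than the paper. The paper does not carry out the fundamental-theorem-of-calculus computations: for \eqref{l1} it invokes the best-trace-constant result (Theorem 3.3 of Han's paper, with the explicit computation of $C(\varepsilon)$ for the rectangle $\Omega_{\mathcal{L}}^{\varepsilon}$ as in Example 4.3 there), and for \eqref{l2} it cites Proposition 2.1.1 of Nazarov--Repin on exact Poincar\'e constants; the uniformity in $\varepsilon$ is then argued by bounding the family of exact constants $C(\varepsilon)$ by a single $C$, which implicitly uses the monotone nesting $\Omega_{\mathcal{L}}^{\varepsilon_{2}}\subset\Omega_{\mathcal{L}}^{\varepsilon_{1}}$ for $\varepsilon_{1}\le\varepsilon_{2}$ highlighted in Remark \ref{remark1}. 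Your direct argument (one-dimensional integration from the Dirichlet edge $\Gamma_{\mathcal{L}}$ for the Poincar\'e bound, the standard identity $v(x_{1},0)^{2}=v(x_{1},x_{2})^{2}-2\int_{0}^{x_{2}}v\,\partial_{s}v\,ds$ averaged in $x_{2}$ for the trace bound, then chaining) buys transparency: the constants visibly depend only on $\ell$ and $h$, so no appeal to exact-constant literature or to the nesting of the domains is needed. The paper's approach is shorter on the page and yields sharper (indeed optimal) constants, which is irrelevant here since only $\varepsilon$-uniformity is used downstream. Both routes are sound; only make sure, as you note, to run the density argument in $H^{1}(\Omega_{\mathcal{L}}^{\varepsilon};\Gamma_{\mathcal{L}})$ and to treat both horizontal strips $x_{2}=0$ and $x_{2}=h$ of $\Gamma_{h}^{\varepsilon}\cap\partial\Omega_{\mathcal{L}}^{\varepsilon}$.
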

\begin{proof}
	The proof of \eqref{l1} is a simple application of Theorem 3.3 from \cite{han1992best}. For each $\varepsilon>0$, we can calculate the best trace constant $C(\varepsilon)$ exactly for $\Omega_{\mathcal{L}}^{\varepsilon}$ (see similar case in Example 4.3 of \cite{han1992best}). Then we bound the trace constants $C(\varepsilon)$ by a general constant $C>0$.
	\par The proof of \eqref{l2} is  application of Proposition 2.1.1 from \cite{nazarov2015exact}.\\
\end{proof}
\begin{remark}\label{remark1}The inequalities similar to \eqref{l1}-\eqref{l4} are not valid in the case of $v_{m}^{\varepsilon}$. Since in the case of $\Omega_{\mathcal{L}}^{\varepsilon}$ or $\Omega_{\mathcal{R}}^{\varepsilon}$ we have nested set $i.e$ for every $\varepsilon_{1}\leq \varepsilon_{2}$ we have $\Omega_{\mathcal{L}}^{\varepsilon_{2}}\subset \Omega_{\mathcal{L}}^{\varepsilon_{1}}$ and $\Omega_{\mathcal{R}}^{\varepsilon_{2}}\subset \Omega_{\mathcal{R}}^{\varepsilon_{1}}$. But in the case of $v_{m}^{\varepsilon}$, for $\varepsilon_{1}\neq\varepsilon_{2}$ we can not say either $\Omega_{\mathcal{M}}^{\varepsilon_{1}}\subset \Omega_{\mathcal{M}}^{\varepsilon_{2}}$ or $\Omega_{\mathcal{M}}^{\varepsilon_{2}}\subset \Omega_{\mathcal{M}}^{\varepsilon_{1}}$.
\end{remark}
 \begin{lemma}\label{L2}
There exists $C>0$ independent of $\varepsilon$, such that
\begin{equation}\label{l5}
\|v_{l}^{\varepsilon}\|_{L^{2}(\mathcal{B_{L}^{\varepsilon}})}^{2}\leq C\|\nabla v_{l}^{\varepsilon}\|_{L^{2}(\Omega_{\mathcal{L}}^{\varepsilon})}^{2},
\end{equation}  

\begin{equation}\label{l7}
\|v_{r}^{\varepsilon}\|_{L^{2}(\mathcal{B_{R}^{\varepsilon}})}^{2}\leq C\|\nabla v_{r}^{\varepsilon}\|_{L^{2}(\Omega_{\mathcal{R}}^{\varepsilon})}^{2}.
\end{equation}  
\begin{proof}
	Proof follows same lines of proof of Lemma \ref{L1}.
\end{proof}
\end{lemma}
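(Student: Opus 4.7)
The plan is to reduce both estimates to a one-dimensional Poincaré/trace argument along horizontal lines, exploiting the fact that $v_{l}^{\varepsilon}$ vanishes on $\Gamma_{\mathcal{L}}$ (and $v_{r}^{\varepsilon}$ on $\Gamma_{\mathcal{R}}$). This is essentially the same philosophy as Lemma \ref{L1}, but now the target trace boundary is the inner vertical interface $\mathcal{B}_{\mathcal{L}}^{\varepsilon}=\{-\kappa(\varepsilon)\}\times(0,h)$ rather than the top/bottom $\Gamma_h^{\varepsilon}$.

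For \eqref{l5}, I would argue by density and take $v_{l}^{\varepsilon}$ smooth up to the boundary. For a.e. $x_2\in(0,h)$, the fundamental theorem of calculus along the horizontal segment from $-\ell/2$ to $-\kappa(\varepsilon)$, combined with the Dirichlet condition $v_{l}^{\varepsilon}(-\ell/2,x_2)=0$, gives
\begin{equation*}
v_{l}^{\varepsilon}(-\kappa(\varepsilon),x_2)=\int_{-\ell/2}^{-\kappa(\varepsilon)}\partial_{x_1}v_{l}^{\varepsilon}(s,x_2)\,ds.
\end{equation*}
Cauchy--Schwarz then yields a pointwise-in-$x_2$ bound whose length factor is $\tfrac{\ell}{2}-\kappa(\varepsilon)\le \tfrac{\ell}{2}$; integrating over $x_2\in(0,h)$ produces \eqref{l5} with $C=\ell/2$. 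The argument for \eqref{l7} is the mirror image on $\Omega_{\mathcal{R}}^{\varepsilon}$, using the Dirichlet data on $\Gamma_{\mathcal{R}}$.

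The key point that makes the constant $\varepsilon$-independent is purely geometric: the horizontal length we integrate over is bounded above by $\ell/2$ uniformly in $\varepsilon$, and shrinking $\kappa(\varepsilon)$ only decreases it. Hence the same constant works for both scaling regimes $\kappa(\varepsilon)=\varepsilon$ and $\kappa(\varepsilon)=\kappa$. I do not anticipate a real obstacle here; the only subtle point worth flagging is the one already highlighted in Remark \ref{remark1}: this Dirichlet-anchored FTC argument cannot be transplanted to $v_{m}^{\varepsilon}$, since inside $\Omega_{\mathcal{M}}^{\varepsilon}$ there is no vertical edge carrying a homogeneous Dirichlet trace to start the integration from, and the domain itself changes non-monotonically with $\varepsilon$. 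Alternatively one could invoke the explicit best-trace-constant computation of Theorem~3.3 in \cite{han1992best} exactly as in Lemma~\ref{L1}, but the FTC route avoids tracking the sharp constant.
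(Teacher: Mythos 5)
Your proof is correct, but it takes a genuinely different and more self-contained route than the paper. The paper's proof of Lemma \ref{L2} simply asserts that it ``follows the same lines'' as Lemma \ref{L1}, i.e.\ it again invokes the best-trace-constant machinery of Theorem~3.3 in \cite{han1992best} (computing the trace constant $C(\varepsilon)$ for each $\Omega_{\mathcal{L}}^{\varepsilon}$ and bounding it uniformly) together with the Poincar\'e inequality from \cite{nazarov2015exact}. You instead give a direct fundamental-theorem-of-calculus argument along horizontal segments, anchored at the homogeneous Dirichlet data on $\Gamma_{\mathcal{L}}$ (resp.\ $\Gamma_{\mathcal{R}}$), followed by Cauchy--Schwarz and integration in $x_2$. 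This is entirely valid here: the segment from $-\ell/2$ to $-\kappa(\varepsilon)$ lies in $\Omega_{\mathcal{L}}^{\varepsilon}$ for a.e.\ $x_2$, its length is at most $\ell/2$ uniformly in $\varepsilon$, and the density of smooth functions vanishing on $\Gamma_{\mathcal{L}}$ in $H^{1}(\Omega_{\mathcal{L}}^{\varepsilon};\Gamma_{\mathcal{L}})$ lets you pass to the general case. What your approach buys is an explicit, sharp-enough constant ($C=\ell/2$) and complete independence from the external trace-constant literature; what the paper's citation-based approach buys is uniformity of presentation with Lemma \ref{L1} and applicability to less rectangular geometries. Your closing observation about why the argument cannot be transplanted to $v_{m}^{\varepsilon}$ correctly mirrors Remark \ref{remark1}.
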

Next we state Lemma 3.1 from \cite{effectiveapratim}.
	\begin{lemma}\label{LN1}
For all $v_{m}^{\varepsilon} \in H^{1}(\Omega_{\mathcal{M}}^{\varepsilon})$, there exist $C>0$ satisfying the following inequality
\begin{equation}\label{}
\|v_{m}^{\varepsilon}\|_{L^{2}(\Gamma_{0}^{\varepsilon})}\leq C\left(\varepsilon^{\frac{-1}{2}}\| v_{m}^{\varepsilon}\|_{L^{2}(\Omega_{\mathcal{M}}^{\varepsilon})}+\varepsilon^{\frac{1}{2}}\|\nabla  v_{m}^{\varepsilon}\|_{L^{2}(\Omega_{\mathcal{M}}^{\varepsilon})}\right)
\end{equation}
	\end{lemma}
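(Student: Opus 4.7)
The plan is to reduce the estimate to the standard trace inequality on the fixed reference cell $Z$ by an $\varepsilon$-rescaling argument, performed cell by cell. Since $h/\varepsilon = k_{0} \in \mathbb{N}$, I would first write $\Omega_{\mathcal{M}}^{\varepsilon}$ (in the natural regime $\kappa(\varepsilon)=\varepsilon$ in which the stated scalings arise) as the essentially disjoint union $\bigcup_{k=0}^{k_{0}-1}Z_{k}^{\varepsilon}$, where $Z_{k}^{\varepsilon}:=k\varepsilon e_{2}+\varepsilon Z$ is a translate of an $\varepsilon$-dilation of the fixed Lipschitz reference cell $Z$.

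On a single cell I perform the change of variables $y=(x-k\varepsilon e_{2})/\varepsilon$ and set $\tilde v(y):=v_{m}^{\varepsilon}(\varepsilon y+k\varepsilon e_{2})$, so that $\tilde v\in H^{1}(Z)$. The classical trace inequality on $Z$ produces a constant $C_{Z}>0$, independent of both $k$ and $\varepsilon$, such that
\begin{equation*}
\|\tilde v\|_{L^{2}(\partial Z)}^{2}\leq C_{Z}\bigl(\|\tilde v\|_{L^{2}(Z)}^{2}+\|\nabla_{y}\tilde v\|_{L^{2}(Z)}^{2}\bigr).
\end{equation*}
Undoing the dilation and tracking Jacobians in two space dimensions yields $\|v_{m}^{\varepsilon}\|_{L^{2}(\partial Z_{k}^{\varepsilon})}^{2}=\varepsilon\,\|\tilde v\|_{L^{2}(\partial Z)}^{2}$, $\|v_{m}^{\varepsilon}\|_{L^{2}(Z_{k}^{\varepsilon})}^{2}=\varepsilon^{2}\,\|\tilde v\|_{L^{2}(Z)}^{2}$, and $\|\nabla v_{m}^{\varepsilon}\|_{L^{2}(Z_{k}^{\varepsilon})}^{2}=\|\nabla_{y}\tilde v\|_{L^{2}(Z)}^{2}$. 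Substituting back into the fixed-cell trace inequality and multiplying both sides by $\varepsilon$ produces the per-cell bound
\begin{equation*}
\|v_{m}^{\varepsilon}\|_{L^{2}(\partial Z_{k}^{\varepsilon})}^{2}\leq C\bigl(\varepsilon^{-1}\|v_{m}^{\varepsilon}\|_{L^{2}(Z_{k}^{\varepsilon})}^{2}+\varepsilon\,\|\nabla v_{m}^{\varepsilon}\|_{L^{2}(Z_{k}^{\varepsilon})}^{2}\bigr).
\end{equation*}
Summing over $k=0,\ldots,k_{0}-1$ and discarding those parts of $\partial Z_{k}^{\varepsilon}$ that are interior to $\Omega_{\mathcal{M}}^{\varepsilon}$ or that belong to $\mathcal{B_{L}^{\varepsilon}}\cup\mathcal{B_{R}^{\varepsilon}}$ (which only reduces the left-hand side) yields the stated inequality after taking square roots and applying $\sqrt{a+b}\leq\sqrt{a}+\sqrt{b}$.

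I do not anticipate a genuine obstacle here. The heart of the argument is that a single trace constant $C_{Z}$ works uniformly in $k$ and $\varepsilon$, because all cells $Z_{k}^{\varepsilon}$ are rigid translates of a common $\varepsilon$-dilation of the same fixed Lipschitz set; the trace inequality on $Z$ is then invariant under both translation and isotropic dilation up to the tabulated powers of $\varepsilon$. The only points requiring care are the correct two-dimensional surface-measure scaling $d\sigma_{x}=\varepsilon\,d\sigma_{y}$ under $x=\varepsilon y$, and the verification that $\Gamma_{0}^{\varepsilon}\subset\bigcup_{k}\partial Z_{k}^{\varepsilon}\setminus(\mathcal{B_{L}^{\varepsilon}}\cup\mathcal{B_{R}^{\varepsilon}})$, both of which follow directly from the definitions in Section \ref{sop}.
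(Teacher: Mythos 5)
The paper does not actually prove Lemma \ref{LN1}; it simply cites Lemma 3.1 of \cite{effectiveapratim}. Your argument supplies the standard proof that underlies that cited result: decompose $\Omega_{\mathcal{M}}^{\varepsilon}$ into translates of the $\varepsilon$-dilated reference cell, apply the fixed trace inequality on $Z$ (valid since $Z=Y\setminus Y_{0}$ is Lipschitz and $\partial Y\cap Y_{0}=\emptyset$), and rescale. Your bookkeeping of the powers of $\varepsilon$ in two dimensions is correct: $d\sigma_{x}=\varepsilon\,d\sigma_{y}$, $dx=\varepsilon^{2}dy$, and $\nabla_{x}=\varepsilon^{-1}\nabla_{y}$ give exactly the per-cell bound $\|v_{m}^{\varepsilon}\|_{L^{2}(\partial Z_{k}^{\varepsilon})}^{2}\leq C(\varepsilon^{-1}\|v_{m}^{\varepsilon}\|_{L^{2}(Z_{k}^{\varepsilon})}^{2}+\varepsilon\|\nabla v_{m}^{\varepsilon}\|_{L^{2}(Z_{k}^{\varepsilon})}^{2})$ with a constant uniform in $k$ and $\varepsilon$, and summing plus $\sqrt{a+b}\leq\sqrt{a}+\sqrt{b}$ finishes the proof. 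Your caveat restricting to $\kappa(\varepsilon)=\varepsilon$ is well taken: for the finitely thin layer ($\kappa(\varepsilon)=\kappa$ constant) the cells $\kappa[a_{1},b_{1}]\times\varepsilon[a_{2},b_{2}]$ are anisotropic dilations of a fixed set, the isotropic rescaling no longer yields a uniform trace constant, and the inequality with these particular exponents is genuinely the thin-layer statement; the paper glosses over this distinction when it invokes the lemma in the a priori estimates. Your proof is correct and, modulo being written out rather than outsourced to a reference, is essentially the intended argument.
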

\begin{proof}
For proof we refer Lemma 3.1 of  \cite{effectiveapratim}.
\end{proof}
	\begin{theorem}\label{T3}
	The weak solution $v^{\varepsilon}$ to the problem \eqref{wf}  satisfies the following energy estimates with $C>0$
	
		\begin{equation}\label{e1}
		\|v_{l}^{\varepsilon}\|_{L^{2}(\Omega_{\mathcal{L}}^{\varepsilon})}^{2}+			\|v_{r}^{\varepsilon}\|_{L^{2}(\Omega_{\mathcal{R}}^{\varepsilon})}^{2}+\varepsilon^{\alpha}\|v_{m}^{\varepsilon}\|_{L^{2}(\Omega_{\mathcal{M}}^{\varepsilon})}^{2}\leq C,
		\end{equation}
		\begin{equation}\label{e2}
		\|\nabla v_{l}^{\varepsilon}\|_{L^{2}(0,T;{L^{2}(\Omega_{\mathcal{L}}^{\varepsilon})})}^{2}+	\|\nabla v_{r}^{\varepsilon}\|_{L^{2}(0,T;{L^{2}(\Omega_{\mathcal{R}}^{\varepsilon})})}^{2}+\varepsilon^{\beta}\|\nabla v_{m}^{\varepsilon}\|_{L^{2}(0,T;{L^{2}(\Omega_{\mathcal{M}}^{\varepsilon})})}^{2}\leq C,
		\end{equation}
			\begin{equation}\label{e4}
		\|P_{\delta}(v_{l}^{\varepsilon}-u_{b})\|_{L^{2}(\Omega_{\mathcal{L}}^{\varepsilon})}^{2}+			\|P_{\delta}(v_{r}^{\varepsilon}-u_{b})\|_{L^{2}(\Omega_{\mathcal{R}}^{\varepsilon})}^{2}+\varepsilon^{\gamma}\|P_{\delta}(v_{m}^{\varepsilon}-u_{b})\|_{L^{2}(\Omega_{\mathcal{M}}^{\varepsilon})}^{2}\leq C,
		\end{equation}
\begin{equation}\label{e3}
	\|\partial_{t}v_{l}^{\varepsilon}\|_{L^{2}(0,T;L^{2}(\Omega_{\mathcal{L}}^{\varepsilon}))}^{2}+\|\partial_{t}v_{r}^{\varepsilon}\|_{L^{2}(0,T;L^{2}(\Omega_{\mathcal{R}}^{\varepsilon}))}^{2}+\varepsilon^{\alpha}\|\partial_{t}v_{m}^{\varepsilon}\|_{L^{2}(0,T;L^{2}(\Omega_{\mathcal{M}}^{\varepsilon}))}^{2}
	\leq C.
		\end{equation}

	\end{theorem}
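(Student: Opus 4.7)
The natural plan is to test the weak formulation \eqref{wf} with $\phi = v^{\varepsilon}$, derive a Gronwall-type inequality that yields the three sums in \eqref{e1}--\eqref{e2}, and then repeat the argument on the time-differentiated equation to obtain \eqref{e3}. Estimate \eqref{e4} comes almost for free: since $P$ has compact support in $[0,1]$, the mollified drift $P_{\delta}=\rho_{\delta}*P$ satisfies $\|P_{\delta}\|_{L^{\infty}(\mathbb{R})}\leq C$ uniformly in $\delta$; combined with the finite measure of $\Omega_{\mathcal{L}}^{\varepsilon},\Omega_{\mathcal{R}}^{\varepsilon},\Omega_{\mathcal{M}}^{\varepsilon}$ and the $\varepsilon^{\gamma}$ prefactor, \eqref{e4} follows directly.

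For \eqref{e1} and \eqref{e2}, choosing $\phi=v^{\varepsilon}$ in \eqref{wf} produces $\tfrac{1}{2}\tfrac{d}{dt}$ of the quantity in \eqref{e1} plus, after ellipticity \ref{assump1}, a coercive gradient contribution $\theta(\|\nabla v_{l}^{\varepsilon}\|^{2}+\|\nabla v_{r}^{\varepsilon}\|^{2}+\varepsilon^{\beta}\|\nabla v_{m}^{\varepsilon}\|^{2})$. The bulk sources, the Neumann data on $\Gamma_{h}^{\varepsilon}$, and the correction integrals on $\mathcal{B}_{\mathcal{L}}^{\varepsilon}\cup\mathcal{B}_{\mathcal{R}}^{\varepsilon}$ are handled by Cauchy--Schwarz, the trace bounds of Lemmas~\ref{L1}--\ref{L2}, and Young's inequality. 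For the drift, boundedness of $P_{\delta}$ together with $\|B\|_{L^{\infty}}\leq C$ and a Young inequality with weight $\varepsilon^{\beta}$ produces a residual of size $\varepsilon^{2\gamma-\beta}$, which is bounded thanks to $\gamma\geq\beta\geq 0$ in \ref{assump6}. The delicate terms are the two $\Gamma_{0}^{\varepsilon}$-integrals carrying the prefactors $\varepsilon^{\xi}$ and $\varepsilon^{\beta}$: by Lemma~\ref{LN1} they split into $\varepsilon^{-1/2}\|v_{m}^{\varepsilon}\|_{L^{2}(\Omega_{\mathcal{M}}^{\varepsilon})}$ and $\varepsilon^{1/2}\|\nabla v_{m}^{\varepsilon}\|_{L^{2}(\Omega_{\mathcal{M}}^{\varepsilon})}$ pieces; a further Young's inequality then absorbs them into $\varepsilon^{\alpha}\|v_{m}^{\varepsilon}\|^{2}$ and $\varepsilon^{\beta}\|\nabla v_{m}^{\varepsilon}\|^{2}$ exactly when the scaling conditions $\alpha+\tfrac{1}{2}\leq\xi$, $\alpha+\tfrac{1}{2}\leq\beta$, and $\beta\leq\xi+\tfrac{1}{2}$ of \ref{assump6} hold, combined with the data bounds $\varepsilon^{\xi-1/2}\|g_{0}^{\varepsilon}\|^{2}\leq C$ and $\varepsilon^{\beta-1/2}\|g_{b_{0}}^{\varepsilon}\|^{2}\leq C$ from \ref{assump4}. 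Gronwall's lemma then delivers \eqref{e1} pointwise in $t$ and, after integration in time, \eqref{e2}.

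For \eqref{e3} I would formally differentiate \eqref{wf} in time and test with $\partial_{t}v^{\varepsilon}$; this produces the squared time derivatives on the left, together with $\tfrac{d}{dt}$ of a coercive gradient quadratic form. Because mollification makes $P_{\delta}\in C^{\infty}$ with $P_{\delta}'$ bounded, the differentiated drift terms have the same structure as before, with $\partial_{t}v^{\varepsilon}-\partial_{t}u_{b}$ replacing $v^{\varepsilon}-u_{b}$, and are absorbed by the same Young-type estimates. The differentiated sources and boundary data are controlled through the $\partial_{t}$-regularity in \ref{assump3}, \ref{assump4}, \ref{assump7}, while $\partial_{t}v^{\varepsilon}(0)$ is recovered by reading \eqref{wf} at $t=0$ and using the $H^{1}$-regularity of the initial data from \ref{assump5}. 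A second Gronwall argument closes \eqref{e3}.

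The main obstacle is the bookkeeping of $\varepsilon$-powers: the six scaling inequalities in \ref{assump6} are tuned precisely so that every oscillating-boundary and layer-drift term can be absorbed into the coercive left-hand side despite the fact that the trace constant in Lemma~\ref{LN1} degenerates like $\varepsilon^{-1/2}$. Any relaxation of those inequalities would leave a genuinely $\varepsilon$-singular residual, so each term must be checked individually rather than by a single worst-case bound.
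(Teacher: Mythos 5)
Your treatment of \eqref{e1}, \eqref{e2}, and \eqref{e4} is essentially sound. For \eqref{e1}--\eqref{e2} you follow the same route as the paper (test with $v^{\varepsilon}$, ellipticity, Lemma~\ref{LN1} plus the scaling inequalities of \ref{assump6} to absorb the $\Gamma_{0}^{\varepsilon}$-terms, Gronwall, then integrate in time). For \eqref{e4} you argue via the uniform $L^{\infty}$-bound on $P_{\delta}$ and the measure of the domains, whereas the paper uses the Lipschitz continuity of $P_{\delta}$ (Mean Value Theorem around a zero $k$ of $P_{\delta}$) together with \eqref{e1}; both arguments close, and yours has the mild advantage of not needing \eqref{e1} at all, while reaching the same range $\gamma\geq -1$ noted in the paper's remark.

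The genuine problem is your route to \eqref{e3}. You propose to differentiate \eqref{wf} in time and test with $\partial_{t}v^{\varepsilon}$. That scheme yields a Gronwall inequality for $\|\partial_{t}v^{\varepsilon}(t)\|_{L^{2}}^{2}$ whose closure requires a bound on $\|\partial_{t}v^{\varepsilon}(0)\|_{L^{2}}$. You claim this is ``recovered by reading \eqref{wf} at $t=0$'' from the $H^{1}$-regularity of the initial data, but reading the equation at $t=0$ gives $\partial_{t}v^{\varepsilon}(0)=-\mathrm{div}\bigl(-D\nabla h_{b}^{\varepsilon}+BP_{\delta}(h_{b}^{\varepsilon}-u_{b}(0))\bigr)+f(0)$, and placing this in $L^{2}$ needs $\mathrm{div}(D\nabla h_{b}^{\varepsilon})\in L^{2}$, i.e.\ essentially $H^{2}$ initial data plus compatibility at the boundary --- which assumption \ref{assump5} does not provide (it only gives $h_{b}^{\varepsilon}\in H^{1}$, and only an $\varepsilon$-uniform bound on the $L^{2}$-norms). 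The paper avoids this entirely: it does \emph{not} differentiate the equation, but tests the (undifferentiated) Galerkin system with $\partial_{t}v_{n}^{\varepsilon}$, rewrites $\int D\nabla v_{n}^{\varepsilon}\cdot\nabla\partial_{t}v_{n}^{\varepsilon}=\tfrac12\tfrac{d}{dt}\int D\nabla v_{n}^{\varepsilon}\cdot\nabla v_{n}^{\varepsilon}$ (and integrates the drift and boundary terms by parts in time), so that after integrating over $(0,t)$ the only initial-time contribution is $\tfrac12\int D\nabla v_{n}^{\varepsilon}(0)\cdot\nabla v_{n}^{\varepsilon}(0)$ and similar first-order quantities, which are controlled by exactly the $H^{1}$-regularity in \ref{assump5}. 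Since the claimed estimate \eqref{e3} is only $\|\partial_{t}v^{\varepsilon}\|_{L^{2}(0,T;L^{2})}^{2}\leq C$ (not $L^{\infty}(0,T;L^{2})$), the weaker, undifferentiated argument suffices and is the one you should use; as written, your step for \eqref{e3} does not go through under the stated hypotheses.
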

\begin{proof}
	We prove the estimate \eqref{e1} by choosing the test function $\phi =(v_{l}^{\varepsilon},v_{m}^{\varepsilon},v_{r}^{\varepsilon})$ in the weak formulation \eqref{wf}, we get
		\begin{multline}\label{wf1}
		 \int_{\Omega_{\mathcal{L}}^{\varepsilon}} \partial_{t} v_{l}^{\varepsilon} v_{l}^{\varepsilon}dx+
	 \int_{\Omega_{\mathcal{R}}^{\varepsilon}} \partial_{t} v_{r}^{\varepsilon} v_{r}^{\varepsilon}dx+
		\varepsilon^{\alpha}  \int_{\Omega_{\mathcal{M}}^{\varepsilon}} \partial_{t}  v_{m}^{\varepsilon}\phi_{2}dx+\\
	\int_{\Omega_{\mathcal{L}}^{\varepsilon}}D_{L}\nabla v_{l} ^{\varepsilon}\nabla v_{l}^{\varepsilon} dx+ \int_{\Omega_{\mathcal{R}}^{\varepsilon}}D_{R}\nabla v_{r} ^{\varepsilon}\nabla v_{r}^{\varepsilon} dx+ \varepsilon^{\beta}\int_{\Omega_{\mathcal{M}}^{\varepsilon}}D_{M}^{\varepsilon}\nabla v_{m} ^{\varepsilon}\nabla v_{m}^{\varepsilon} dx\\
	-\int_{\Omega_{\mathcal{L}}^{\varepsilon}}B_{L}P_{\delta}(v_{l}^{\varepsilon}-u_{b})\nabla v_{l}^{\varepsilon} dx
	-\int_{\Omega_{\mathcal{R}}^{\varepsilon}}B_{R}P_{\delta}(v_{r}^{\varepsilon}-u_{b})\nabla v_{r}^{\varepsilon} dx\\
	-\varepsilon^{\gamma}\int_{\Omega_{\mathcal{M}}^{\varepsilon}}B_{M}^{\varepsilon}P_{\delta}(v_{m}^{\varepsilon}-u_{b})\nabla v_{m}^{\varepsilon} dx\\
	=\int_{\Omega_{\mathcal{L}}^{\varepsilon}}f_{b_{l}}v_{l}^{\varepsilon} dx  +\int_{\Omega_{\mathcal{R}}^{\varepsilon}}f_{b_{r}}v_{r}^{\varepsilon}dx +\varepsilon^{\alpha}\int_{\Omega_{\mathcal{M}}^{\varepsilon}}f_{a_{m}}^{\varepsilon}v_{m}^{\varepsilon} dx +\varepsilon^{\beta}\int_{\Omega_{\mathcal{M}}^{\varepsilon}}f_{b_{m}}^{\varepsilon}v_{m}^{\varepsilon}dx\\
	-\int_{\Gamma_{h}^{\varepsilon}\cap \partial\Omega_{\mathcal{L}}^{\varepsilon}}g_{b_{l}}v_{l}^{\varepsilon} d\sigma-\int_{\Gamma_{h}^{\varepsilon}\cap \partial\Omega_{\mathcal{R}}^{\varepsilon}}g_{b_{r}} v_{r}^{\varepsilon} d\sigma-\varepsilon^{\xi}\int_{\Gamma_{0}^{\varepsilon}}g_{0}^{\varepsilon} v_{m}^{\varepsilon} d\sigma-\varepsilon^{\beta}\int_{\Gamma_{0}^{\varepsilon}}g_{b_{0}}^{\varepsilon} v_{m}^{\varepsilon} d\sigma\\
	+\int_{\mathcal{B_{L}^{\varepsilon}}}(D_{L}-\varepsilon^{\beta}D_{M})\nabla u_{b}\cdot n_{l}v_{l}^{\varepsilon}d\sigma+\int_{\mathcal{B_{R}^{\varepsilon}}}(D_{R}-\varepsilon^{\beta}D_{M})\nabla u_{b}\cdot n_{r}v_{r}^{\varepsilon}d\sigma .
	\end{multline}
	Using  \eqref{pd}-\eqref{pd2} and \eqref{a22}, we obtain
	\begin{align}
	 \int_{\Omega_{\mathcal{L}}^{\varepsilon}}B_{L}P_{\delta}(v_{l}^{\varepsilon}-u_{b})\nabla v_{l}^{\varepsilon}dx\leq C	\int_{\Omega_{\mathcal{L}}^{\varepsilon}}|\nabla v_{l}^{\varepsilon}| dx \label{bes1},\\
	\int_{\Omega_{\mathcal{R}}^{\varepsilon}}B_{R}P_{\delta}(v_{r}^{\varepsilon}-u_{b})\nabla v_{r}^{\varepsilon}dx\leq C	\int_{\Omega_{\mathcal{R}}^{\varepsilon}}|\nabla v_{r}^{\varepsilon} |dx\label{bes2},\\
	\int_{\Omega_{\mathcal{M}}^{\varepsilon}}B_{M}^{\varepsilon}P_{\delta}(v_{m}^{\varepsilon}-u_{b})\nabla v_{m}^{\varepsilon}dx\leq C	\int_{\Omega_{\mathcal{M}}^{\varepsilon}}|\nabla v_{m}^{\varepsilon} |dx\label{bes3},
	\end{align}
	where the constant $C$ in \eqref{bes1} depends on $\|B_{L}\|_{L^{\infty}(\Omega_{\mathcal{L}}^{\varepsilon};\mathbb{R}^{2})}$ and $\|P_{\delta}(u)\|_{L^{\infty}(\mathbb{R})}$, similarly $C$ in \eqref{bes2} depend on $\|B_{L}\|_{L^{\infty}(\Omega_{\mathcal{R}}^{\varepsilon};\mathbb{R}^{2})}$ and $\|P_{\delta}(u)\|_{L^{\infty}(\mathbb{R})}$, $C$ in \eqref{bes3} depends on $\|B_{L}\|_{L^{\infty}(\Omega_{\mathcal{M}}^{\varepsilon};\mathbb{R}^{2})}$ and $\|P_{\delta}(u)\|_{L^{\infty}(\mathbb{R})}$.\\
Now using \eqref{el} together with the estimates \eqref{bes1}-\eqref{bes3}, we obtain
\begin{multline}\label{ee0}
\frac{1}{2}\frac{d}{dt}\|v_{l}^{\varepsilon}\|_{L^{2}(\Omega_{\mathcal{L}}^{\varepsilon})}^{2}+\frac{1}{2}\frac{d}{dt}\|v_{r}^{\varepsilon}\|_{L^{2}(\Omega_{\mathcal{R}}^{\varepsilon})}^{2}+\varepsilon^{\alpha}\frac{1}{2}\frac{d}{dt}\|v_{m}^{\varepsilon}\|_{L^{2}(\Omega_{\mathcal{M}}^{\varepsilon})}^{2}\\
+\theta \int_{\Omega_{\mathcal{L}}^{\varepsilon}}|\nabla v_{l}^{\varepsilon}|^{2}dx+\theta \int_{\Omega_{\mathcal{R}}^{\varepsilon}}|\nabla v_{r}^{\varepsilon}|^{2}dx+\varepsilon^{\beta}\theta \int_{\Omega_{\mathcal{M}}^{\varepsilon}}|\nabla v_{m}^{\varepsilon}|^{2}dx\\
\leq C\int_{\Omega_{\mathcal{L}}^{\varepsilon}}|\nabla v_{l}^{\varepsilon}|dx+C\int_{\Omega_{\mathcal{R}}^{\varepsilon}}|\nabla v_{r}^{\varepsilon}|dx+C\varepsilon^{\gamma}\int_{\Omega_{\mathcal{M}}^{\varepsilon}}|\nabla v_{m}^{\varepsilon}|dx\\
+\int_{\Omega_{\mathcal{L}}^{\varepsilon}}|f_{b_{l}}v_{l}^{\varepsilon}|dx+\int_{\Omega_{\mathcal{R}}^{\varepsilon}}|f_{b_{r}}v_{r}^{\varepsilon}|dx+\varepsilon^{\alpha}\int_{\Omega_{\mathcal{M}}^{\varepsilon}}|f_{a_{m}} v_{m}^{\varepsilon}| dx+\varepsilon^{\beta}\int_{\Omega_{\mathcal{M}}^{\varepsilon}}|f_{b_{m}}^{\varepsilon} v_{m}^{\varepsilon}| dx\\
+\int_{\Gamma_{h}^{\varepsilon}\cap \partial\Omega_{\mathcal{L}}^{\varepsilon}}|g_{b_{l}}||v_{l}^{\varepsilon}| d\sigma+\int_{\Gamma_{h}^{\varepsilon}\cap \partial\Omega_{\mathcal{R}}^{\varepsilon}}|g_{b_{r}} ||v_{r}^{\varepsilon}| d\sigma+\varepsilon^{\xi}\int_{\Gamma_{0}^{\varepsilon}}|g_{0}^{\varepsilon} || v_{m}^{\varepsilon}| d\sigma+\varepsilon^{\beta}\int_{\Gamma_{0}^{\varepsilon}}|g_{b_{0}}^{\varepsilon} || v_{m}^{\varepsilon}| d\sigma\\
+ \int_{\mathcal{B_{L}^{\varepsilon}}}|(D_{L}-\varepsilon^{\beta}D_{M})\nabla u_{b}| |v_{l}^{\varepsilon}|d\sigma+\int_{\mathcal{B_{R}^{\varepsilon}}}|(D_{R}-\varepsilon^{\beta}D_{M})\nabla u_{b}||v_{r}^{\varepsilon}|d\sigma.
\end{multline}

Now, by applying Young's Inequality and using the assumption \ref{assump6}, for any $\zeta>0$ we have
\begin{equation}\label{ee1}
\int_{\Omega_{\mathcal{L}}^{\varepsilon}}|\nabla v_{l}^{\varepsilon}|dx\leq \zeta \int_{\Omega_{\mathcal{L}}^{\varepsilon}}|\nabla v_{l}^{\varepsilon}|^{2}dx+ C(\zeta),
\end{equation}
\begin{equation}\label{ee1x}
\int_{\Omega_{\mathcal{R}}^{\varepsilon}}|\nabla v_{r}^{\varepsilon}|dx\leq \zeta \int_{\Omega_{\mathcal{R}}^{\varepsilon}}|\nabla v_{r}^{\varepsilon}|^{2}dx+ C(\zeta),
\end{equation}
\begin{equation}\label{ee2}
\begin{aligned}
\varepsilon^{\gamma}\int_{\Omega_{\mathcal{M}}^
{\varepsilon}}|\nabla v_{m}^{\varepsilon}|dx&\leq \varepsilon^{\gamma}\zeta \int_{\Omega_{\mathcal{M}}^{\varepsilon}}|\nabla v_{m}^{\varepsilon}|^{2}dx+ \varepsilon^{\gamma} C(\zeta)|\Omega_{\mathcal{M}}^{\varepsilon}|^{\frac{1}{2}}\\
&\leq \varepsilon^{\gamma}\zeta \int_{\Omega_{\mathcal{M}}^{\varepsilon}}|\nabla v_{m}^{\varepsilon}|^{2}dx+\varepsilon^{\gamma+\frac{1}{2}}\sqrt{2h}C(\zeta)\\
&\leq  \varepsilon^{\gamma}\zeta \int_{\Omega_{\mathcal{M}}^{\varepsilon}}|\nabla v_{m}^{\varepsilon}|^{2}dx+C(\zeta),
\end{aligned}
\end{equation}
\begin{equation}\label{ee3}
\int_{\Omega_{\mathcal{L}}^{\varepsilon}}|f_{b_{l}}v_{l}^{\varepsilon}|dx\leq\frac{1}{2} \int_{\Omega_{\mathcal{L}}^{\varepsilon}}|f_{b_{l}}|^{2}dx+\frac{1}{2}\int_{\Omega_{\mathcal{L}}^{\varepsilon}}{|v_{l}^{\varepsilon}}|^{2}dx,
\end{equation}
\begin{equation}\label{ee3x}
\int_{\Omega_{\mathcal{R}}^{\varepsilon}}|f_{b_{r}}v_{r}^{\varepsilon}|dx\leq\frac{1}{2} \int_{\Omega_{\mathcal{R}}^{\varepsilon}}|f_{b_{r}}|^{2}dx+\frac{1}{2}\int_{\Omega_{\mathcal{R}}^{\varepsilon}}{|v_{r}^{\varepsilon}}|^{2}dx,
\end{equation}
\begin{equation}\label{nee4}
\varepsilon^{\alpha}\int_{\Omega_{\mathcal{M}}^{\varepsilon}}|f_{a_{m}}v_{m}^{\varepsilon}|dx\leq \frac{1}{2}\varepsilon^{\alpha}\int_{\Omega_{\mathcal{M}}^{\varepsilon}}|f_{a_{m}}|^{2}dx+\frac{1}{2}\varepsilon^{\alpha}\int_{\Omega_{\mathcal{M}}}{|v_{m}^{\varepsilon}}|^{2}dx,
\end{equation}
\begin{equation}\label{ee4}
\begin{aligned}
\varepsilon^{\beta}\int_{\Omega_{\mathcal{M}}^{\varepsilon}}|f_{b_{m}}^{\varepsilon}v_{m}^{\varepsilon}|dx&\leq\frac{1}{2} \varepsilon^{\beta}\int_{\Omega_{\mathcal{M}}^{\varepsilon}}|f_{b_{m}}^{\varepsilon}|^{2}dx+\frac{1}{2}\varepsilon^{\beta}\int_{\Omega_{\mathcal{M}}^{\varepsilon}}{|v_{m}^{\varepsilon}}|^{2}dx,\\
&\leq\frac{1}{2}\varepsilon^{\beta}\int_{\Omega_{\mathcal{M}}^{\varepsilon}}|f_{b_{m}}^{\varepsilon}|^{2}dx +\frac{1}{2}\varepsilon^{\alpha}\int_{\Omega_{\mathcal{M}}^{\varepsilon}}{|v_{m}^{\varepsilon}}|^{2}dx.
\end{aligned}
\end{equation}
Using Cauchy Schwarz's inequality, \eqref{l1} and Young's Inequality, we have
\begin{equation}\label{ee5}
\begin{aligned}
\int_{\Gamma_{h}^\varepsilon\cap \partial\Omega_{\mathcal{L}}^{\varepsilon}}|g_{b_{l}}||v_{l}^{\varepsilon}| d\sigma &\leq \|g_{b_{l}}\|_{L^{2}(\Gamma_{h}^{\varepsilon}\cap \partial\Omega_{\mathcal{L}}^{\varepsilon})}\|v_{l}^{\varepsilon}\|_{L^{2}(\Gamma_{h}^{\varepsilon}\cap \partial\Omega_{\mathcal{L}}^{\varepsilon})}\\
&\leq C \|g_{b_{l}}\|_{L^{2}(\Gamma_{h}^{\varepsilon}\cap \partial\Omega_{\mathcal{L}}^{\varepsilon})}\|\nabla v_{l}^{\varepsilon}\|_{L^{2}(\Omega_{\mathcal{L}}^{\varepsilon})}\\
&\leq C(\zeta) \|g_{b_{l}}\|_{L^{2}(\Gamma_{h}^{\varepsilon}\cap \partial\Omega_{\mathcal{L}}^{\varepsilon})}^{2}+C\zeta \|\nabla v_{l}^{\varepsilon}\|_{L^{2}(\Omega_{\mathcal{L}}^{\varepsilon})}^{2}.
\end{aligned}
\end{equation}
 By similar argument as for \eqref{ee5}, we get
\begin{equation}\label{}
\int_{\Gamma_{h}^{\varepsilon}\cap \partial\Omega_{\mathcal{R}}^{\varepsilon}}|g_{b_{r}}||v_{r}^{\varepsilon}| d\sigma \leq C(\zeta) \|g_{b_{r}}\|_{L^{2}(\Gamma_{h}^{\varepsilon}\cap \partial\Omega_{\mathcal{R}}^{\varepsilon})}^{2}+C\zeta \|\nabla v_{r}^{\varepsilon}\|_{L^{2}(\Omega_{\mathcal{R}}^{\varepsilon})}^{2}.
\end{equation}
Using Cauchy Schwarz's inequality, Lemma \ref{LN1}, Young's inequality and \ref{assump6}, we get
\begin{equation}\label{nee6}
\begin{aligned}
\varepsilon^{\beta}\int_{\Gamma_{0}^{\varepsilon}}|g_{b_{0}}^{\varepsilon} || v_{m}^{\varepsilon}| d\sigma&\leq \varepsilon^{\beta} \|g_{b_{0}}^{\varepsilon}\|_{L^{2}(\Gamma_{0}^{\varepsilon})}\|v_{m}^{\varepsilon}\|_{L^{2}(\Gamma_{0}^{\varepsilon})}\\
&\leq C\varepsilon^{\beta-\frac{1}{2}}\|g_{b_{0}}^{\varepsilon}\|_{L^{2}(\Gamma_{0}^{\varepsilon})}\| v_{m}^{\varepsilon}\|_{L^{2}(\Omega_{\mathcal{M}}^{\varepsilon})}\\
&\hspace{1cm}+C\varepsilon^{\beta+\frac{1}{2}}\|g_{b_{0}}^{\varepsilon}\|_{L^{2}(\Gamma_{0}^{\varepsilon})}\|\nabla v_{m}^{\varepsilon}\|_{L^{2}(\Omega_{\mathcal{M}}^{\varepsilon})}\\
&\leq C\varepsilon^{\beta-\frac{1}{2}}\|g_{b_{0}}^{\varepsilon}\|_{L^{2}(\Gamma_{0}^{\varepsilon})}^{2}\\
&\hspace{2cm}+ \varepsilon^{\beta-\frac{1}{2}}C\| v_{m}^{\varepsilon}\|_{L^{2}(\Omega_{\mathcal{M}}^{\varepsilon})}^{2}+C(\zeta)\varepsilon^{\beta+\frac{1}{2}}\|g_{b_{0}}^{\varepsilon}\|_{L^{2}(\Gamma_{0}^{\varepsilon})}^{2}\\
&\hspace{3cm}+\zeta\varepsilon^{\beta+\frac{1}{2}} \|\nabla v_{m}^{\varepsilon}\|_{L^{2}(\Omega_{\mathcal{M}}^{\varepsilon})}^{2} \\
& \leq C\varepsilon^{\beta-\frac{1}{2}}\|g_{b_{0}}^{\varepsilon}\|_{L^{2}(\Gamma_{0}^{{\varepsilon}})}^{2}+ \varepsilon^{\alpha}C\| v_{m}^{\varepsilon}\|_{L^{2}(\Omega_{\mathcal{M}}^{\varepsilon})}^{2}\\
&\hspace{1cm}+C(\zeta)\varepsilon^{\beta+\frac{1}{2}}\|g_{b_{0}}^{\varepsilon}\|_{L^{2}(\Gamma_{0}^{\varepsilon})}^{2}+\zeta\varepsilon^{\beta} \|\nabla v_{m}^{\varepsilon}\|_{L^{2}(\Omega_{\mathcal{M}}^{\varepsilon})}^{2},
\end{aligned}
\end{equation}

similarly,
\begin{equation}\label{ee6}
\begin{aligned}
\varepsilon^{\xi}\int_{\Gamma_{0}^{\varepsilon}}|g_{0}^{\varepsilon} || v_{m}^{\varepsilon}| d\sigma&\leq \varepsilon^{\xi-\frac{1}{2}}C\|g_{0}^{\varepsilon}\|_{L^{2}(\Gamma_{0}^{\varepsilon})}^{2}+\varepsilon^{\alpha}C\| v_{m}^{\varepsilon}\|_{L^{2}(\Omega_{\mathcal{M}}^{\varepsilon})}^{2}\\
&\hspace{2cm}+C(\zeta)\varepsilon^{\xi+\frac{1}{2}}\|g_{0}^{\varepsilon}\|_{L^{2}(\Gamma_{0}^{\varepsilon})}^{2}+\zeta\varepsilon^{\beta} \|\nabla v_{m}^{\varepsilon}\|_{L^{2}(\Omega_{\mathcal{M}}^{\varepsilon})}^{2}
\end{aligned}
\end{equation}
Using \eqref{dinf}, \ref{assump6}, Cauchy Schwarz's inequality, the trace inequality and Young's Inequality, we have
\begin{equation}\label{ee6n}
\begin{aligned}
 \int_{\mathcal{B_{L}^{\varepsilon}}}|(D_{L}-\varepsilon^{\beta}D_{M})\nabla u_{b}| |v_{l}^{\varepsilon}|d\sigma&\leq C \int_{\mathcal{B_{L}^{\varepsilon}}}|\nabla u_{b}| |v_{l}^{\varepsilon}|d\sigma\\
 &\leq C\|\nabla u_{b}\|_{L^{2}(\mathcal{B_{L}})}\|v_{l}^{\varepsilon}\|_{L^{2}(\mathcal{B_{L}})}\\
 &\leq C\|\nabla u_{b}\|_{L^{2}(\mathcal{B_{L}})}\|\nabla v_{l}^{\varepsilon}\|_{L^{2}(\Omega_{\mathcal{L}}^{\varepsilon})}\\
 &\leq C(\zeta)+\zeta\|\nabla v_{l}^{\varepsilon}\|_{L^{2}({\Omega_{\mathcal{L}}^{\varepsilon}})}^{2},
\end{aligned}
\end{equation}

similarly,
\begin{equation}\label{ee6x}
\int_{\mathcal{B_{R}^{\varepsilon}}}|(D_{R}-\varepsilon^{\beta}D_{M})\nabla u_{b}||v_{r}^{\varepsilon}|d\sigma\leq C(\zeta)+\zeta\|\nabla v_{r}^{\varepsilon}\|_{L^{2}({\Omega_{\mathcal{R}}^{\varepsilon}})}^{2}.
\end{equation}

Now, we use assumption \ref{assump6}, \eqref{ee1}-\eqref{ee6x} in \eqref{ee0}, and for $\zeta$ small enough, we control the gradient terms appearing on the right-hand side using the analogous nonnegative terms on the left-hand side, thus we get
\begin{multline}\label{ee8}
\frac{1}{2}\frac{d}{dt}\|v_{l}^{\varepsilon}\|_{L^{2}(\Omega_{\mathcal{L}}^{\varepsilon})}^{2}+\frac{1}{2}\frac{d}{dt}\|v_{r}^{\varepsilon}\|_{L^{2}(\Omega_{\mathcal{R}}^{\varepsilon})}^{2}+\varepsilon^{\alpha}\frac{1}{2}\frac{d}{dt}\|v_{m}^{\varepsilon}\|_{L^{2}(\Omega_{\mathcal{M}}^{\varepsilon})}^{2}\\
\leq C\left(1+\|f_{b_{l}}\|_{L^{2}(\Omega_{\mathcal{L}}^{\varepsilon})}^{2}+\|f_{b_{r}}\|_{L^{2}(\Omega_{\mathcal{R}}^{\varepsilon})}^{2}+\varepsilon ^{\alpha}\|f_{a_{m}}^{\varepsilon}\|_{L^{2}(\Omega_{\mathcal{M}}^{\varepsilon})}^{2}+\varepsilon ^{\beta}\|f_{b_{m}}^{\varepsilon}\|_{L^{2}(\Omega_{\mathcal{M}}^{\varepsilon})}^{2}\right.\\
+ \|g_{b_{l}}\|_{L^{2}(\Gamma_{h}^{\varepsilon}\cap \partial\Omega_{\mathcal{L}}^{\varepsilon})}^{2}+ \|g_{b_{r}}\|_{L^{2}(\Gamma_{h}^{\varepsilon}\cap \partial\Omega_{\mathcal{R}}^{\varepsilon})}^{2}+\varepsilon^{\beta-\frac{1}{2}}\|g_{b_{0}}^{\varepsilon}\|_{L^{2}(\Gamma_{0}^{\varepsilon})}^{2}+\varepsilon^{\xi-\frac{1}{2}}C\|g_{0}^{\varepsilon}\|_{L^{2}(\Gamma_{0}^{\varepsilon})}^{2}\\
\left. +\|v_{l}^{\varepsilon}\|_{L^{2}({\Omega_{\mathcal{L}}^{\varepsilon}})}^{2}+\|v_{r}^{\varepsilon}\|_{L^{2}({\Omega_{\mathcal{R}}^{\varepsilon}})}^{2}+\varepsilon^{\alpha} \| v_{m}^{\varepsilon}\|_{L^{2}(\Omega_{\mathcal{M}}^{\varepsilon})}^{2}\right).
\end{multline}
By using  \ref{assump3}, \ref{assump4} in \eqref{ee8} and using Gronwall's inequality with \ref{assump5}, we obtain
\begin{equation}\label{}
	\|v_{l}^{\varepsilon}\|_{L^{2}(\Omega_{\mathcal{L}}^{\varepsilon})}^{2}+			\|v_{r}^{\varepsilon}\|_{L^{2}(\Omega_{\mathcal{R}}^{\varepsilon})}^{2}+\varepsilon^{\alpha}\|v_{m}^{\varepsilon}\|_{L^{2}(\Omega_{\mathcal{M}}^{\varepsilon})}^{2}\leq C.
\end{equation}
Hence we have finished the proof of the estimate \eqref{e1}.
\par From \eqref{ee0}-\eqref{ee6x} and \ref{assump6}, we have

\begin{multline}\label{ee9}
\frac{1}{2}\frac{d}{dt}\|v_{l}^{\varepsilon}\|_{L^{2}(\Omega_{\mathcal{L}}^{\varepsilon})}^{2}+\frac{1}{2}\frac{d}{dt}\|v_{r}^{\varepsilon}\|_{L^{2}(\Omega_{\mathcal{R}}^{\varepsilon})}^{2}+\varepsilon^{\alpha}\frac{1}{2}\frac{d}{dt}\|v_{m}^{\varepsilon}\|_{L^{2}(\Omega_{\mathcal{M}}^{\varepsilon})}^{2}\\
+\theta \int_{\Omega_{\mathcal{L}}^{\varepsilon}}|\nabla v_{l}^{\varepsilon}|^{2}dx+\theta \int_{\Omega_{\mathcal{R}}^{\varepsilon}}|\nabla v_{r}^{\varepsilon}|^{2}dx+\varepsilon^{\beta}\theta \int_{\Omega_{\mathcal{M}}^{\varepsilon}}|\nabla v_{m}|^{2}dx\\
\leq C(C(\zeta)+\|f_{b_{l}}\|_{L^{2}(\Omega_{\mathcal{L}}^{\varepsilon})}^{2}+\|f_{b_{r}}\|_{L^{2}(\Omega_{\mathcal{R}}^{\varepsilon})}^{2}+\varepsilon ^{\alpha}\|f_{a_{m}}^{\varepsilon}\|_{L^{2}(\Omega_{\mathcal{M}}^{\varepsilon})}^{2}+\varepsilon ^{\beta}\|f_{b_{m}}^{\varepsilon}\|_{L^{2}(\Omega_{\mathcal{M}}^{\varepsilon})}^{2}\\
+C(\zeta) \|g_{b_{l}}\|_{L^{2}(\Gamma_{h}^{\varepsilon}\cap \partial\Omega_{\mathcal{L}})}^{2}+ C(\zeta) \|g_{b_{r}}\|_{L^{2}(\Gamma_{h}^{\varepsilon}\cap \partial\Omega_{\mathcal{R}})}^{2}+\varepsilon^{\beta-\frac{1}{2}}\|g_{b_{0}}^{\varepsilon}\|_{L^{2}(\Gamma_{0}^{\varepsilon})}^{2}\\
+C(\zeta)\varepsilon^{\beta+\frac{1}{2}}\|g_{b_{0}}\|_{L^{2}(\Gamma_{0}^{\varepsilon})}^{2}+\varepsilon^{\xi-\frac{1}{2}}\|g_{0}^{\varepsilon}\|_{L^{2}(\Gamma_{0}^{\varepsilon})}^{2}+C(\zeta)\varepsilon^{\xi+\frac{1}{2}}\|g_{0}^{\varepsilon}\|_{L^{2}(\Gamma_{0}^{\varepsilon})}^{2}\\
+	\|v_{l}^{\varepsilon}\|_{L^{2}(\Omega_{\mathcal{L}}^{\varepsilon})}^{2}+			\|v_{r}^{\varepsilon}\|_{L^{2}(\Omega_{\mathcal{R}}^{\varepsilon})}^{2}+\varepsilon^{\alpha}\|v_{m}^{\varepsilon}\|_{L^{2}(\Omega_{\mathcal{M}}^{\varepsilon})}^{2}\\
+\zeta \int_{\Omega_{\mathcal{L}}^{\varepsilon}}|\nabla v_{l}^{\varepsilon}|^{2}dx+\zeta \int_{\Omega_{\mathcal{R}}^{\varepsilon}}|\nabla v_{r}^{\varepsilon}|^{2}dx+\varepsilon^{\beta}\zeta \int_{\Omega_{\mathcal{M}}^{\varepsilon}}|\nabla v_{m}|^{2}dx).
\end{multline} 
Integrating \eqref{ee9}  from 0 to $T$ and using \eqref{e1} together with \ref{assump3}-\ref{assump6} and for $\zeta$ small enough, we obtain
\begin{equation}\label{}
	\|\nabla v_{l}^{\varepsilon}\|_{L^{2}(0,T;{L^{2}(\Omega_{\mathcal{L}}^{\varepsilon})})}^{2}+	\|\nabla v_{r}^{\varepsilon}\|_{L^{2}(0,T;{L^{2}(\Omega_{\mathcal{R}}^{\varepsilon})})}^{2}+\varepsilon^{\beta}\|\nabla v_{m}^{\varepsilon}\|_{L^{2}(0,T;{L^{2}(\Omega_{\mathcal{M}}^{\varepsilon})})}^{2}\leq C.
\end{equation}
Hence we proved estimate \eqref{e2}.
\par By using the structure of $P_{\delta}$ we know that there exists $k\in \mathbb{R}$ such that $P_{\delta}(k)=0$. Now using Mean Value Theorem and \eqref{e1} we have
\begin{equation}\label{t2l1}
\begin{aligned}
\|P_{\delta}(v_{l}^{\varepsilon}-u_{b})\|_{L^{2}(\Omega_{\mathcal{L}}^{\varepsilon})}^{2}&=	\|P_{\delta}(v_{l}^{\varepsilon}-u_{b})-P_{\delta}(k)\|_{L^{2}(\Omega_{\mathcal{L}}^{\varepsilon})}^{2}\\
&\leq C	\|v_{l}^{\varepsilon}-u_{b}-k\|_{L^{2}(\Omega_{\mathcal{L}}^{\varepsilon})}^{2}\\
&\leq C \left(\|v_{l}^{\varepsilon}\|_{L^{2}(\Omega_{\mathcal{L}}^{\varepsilon})}^{2}+\|u_{b}-k\|_{L^{2}(\Omega_{\mathcal{L}}^{\varepsilon})}^{2}\right).\\
&\leq C.
\end{aligned}
\end{equation}
Similarly we can prove
\begin{equation}\label{t2l2}
\|P_{\delta}(v_{r}^{\varepsilon}-u_{b})\|_{L^{2}(\Omega_{\mathcal{R}}^{\varepsilon})}^{2}\leq C.
\end{equation}
We have 

\begin{equation}\label{t2l3}
\begin{aligned}
\varepsilon^{\gamma}\|P_{\delta}(v_{m}^{\varepsilon}-u_{b})\|_{L^{2}(\Omega_{\mathcal{M}}^{\varepsilon})}^{2}&=\varepsilon^{\gamma}	\|P_{\delta}(v_{m}^{\varepsilon}-u_{b})-P_{\delta}(k)\|_{L^{2}(\Omega_{\mathcal{M}}^{\varepsilon})}^{2}\\
&\leq \varepsilon ^{\gamma}C	\|v_{m}^{\varepsilon}-u_{b}-k\|_{L^{2}(\Omega_{\mathcal{M}}^{\varepsilon})}^{2}\\
&\leq \varepsilon^{\gamma}C \left(\|v_{m}^{\varepsilon}\|_{L^{2}(\Omega_{\mathcal{M}}^{\varepsilon})}^{2}+\|u_{b}-k\|_{L^{2}(\Omega_{\mathcal{M}}^{\varepsilon})}^{2}\right).\\
&\leq C\varepsilon^{\alpha}\|v_{m}^{\varepsilon}\|_{L^{2}(\Omega_{\mathcal{M}}^{\varepsilon})}^{2}+C\varepsilon^{\gamma}|\Omega_{\mathcal{M}}^{\varepsilon}|\\
&\leq C+C\varepsilon^{\gamma}2\varepsilon h\\
&\leq C+C\varepsilon^{\gamma+1}\\
&\leq C,
\end{aligned}
\end{equation}
where we used  $\varepsilon^{\gamma}\leq \varepsilon^{\alpha}$ and  \eqref{e1}.
 Now combining \eqref{t2l1}, \eqref{t2l2} and \eqref{t2l3} we obtain \eqref{e4}.\\ \begin{remark}
 
  from \eqref{t2l3} we can see that we can relax the assumption on $\gamma$ stated in \ref{assump6} by $\gamma\in [-1,\infty)$ for the estimate \eqref{e4} 
\end{remark}

\par To prove the estimate \eqref{e3}, we approximate the weak solution of \eqref{wf} using Galerkin scheme and take $(\partial_{t}v_{l}^{\varepsilon},\partial_{t}v_{m}^{\varepsilon},\partial_{t}v_{r}^{\varepsilon})$ as test function.
\par Let $\{w_{l,k}^{\varepsilon}\}_{k=1}^{\infty}$  be an orthogonal basis of $H^{1}(\Omega_{\mathcal{L}}^{\varepsilon};\Gamma_{\mathcal{L}})$ and orthonormal basis of $L^{2}(\Omega_{\mathcal{L}}^{\varepsilon})$. Similarly, we define $\{w_{r,k}^{\varepsilon}\}_{k=1}^{\infty}$ and $\{w_{m,k}^{\varepsilon}\}_{k=1}^{\infty}$ as basis of $H^{1}(\Omega_{\mathcal{R}}^{\varepsilon};\Gamma_{\mathcal{R}})$ and $H^{1}(\Omega_{\mathcal{M}}^{\varepsilon})$ respectively. Then the Galerkin scheme problem will be to find $d_{l,1}^{\varepsilon},d_{l,2}^{\varepsilon},\cdots,d_{l,n}^{\varepsilon},d_{m,1}^{\varepsilon},d_{m,2}^{\varepsilon}$\\$,\cdots,d_{m,n}^{\varepsilon},d_{r,1}^{\varepsilon},d_{r,2}^{\varepsilon},\cdots,d_{r,n}^{\varepsilon}:[0,1]\rightarrow \mathbb{R}$ such that,
\begin{equation}\label{ge1}
\begin{aligned}
v_{l,n}^{\varepsilon}&=\sum_{k=0}^{n}d_{l,k}^{\varepsilon}w_{l,k}^{\varepsilon}\\
v_{m,n}^{\varepsilon}&=\sum_{k=0}^{n}d_{m,k}^{\varepsilon}w_{m,k}^{\varepsilon}\\
v_{r,n}^{\varepsilon}&=\sum_{k=0}^{n}d_{r,k}^{\varepsilon}w_{r,k}^{\varepsilon}
\end{aligned}
\end{equation}
satisfying the weak formulation corresponding to \eqref{wf} with the initial condition \eqref{ic}. The existence and uniqueness of such $d_{l,1}^{\varepsilon},d_{l,2}^{\varepsilon},\cdots,d_{l,n}^{\varepsilon}$ $,d_{m,1}^{\varepsilon},d_{m,2}^{\varepsilon},\cdots,$ $d_{m,n}^{\varepsilon},d_{r,1}^{\varepsilon},d_{r,2}^{\varepsilon}$ $,\cdots,d_{r,n}^{\varepsilon}$ follows by direct application of Picard-Lindelöf Theorm (see \cite{coddington1955theory}). Moreover $d_{l,1}^{\varepsilon},d_{l,2}^{\varepsilon},\cdots,d_{l,n}^{\varepsilon}$$,d_{m,1}^{\varepsilon},d_{m,2}^{\varepsilon},\cdots,d_{m,n}^{\varepsilon},\\ d_{r,1}^{\varepsilon},d_{r,2}^{\varepsilon},\cdots,d_{r,n}^{\varepsilon}$ lie in $C^{1}(0,T)\cap C[0,T]$ as a consequence of Picard-Lindelöf Theorem . So, we have 

	\begin{multline}\label{ge2}
 \int_{\Omega_{\mathcal{L}}^{\varepsilon}} \partial_{t} v_{l,n}^{\varepsilon}w_{l,k}^{\varepsilon}dx+
 \int_{\Omega_{\mathcal{R}}^{\varepsilon}} \partial_{t} v_{r,n}^{\varepsilon}w_{r,k}^{\varepsilon}dx+
\varepsilon^{\alpha}  \int_{\Omega_{\mathcal{M}}^{\varepsilon}} \partial_{t} v_{m,n}^{\varepsilon}w_{m,k}^{\varepsilon}dx
+\\
\int_{\Omega_{\mathcal{L}}^{\varepsilon}}D_{L}\nabla v_{l,n} ^{\varepsilon}\nabla w_{l,k}^{\varepsilon} dx+ \int_{\Omega_{\mathcal{R}}^{\varepsilon}}D_{R}\nabla v_{r,n} ^{\varepsilon}\nabla w_{r,k}^{\varepsilon} dx+ \varepsilon^{\beta}\int_{\Omega_{\mathcal{M}}^{\varepsilon}}D_{M}^{\varepsilon}\nabla v_{m,n} ^{\varepsilon}\nabla w_{m,k}^{\varepsilon} dx\\
-\int_{\Omega_{\mathcal{L}}^{\varepsilon}}B_{L}P_{\delta}(v_{l,n}^{\varepsilon}-u_{b})\nabla w_{l,k}^{\varepsilon} dx
-\int_{\Omega_{\mathcal{R}}^{\varepsilon}}B_{R}P_{\delta}(v_{r,n}^{\varepsilon}-u_{b})\nabla w_{r,k}^{\varepsilon} dx\\
-\varepsilon^{\gamma}\int_{\Omega_{\mathcal{M}}^{\varepsilon}}B_{M}^{\varepsilon}P_{\delta}(v_{m,n}^{\varepsilon}-u_{b})\nabla w_{m,k}^{\varepsilon} dx\\
=\int_{\Omega_{\mathcal{L}}^{\varepsilon}}f_{b_{l}}w_{l,k}^{\varepsilon} dx  +\int_{\Omega_{\mathcal{R}}^{\varepsilon}}f_{b_{r}}w_{r,k}^{\varepsilon}dx +\varepsilon^{\alpha}\int_{\Omega_{\mathcal{M}}^{\varepsilon}}f_{a_{m}}^{\varepsilon}w_{m,k}^{\varepsilon} dx +\varepsilon^{\beta}\int_{\Omega_{\mathcal{M}}^{\varepsilon}}f_{b_{m}}^{\varepsilon}w_{m,k}^{\varepsilon}dx\\
-\int_{\Gamma_{h}^{\varepsilon}\cap \partial\Omega_{\mathcal{L}}^{\varepsilon}}g_{b_{l}} w_{l,k}^{\varepsilon} d\sigma-\int_{\Gamma_{h}^{\varepsilon}\cap \partial\Omega_{\mathcal{R}}^{\varepsilon}}g_{b_{r}} w_{r,k}^{\varepsilon} d\sigma-\varepsilon^{\xi}\int_{\Gamma_{0}^{\varepsilon}}g_{0}^{\varepsilon} w_{m,k}^{\varepsilon} d\sigma-\varepsilon^{\beta}\int_{\Gamma_{0}^{\varepsilon}}g_{b_{0}}^{\varepsilon} w_{m,k}^{\varepsilon}d\sigma\\
+\int_{\mathcal{B_{L}^{\varepsilon}}}(D_{L}-\varepsilon^{\beta}D_{M})\nabla u_{b}\cdot n_{l}w_{l,k}^{\varepsilon}d\sigma+\int_{\mathcal{B_{R}^{\varepsilon}}}(D_{R}-\varepsilon^{\beta}D_{M})\nabla u_{b}\cdot n_{l}w_{r,k}^{\varepsilon}d\sigma ,
\end{multline}
for all $k\in \{1,2,\cdots,n\}$.\\
Now, multiplying \eqref{ge2} with $\mathbbm{1}_{\Omega_{\mathcal{L}}^{\varepsilon}}\partial_{t}d_{l,k}^{\varepsilon}, \mathbbm{1}_{\Omega_{\mathcal{R}}^{\varepsilon}}\partial_{t}d_{r,k}^{\varepsilon},  \mathbbm{1}_{\Omega_{\mathcal{M}}^{\varepsilon}}\partial_{t}d_{m,k}^{\varepsilon}$ for $k\in \{1,2,\cdots,n\}$ and summing over $k=1,2,\cdots,n$, we get

\begin{multline}\label{ge3}
 \int_{\Omega_{\mathcal{L}}^{\varepsilon}} \partial_{t} v_{l,n}^{\varepsilon} \partial_{t}v_{l,n}^{\varepsilon}dx+
\int_{\Omega_{\mathcal{R}}^{\varepsilon}} \partial_{t} v_{r,n}^{\varepsilon} \partial_{t}v_{r,n}^{\varepsilon}dx+
\varepsilon^{\alpha}  \int_{\Omega_{\mathcal{M}}^{\varepsilon}} \partial_{t}v_{m,n}^{\varepsilon} \partial_{t}v_{m,n}^{\varepsilon}dx+\\
\int_{\Omega_{\mathcal{L}}^{\varepsilon}}D_{L}\nabla v_{l,n} ^{\varepsilon}\nabla \partial_{t}v_{l,n}^{\varepsilon} dx+ \int_{\Omega_{\mathcal{R}}^{\varepsilon}}D_{R}\nabla v_{r,n} ^{\varepsilon}\nabla \partial_{t}v_{r,n}^{\varepsilon} dx+ \varepsilon^{\beta}\int_{\Omega_{\mathcal{M}}^{\varepsilon}}D_{M}^{\varepsilon}\nabla v_{m,n} ^{\varepsilon}\nabla \partial_{t}v_{m,n}^{\varepsilon} dx\\
-\int_{\Omega_{\mathcal{L}}^{\varepsilon}}B_{L}P_{\delta}(v_{l,n}^{\varepsilon}-u_{b})\nabla \partial_{t}v_{l,n}^{\varepsilon} dx
-\int_{\Omega_{\mathcal{R}}^{\varepsilon}}B_{R}P_{\delta}(v_{r,n}^{\varepsilon}-u_{b})\nabla \partial_{t}v_{r,n}^{\varepsilon} dx\\
-\varepsilon^{\gamma}\int_{\Omega_{\mathcal{M}}^{\varepsilon}}B_{M}^{\varepsilon}P_{\delta}(v_{m,n}^{\varepsilon}-u_{b})\nabla\partial_{t} v_{m,n}^{\varepsilon} dx\\
=\int_{\Omega_{\mathcal{L}}^{\varepsilon}}f_{b_{l}}\partial_{t}v_{l,n}^{\varepsilon} dx  +\int_{\Omega_{\mathcal{R}}^{\varepsilon}}f_{b_{r}}\partial_{t}v_{r,n}^{\varepsilon}dx +\varepsilon^{\alpha}\int_{\Omega_{\mathcal{M}}^{\varepsilon}}f_{a_{m}}^{\varepsilon}\partial_{t}v_{m,n}^{\varepsilon} dx \\
+\varepsilon^{\beta}\int_{\Omega_{\mathcal{M}}^{\varepsilon}}f_{b_{m}}^{\varepsilon}\partial_{t}v_{m,n}^{\varepsilon}dx\\
-\int_{\Gamma_{h}^{\varepsilon}\cap \partial\Omega_{\mathcal{L}}^{\varepsilon}}g_{b_{l}}\partial_{t}v_{l,n}^{\varepsilon} d\sigma-\int_{\Gamma_{h}^{\varepsilon}\cap \partial\Omega_{\mathcal{R}}^{\varepsilon}}g_{b_{r}} \partial_{t}v_{r,n}^{\varepsilon} d\sigma-\varepsilon^{\xi}\int_{\Gamma_{0}^{\varepsilon}}g_{0}^{\varepsilon} \partial_{t}v_{m,n}^{\varepsilon} d\sigma\\
-\varepsilon^{\beta}\int_{\Gamma_{0}^{\varepsilon}}g_{b_{0}}^{\varepsilon} \partial_{t}v_{m,n}^{\varepsilon} d\sigma\\
+\int_{\mathcal{B_{L}^{\varepsilon}}}(D_{L}-\varepsilon^{\beta}D_{M})\nabla u_{b}\cdot n_{l}\partial_{t}v_{l,n}^{\varepsilon}d\sigma+\int_{\mathcal{B_{R}^{\varepsilon}}}(D_{R}-\varepsilon^{\beta}D_{M})\nabla u_{b}\cdot n_{r}\partial_{t}v_{r,n}^{\varepsilon}d\sigma .
\end{multline}
To derive the desired estimates, we use the following identities,
\begin{equation}\label{ge4}
\frac{1}{2}\frac{d}{dt}\int_{\Omega_{\mathcal{L}}^{\varepsilon}}D_{L}\nabla v_{l,n} ^{\varepsilon}\nabla v_{l,n}^{\varepsilon} dx=\int_{\Omega_{\mathcal{L}}^{\varepsilon}}D_{L}\nabla v_{l,n} ^{\varepsilon}\nabla \partial_{t}v_{l,n}^{\varepsilon} dx,
\end{equation}
\begin{equation}\label{ge5}
\frac{1}{2}\frac{d}{dt}\int_{\Omega_{\mathcal{R}}^{\varepsilon}}D_{R}\nabla v_{r,n} ^{\varepsilon}\nabla v_{r,n}^{\varepsilon} dx=\int_{\Omega_{\mathcal{R}}^{\varepsilon}}D_{R}\nabla v_{r,n} ^{\varepsilon}\nabla \partial_{t}v_{r,n}^{\varepsilon} dx,
\end{equation}
\begin{equation}\label{ge6}
\frac{\varepsilon^{\beta}}{2}\frac{d}{dt}\int_{\Omega_{\mathcal{M}}^{\varepsilon}}D_{M}^{\varepsilon}\nabla v_{m,n} ^{\varepsilon}\nabla v_{m,n}^{\varepsilon} dx\\
=\varepsilon^{\beta}\int_{\Omega_{\mathcal{M}}^{\varepsilon}}D_{M}^{\varepsilon}\nabla v_{m,n} ^{\varepsilon}\nabla \partial_{t}v_{m}^{\varepsilon} dx,
\end{equation}
\begin{multline}\label{ge7}
\int_{\Omega_{\mathcal{L}}^{\varepsilon}}B_{L}P_{\delta}(v_{l,n}^{\varepsilon}-u_{b})\nabla \partial_{t}v_{l,n}^{\varepsilon} dx=\frac{d}{dt}\int_{\Omega_{\mathcal{L}}^{\varepsilon}}B_{L}P_{\delta}(v_{l,n}^{\varepsilon}-u_{b})\nabla v_{l,n}^{\varepsilon} dx\\-\int_{\Omega_{\mathcal{L}}^{\varepsilon}}B_{L}P_{\delta}^{\prime}(v_{l,n}^{\varepsilon}-u_{b})\partial_{t}(v_{l,n}^{\varepsilon}-u_{b})\nabla v_{l,n}^{\varepsilon} dx,
\end{multline}

\begin{multline}\label{ge8}
\int_{\Omega_{\mathcal{R}}^{\varepsilon}}B_{R}P_{\delta}(v_{r,n}^{\varepsilon}-u_{b})\nabla \partial_{t}v_{r,n}^{\varepsilon} dx=\frac{d}{dt}\int_{\Omega_{\mathcal{R}}^{\varepsilon}}B_{R}P_{\delta}(v_{r,n}^{\varepsilon}-u_{b})\nabla v_{r,n}^{\varepsilon} dx\\-\int_{\Omega_{\mathcal{R}}^{\varepsilon}}B_{R}P_{\delta}^{\prime}(v_{r,n}^{\varepsilon}-u_{b})\partial_{t}(v_{r,n}^{\varepsilon}-u_{b})\nabla v_{r,n}^{\varepsilon} dx,
\end{multline}

\begin{multline}\label{}
\varepsilon^{\gamma}\int_{\Omega_{\mathcal{M}}^{\varepsilon}}B_{M}P_{\delta}(v_{m,n}^{\varepsilon}-u_{b})\nabla \partial_{t}v_{m,n}^{\varepsilon} dx=\varepsilon^{\gamma}\frac{d}{dt}\int_{\Omega_{\mathcal{M}}^{\varepsilon}}B_{M}P_{\delta}(v_{m,n}^{\varepsilon}-u_{b})\nabla v_{m,n}^{\varepsilon} dx\\-\varepsilon^{\gamma}\int_{\Omega_{\mathcal{M}}^{\varepsilon}}B_{M}P_{\delta}^{\prime}(v_{m,n}^{\varepsilon}-u_{b})\partial_{t}(v_{m,n}^{\varepsilon}-u_{b})\nabla v_{m,n}^{\varepsilon} dx,
\end{multline}
\begin{equation}
\int_{\Gamma_{h}^{\varepsilon}\cap \partial\Omega_{\mathcal{L}}^{\varepsilon}}g_{b_{l}}\partial_{t}v_{l,n}^{\varepsilon} d\sigma=\frac{d}{dt}\int_{\Gamma_{h}^{\varepsilon}\cap \partial\Omega_{\mathcal{L}}^{\varepsilon}}g_{b_{l}}v_{l,n}^{\varepsilon} d\sigma-\int_{\Gamma_{h}^{\varepsilon}\cap \partial\Omega_{\mathcal{L}}^{\varepsilon}}\partial_{t}g_{b_{l}}v_{l,n}^{\varepsilon} d\sigma
\end{equation}
\begin{equation}\label{}
\int_{\Gamma_{h}^{\varepsilon}\cap \partial\Omega_{\mathcal{R}}^{\varepsilon}}g_{b_{r}}\partial_{t}v_{r,n}^{\varepsilon} d\sigma=\frac{d}{dt}\int_{\Gamma_{h}^{\varepsilon}\cap \partial\Omega_{\mathcal{R}}^{\varepsilon}}g_{b_{r}}v_{r,n}^{\varepsilon} d\sigma-\int_{\Gamma_{h}^{\varepsilon}\cap \partial\Omega_{\mathcal{R}}^{\varepsilon}}\partial_{t}g_{b_{r}}v_{r,n}^{\varepsilon} d\sigma
\end{equation}
\begin{equation}\label{}
\varepsilon^{\xi}\int_{\Gamma_{0}^{\varepsilon}}g_{0}^{\varepsilon} \partial_{t}v_{m,n}^{\varepsilon} d\sigma=\varepsilon^{\xi}\frac{d}{dt}\int_{\Gamma_{0}^{\varepsilon}}g_{0}^{\varepsilon} v_{m,n}^{\varepsilon} d\sigma-\varepsilon^{\xi}\int_{\Gamma_{0}^{\varepsilon}}\partial_{t}g_{0}^{\varepsilon} v_{m,n}^{\varepsilon} d\sigma
\end{equation}
\begin{equation}\label{}
\varepsilon^{\beta}\int_{\Gamma_{0}^{\varepsilon}}g_{b_{0}}^{\varepsilon} \partial_{t}v_{m,n}^{\varepsilon} d\sigma=\frac{d}{dt}\varepsilon^{\beta}\int_{\Gamma_{0}^{\varepsilon}}g_{b_{0}}^{\varepsilon} v_{m,n}^{\varepsilon} d\sigma-\varepsilon^{\beta}\int_{\Gamma_{0}^{\varepsilon}}\partial_{t}g_{b_{0}}^{\varepsilon} v_{m,n}^{\varepsilon} d\sigma.
\end{equation}

We obtain

\begin{multline}\label{ge9}
\|\partial_{t}v_{l,n}^{\varepsilon}\|_{L^{2}(\Omega_{\mathcal{L}}^{\varepsilon})}^{2}+\|\partial_{t}v_{r,n}^{\varepsilon}\|_{L^{2}(\Omega_{\mathcal{R}}^{\varepsilon})}^{2}+\varepsilon^{\alpha}\|\partial_{t}v_{m,n}^{\varepsilon}\|_{L^{2}(\Omega_{\mathcal{M}}^{\varepsilon})}^{2}+\\
\frac{1}{2}\frac{d}{dt}\int_{\Omega_{\mathcal{L}}^{\varepsilon}}D_{L}\nabla v_{l,n} ^{\varepsilon}\nabla v_{l,n}^{\varepsilon} dx+\frac{1}{2}\frac{d}{dt}\int_{\Omega_{\mathcal{R}}^{\varepsilon}}D_{R}\nabla v_{r,n} ^{\varepsilon}\nabla v_{r,n}^{\varepsilon} dx\\
+\frac{\varepsilon^{\beta}}{2}\frac{d}{dt}\int_{\Omega_{\mathcal{M}}^{\varepsilon}}D_{M}^{\varepsilon}\nabla v_{m,n} ^{\varepsilon}\nabla v_{m,n}^{\varepsilon} dx\\
=\frac{d}{dt}\int_{\Omega_{\mathcal{L}}^{\varepsilon}}B_{L}P_{\delta}(v_{l,n}^{\varepsilon}-u_{b})\nabla v_{l,n}^{\varepsilon} dx+\frac{d}{dt}\int_{\Omega_{\mathcal{R}}^{\varepsilon}}B_{R}P_{\delta}(v_{r,n}^{\varepsilon}-u_{b})\nabla v_{r,n}^{\varepsilon} dx\\
+\varepsilon^{\gamma}\frac{d}{dt}\int_{\Omega_{\mathcal{M}}^{\varepsilon}}B_{M}P_{\delta}(v_{m,n}^{\varepsilon}-u_{b})\nabla v_{m,n}^{\varepsilon} dx\\-\int_{\Omega_{\mathcal{L}}^{\varepsilon}}B_{L}P_{\delta}^{\prime}(v_{l,n}^{\varepsilon}-u_{b})\partial_{t}(v_{l,n}^{\varepsilon}-u_{b})\nabla v_{l,n}^{\varepsilon} dx\\
-\int_{\Omega_{\mathcal{R}}^{\varepsilon}}B_{R}P_{\delta}^{\prime}(v_{r,n}^{\varepsilon}-u_{b})\partial_{t}(v_{r,n}^{\varepsilon}-u_{b})\nabla v_{r,n}^{\varepsilon} dx\\
-\varepsilon^{\gamma}\int_{\Omega_{\mathcal{M}}^{\varepsilon}}B_{M}P_{\delta}^{\prime}(v_{m,n}^{\varepsilon}-u_{b})\partial_{t}(v_{m,n}^{\varepsilon}-u_{b})\nabla v_{m,n}^{\varepsilon} dx\\
+\int_{\Omega_{\mathcal{L}}^{\varepsilon}}f_{b_{l}}\partial_{t}v_{l,n}^{\varepsilon} dx  +\int_{\Omega_{\mathcal{R}}^{\varepsilon}}f_{b_{r}}\partial_{t}v_{r,n}^{\varepsilon}dx +\varepsilon^{\alpha}\int_{\Omega_{\mathcal{M}}^{\varepsilon}}f_{a_{m}}^{\varepsilon}\partial_{t}v_{m,n}^{\varepsilon} dx \\
+\varepsilon^{\beta}\int_{\Omega_{\mathcal{M}}^{\varepsilon}}f_{b_{m}}^{\varepsilon}\partial_{t}v_{m,n}^{\varepsilon}dx
-\frac{d}{dt}\int_{\Gamma_{h}^{\varepsilon}\cap \partial\Omega_{\mathcal{L}}^{\varepsilon}}g_{b_{l}}v_{l,n}^{\varepsilon} d\sigma-\frac{d}{dt}\int_{\Gamma_{h}^{\varepsilon}\cap \partial\Omega_{\mathcal{R}}^{\varepsilon}}g_{b_{r}}v_{r,n}^{\varepsilon} d\sigma\\
-\varepsilon^{\xi}\frac{d}{dt}\int_{\Gamma_{0}^{\varepsilon}}g_{0}^{\varepsilon} v_{m,n}^{\varepsilon} d\sigma
-\frac{d}{dt}\varepsilon^{\beta}\int_{\Gamma_{0}^{\varepsilon}}g_{b_{0}}^{\varepsilon} v_{m,n}^{\varepsilon} d\sigma
+\int_{\Gamma_{h}^{\varepsilon}\cap \partial\Omega_{\mathcal{L}}^{\varepsilon}}\partial_{t}g_{b_{l}}v_{l,n}^{\varepsilon} d\sigma\\
+\int_{\Gamma_{h}^{\varepsilon}\cap \partial\Omega_{\mathcal{R}}^{\varepsilon}}\partial_{t}g_{b_{r}}v_{r,n}^{\varepsilon} d\sigma
+\varepsilon^{\xi}\int_{\Gamma_{0}^{\varepsilon}}\partial_{t}g_{0}^{\varepsilon} v_{m,n}^{\varepsilon} d\sigma+\varepsilon^{\beta}\int_{\Gamma_{0}^{\varepsilon}}\partial_{t}g_{b_{0}}^{\varepsilon} v_{m,n}^{\varepsilon} d\sigma\\
+\int_{\mathcal{B_{L}^{\varepsilon}}}(D_{L}-\varepsilon^{\beta}D_{M})\nabla u_{b}\cdot n_{l}\partial_{t}v_{l,n}^{\varepsilon}d\sigma+\int_{\mathcal{B_{R}^{\varepsilon}}}(D_{R}-\varepsilon^{\beta}D_{M})\nabla u_{b}\cdot n_{r}\partial_{t}v_{r,n}^{\varepsilon}d\sigma. 
\end{multline}
Now, integrating \eqref{ge9} from $0$ to $t$, we get

\begin{multline}\label{ge10}
\|\partial_{t}v_{l,n}^{\varepsilon}\|_{L^{2}(0,t;L^{2}(\Omega_{\mathcal{L}}^{\varepsilon}))}^{2}+\|\partial_{t}v_{r,n}^{\varepsilon}\|_{L^{2}(0,t;L^{2}(\Omega_{\mathcal{R}}^{\varepsilon}))}^{2}+\varepsilon^{\alpha}\|\partial_{t}v_{m,n}^{\varepsilon}\|_{L^{2}(0,t;L^{2}(\Omega_{\mathcal{M}}^{\varepsilon}))}^{2}\\+
\frac{1}{2}\int_{\Omega_{\mathcal{L}}^{\varepsilon}}D_{L}\nabla v_{l,n} ^{\varepsilon}\nabla v_{l,n}^{\varepsilon} dx+\frac{1}{2}\int_{\Omega_{\mathcal{R}}^{\varepsilon}}D_{R}\nabla v_{r,n} ^{\varepsilon}\nabla v_{r,n}^{\varepsilon} dx\\
+\frac{\varepsilon^{\beta}}{2}\int_{\Omega_{\mathcal{M}}^{\varepsilon}}D_{M}^{\varepsilon}\nabla v_{m,n} ^{\varepsilon}\nabla v_{m,n}^{\varepsilon} dx\\
=\int_{\Omega_{\mathcal{L}}^{\varepsilon}}B_{L}P_{\delta}(v_{l,n}^{\varepsilon}-u_{b})\nabla v_{l,n}^{\varepsilon} dx+\int_{\Omega_{\mathcal{R}}^{\varepsilon}}B_{R}P_{\delta}(v_{r,n}^{\varepsilon}-u_{b})\nabla v_{r,n}^{\varepsilon} dx\\
+\varepsilon^{\gamma}\int_{\Omega_{\mathcal{M}}^{\varepsilon}}B_{M}P_{\delta}(v_{m,n}^{\varepsilon}-u_{b})\nabla v_{m,n}^{\varepsilon} dx-\int_{0}^{t}\int_{\Omega_{\mathcal{L}}^{\varepsilon}}B_{L}P_{\delta}^{\prime}(v_{l,n}^{\varepsilon}-u_{b})\partial_{t}(v_{l,n}^{\varepsilon}-u_{b})\nabla v_{l,n}^{\varepsilon} dxdt\\
-\int_{0}^{t}\int_{\Omega_{\mathcal{R}}^{\varepsilon}}B_{R}P_{\delta}^{\prime}(v_{r,n}^{\varepsilon}-u_{b})\partial_{t}(v_{r,n}^{\varepsilon}-u_{b})\nabla v_{r,n}^{\varepsilon} dxdt
\\
-\varepsilon^{\gamma}\int_{0}^{t}\int_{\Omega_{\mathcal{M}}^{\varepsilon}}B_{M}P_{\delta}^{\prime}(v_{m,n}^{\varepsilon}-u_{b})\partial_{t}(v_{m,n}^{\varepsilon}-u_{b})\nabla v_{m,n}^{\varepsilon} dxdt\\
+\int_{0}^{t}\int_{\Omega_{\mathcal{L}}^{\varepsilon}}f_{b_{l}}\partial_{t}v_{l,n}^{\varepsilon} dx dt +\int_{0}^{t}\int_{\Omega_{\mathcal{R}}^{\varepsilon}}f_{b_{r}}\partial_{t}v_{r,n}^{\varepsilon}dxdt +\varepsilon^{\alpha}\int_{0}^{t}\int_{\Omega_{\mathcal{M}}^{\varepsilon}}f_{a_{m}}^{\varepsilon}\partial_{t}v_{m,n}^{\varepsilon} dxdt \\
+\varepsilon^{\beta}\int_{0}^{t}\int_{\Omega_{\mathcal{M}}^{\varepsilon}}f_{b_{m}}^{\varepsilon}\partial_{t}v_{m,n}^{\varepsilon}dxdt\\
-\int_{\Gamma_{h}^{\varepsilon}\cap \partial\Omega_{\mathcal{L}}^{\varepsilon}}g_{b_{l}}v_{l,n}^{\varepsilon} d\sigma-\int_{\Gamma_{h}^{\varepsilon}\cap \partial\Omega_{\mathcal{R}}^{\varepsilon}}g_{b_{r}}v_{r,n}^{\varepsilon} d\sigma-\varepsilon^{\xi}\int_{\Gamma_{0}^{\varepsilon}}g_{0}^{\varepsilon} v_{m,n}^{\varepsilon} d\sigma-\varepsilon^{\beta}\int_{\Gamma_{0}^{\varepsilon}}g_{b_{0}}^{\varepsilon} v_{m,n}^{\varepsilon} d\sigma\\
+\int_{0}^{t}\int_{\Gamma_{h}^{\varepsilon}\cap \partial\Omega_{\mathcal{L}}^{\varepsilon}}\partial_{t}g_{b_{l}}v_{l,n}^{\varepsilon} d\sigma dt+\int_{0}^{t}\int_{\Gamma_{h}^{\varepsilon}\cap \partial\Omega_{\mathcal{R}}^{\varepsilon}}\partial_{t}g_{b_{r}}v_{r,n}^{\varepsilon} d\sigma dt\\
+\varepsilon^{\xi}\int_{0}^{t}\int_{\Gamma_{0}^{\varepsilon}}\partial_{t}g_{0}^{\varepsilon} v_{m,n}^{\varepsilon} d\sigma dt+\varepsilon^{\beta}\int_{0}^{t}\int_{\Gamma_{0}^{\varepsilon}}\partial_{t}g_{b_{0}}^{\varepsilon} v_{m,n}^{\varepsilon} d\sigma dt\\
+\int_{0}^{t}\int_{\mathcal{B_{L}^{\varepsilon}}}(D_{L}-\varepsilon^{\beta}D_{M})\nabla u_{b}\cdot n_{l}\partial_{t}v_{l,n}^{\varepsilon}d\sigma dt+\int_{0}^{t}\int_{\mathcal{B_{R}^{\varepsilon}}}(D_{R}-\varepsilon^{\beta}D_{M})\nabla u_{b}\cdot n_{r}\partial_{t}v_{r,n}^{\varepsilon}d\sigma dt\\
+\frac{1}{2}\int_{\Omega_{\mathcal{L}}^{\varepsilon}}D_{L}\nabla v_{l,n}^{\varepsilon}(0) \nabla v_{l,,n}^{\varepsilon}(0) dx+\frac{1}{2}\int_{\Omega_{\mathcal{R}}^{\varepsilon}}D_{R}\nabla v_{r} ^{\varepsilon}(0)\nabla v_{r,n}^{\varepsilon}(0) dx\\
+\frac{\varepsilon^{\beta}}{2}\int_{\Omega_{\mathcal{M}}^{\varepsilon}}D_{M}^{\varepsilon}\nabla v_{m,n}^{\varepsilon}(0) \nabla v_{m,n}^{\varepsilon}(0) dx
-\int_{\Omega_{\mathcal{L}}^{\varepsilon}}B_{L}P_{\delta}(v_{l,n}^{\varepsilon}(0)-u_{b}(0))\nabla v_{l,n}^{\varepsilon}(0) dx\\
-\int_{\Omega_{\mathcal{R}}^{\varepsilon}}B_{R}P_{\delta}(v_{r,n}^{\varepsilon}(0)-u_{b}(0))\nabla v_{r,n}^{\varepsilon}(0) dx
-\varepsilon^{\gamma}\int_{\Omega_{\mathcal{M}}^{\varepsilon}}B_{M}P_{\delta}(v_{m,n}^{\varepsilon}(0)-u_{b}(0))\nabla v_{m,n}^{\varepsilon}(0) dx\\
+\int_{\Gamma_{h}^{\varepsilon}\cap \partial\Omega_{\mathcal{L}}^{\varepsilon}}g_{b_{l}}(0)v_{l,n}^{\varepsilon}(0) d\sigma+\int_{\Gamma_{h}^{\varepsilon}\cap \partial\Omega_{\mathcal{R}}^{\varepsilon}}g_{b_{r}}(0)v_{r,n}^{\varepsilon}(0) d\sigma+\varepsilon^{\xi}\int_{\Gamma_{0}^{\varepsilon}}g_{0}^{\varepsilon}(0) v_{m,n}^{\varepsilon}(0) d\sigma\\
+\varepsilon^{\beta}\int_{\Gamma_{0}^{\varepsilon}}g_{b_{0}}^{\varepsilon}(0) v_{m,n}^{\varepsilon} (0)d\sigma.
\end{multline}

By \eqref{a22}, Cauchy-Schwarz's inequality,  \eqref{e4} and Young's inequality, we obtain
\begin{equation}\label{ge11}
\begin{aligned}
\int_{\Omega_{\mathcal{L}}^{\varepsilon}}|B_{L}P_{\delta}(v_{l,n}^{\varepsilon}-u_{b})\nabla v_{l,n}^{\varepsilon} |dx&\leq C\int_{\Omega_{\mathcal{L}}^{\varepsilon}}|P_{\delta}(v_{l,n}^{\varepsilon}-u_{b})||\nabla v_{l,n}^{\varepsilon} |dx\\
&\leq C\|P_{\delta}(v_{l,n}^{\varepsilon}-u_{b})\|_{L^{2}(\Omega_{\mathcal{L}^{\varepsilon}})}\|\nabla v_{l,n}^{\varepsilon}\|_{L^{2}(\Omega_{\mathcal{L}^{\varepsilon}})}\\
&\leq C\|P_{\delta}(v_{l}^{\varepsilon}-u_{b})\|_{L^{2}(\Omega_{\mathcal{L}^{\varepsilon}})}^{2}+C\|\nabla v_{l,n}^{\varepsilon}\|_{L^{2}(\Omega_{\mathcal{L}^{\varepsilon}})}^{2}\\
&\leq C+C\|\nabla v_{l,n}^{\varepsilon}\|_{L^{2}(\Omega_{\mathcal{L}^{\varepsilon}})}^{2}.
\end{aligned}
\end{equation}
Similarly it holds
\begin{equation}\label{ge12}
\int_{\Omega_{\mathcal{R}}^{\varepsilon}}|B_{L}P_{\delta}(v_{r,n}^{\varepsilon}-u_{b})\nabla v_{r,n}^{\varepsilon} |dx\leq C+C\|\nabla v_{r,n}^{\varepsilon}\|_{L^{2}(\Omega_{\mathcal{R}^{\varepsilon}})}^{2},
\end{equation}
and
\begin{equation}\label{ge13}
\varepsilon^{\gamma}\int_{\Omega_{\mathcal{M}}^{\varepsilon}}|B_{M}P_{\delta}(v_{m,n}^{\varepsilon}-u_{b})\nabla v_{m,n}^{\varepsilon} |dx\leq C+C\varepsilon^{\beta}\|\nabla v_{m,n}^{\varepsilon}\|_{L^{2}(\Omega_{\mathcal{M}}^{\varepsilon})}^{2},
\end{equation}
for \eqref{ge13} we used the assumption \ref{assump6}.
Using Cauchy Schwarz's inequality, for $\eta>0$ and $ C(\eta)>0$ 

\begin{equation}\label{ge14}
\int_{0}^{t}\int_{\Omega_{\mathcal{L}}^{\varepsilon}}|f_{b_{l}}\partial_{t}v_{l,n}^{\varepsilon} |dxdt \leq C(\eta) \|f_{b_{l}}\|_{L^{2}(0,t;L^{2}(\Omega_{\mathcal{L}}^{\varepsilon}))}^{2}+\eta \|\partial_{t}v_{l,n}^{\varepsilon}\|_{L^{2}(0,t;L^{2}(\Omega_{\mathcal{L}}^{\varepsilon}))}^{2},
\end{equation}

\begin{equation}\label{ge15}
\int_{0}^{t}\int_{\Omega_{\mathcal{R}}^{\varepsilon}}|f_{b_{r}}\partial_{t}v_{r,n}^{\varepsilon} |dxdt \leq C(\eta) \|f_{b_{r}}\|_{L^{2}(0,t;L^{2}(\Omega_{\mathcal{R}}^{\varepsilon}))}^{2}+\eta \|\partial_{t}v_{r,n}^{\varepsilon}\|_{L^{2}(0,t;L^{2}(\Omega_{\mathcal{R}}^{\varepsilon}))}^{2},
\end{equation}

\begin{equation}\label{ge16}
\begin{aligned}
\varepsilon^{\alpha}\int_{0}^{t}\int_{\Omega_{\mathcal{M}}^{\varepsilon}}|f_{a_{m}}\partial_{t}v_{m,n}^{\varepsilon} |dxdt& \leq \varepsilon^{\alpha}C(\eta) \|f_{a_{m}}\|_{L^{2}(0,t;L^{2}(\Omega_{\mathcal{M}}^{\varepsilon}))}^{2}\\
&\hspace{2cm}+\varepsilon^{\alpha}\eta \|\partial_{t}v_{m,n}^{\varepsilon}\|_{L^{2}(0,t;L^{2}(\Omega_{\mathcal{M}}^{\varepsilon}))}^{2},
\end{aligned}
\end{equation}

\begin{equation}\label{ge17}
\varepsilon^{\beta}\int_{0}^{t}\int_{\Omega_{\mathcal{M}}^{\varepsilon}}|f_{b_{m}}^{\varepsilon}\partial_{t}v_{m}^{\varepsilon} |dxdt \leq \varepsilon^{\beta}C(\eta) \|f_{b_{m}}^{\varepsilon}\|_{L^{2}(0,t;L^{2}(\Omega_{\mathcal{M}}^{\varepsilon}))}^{2}+\varepsilon^{\alpha}\eta \|\partial_{t}v_{m,n}^{\varepsilon}\|_{L^{2}(0,t;L^{2}(\Omega_{\mathcal{M}}^{\varepsilon}))}^{2}.
\end{equation}

Using the regularity and the structure of $P_{\delta}(\cdot)$ together with Cauchy-Schwarz's inequality, Young's inequality, \eqref{a22}, \eqref{e2} and \ref{assump7}, we see that 

\begin{equation}\label{ge18}
\begin{aligned}
\int_{0}^{t}\int_{\Omega_{\mathcal{L}}^{\varepsilon}}|B_{L}P_{\delta}^{\prime}(v_{l,n}^{\varepsilon}-u_{b})\partial_{t}&(v_{l,n}^{\varepsilon}-u_{b})\nabla v_{l,n}^{\varepsilon} |dxdt\\
&\leq C\int_{0}^{t}\int_{\Omega_{\mathcal{L}}^{\varepsilon}}|\partial_{t}(v_{l,n}^{\varepsilon}-u_{b})\nabla v_{l,n}^{\varepsilon} |dxdt\\
&\leq C \|\partial_{t}v_{l,n}^{\varepsilon}\|_{L^{2}(0,t;L^{2}(\Omega_{\mathcal{L}}^{\varepsilon}))}\|\nabla v_{l,n}^{\varepsilon}\|_{L^{2}(0,t;L^{2}(\Omega_{\mathcal{L}}^{\varepsilon}))}\\
&\hspace{1cm}+C \|\partial_{t}u_{b}\|_{L^{2}(0,t;L^{2}(\Omega_{\mathcal{L}}^{\varepsilon}))}\|\nabla v_{l,n}^{\varepsilon}\|_{L^{2}(0,t;L^{2}(\Omega_{\mathcal{L}}^{\varepsilon}))}\\
&\leq C\eta \|\partial_{t}v_{l,n}^{\varepsilon}\|_{L^{2}(0,t;L^{2}(\Omega_{\mathcal{L}}^{\varepsilon}))}^{2}+C(\eta) \|\nabla v_{l,n}^{\varepsilon}\|_{L^{2}(0,t;L^{2}(\Omega_{\mathcal{L}}^{\varepsilon}))}^{2}\\
&\hspace{1cm }+C+C \|\nabla v_{l,n}^{\varepsilon}\|_{L^{2}(0,t;L^{2}(\Omega_{\mathcal{L}}^{\varepsilon}))}^{2}\\
&\leq \eta C\|\partial_{t}v_{l,n}^{\varepsilon}\|_{L^{2}(0,t;L^{2}(\Omega_{\mathcal{L}}^{\varepsilon}))}^{2}+C(\eta).
\end{aligned}
\end{equation}
Similarly, it holds
\begin{equation}\label{ge19}
\begin{aligned}
\int_{0}^{t}\int_{\Omega_{\mathcal{R}}^{\varepsilon}}|B_{R}P_{\delta}^{\prime}(v_{r,n}^{\varepsilon}-u_{b})\partial_{t}(v_{r,n}^{\varepsilon}-u_{b})\nabla v_{r,n}^{\varepsilon}| &dxdt\\
&\leq C(\eta)+\eta C\|\partial_{t}v_{r,n}^{\varepsilon}\|_{L^{2}(0,t;L^{2}(\Omega_{\mathcal{R}}^{\varepsilon}))}^{2},
\end{aligned}
\end{equation}
\begin{equation}\label{ge20}
\begin{aligned}
\varepsilon^{\gamma}\int_{0}^{t}\int_{\Omega_{\mathcal{M}}^{\varepsilon}}|B_{M}P_{\delta}^{\prime}(v_{m,n}^{\varepsilon}-u_{b})\partial_{t}(v_{m,n}^{\varepsilon}&-u_{b})\nabla v_{m,n}^{\varepsilon} |dxdt\\
&\leq C(\eta)+\eta C\varepsilon^{\gamma}\|\partial_{t}v_{m,n}^{\varepsilon}\|_{L^{2}(0,t;L^{2}(\Omega_{\mathcal{M}}^{\varepsilon}))}^{2}.
\end{aligned}
\end{equation}
Using (A4), \eqref{l1} and Young's inequality, we get
\begin{equation}\label{ge21}
\begin{aligned}
\int_{\Gamma_{h}^{\varepsilon}\cap \partial\Omega_{\mathcal{L}}^{\varepsilon}}|g_{b_{l}}v_{l,n}^{\varepsilon} |d\sigma&\leq \|g_{b_{l}}\|_{L^{2}(\Gamma_{h}^{\varepsilon}\cap \partial\Omega_{\mathcal{L}}^{\varepsilon})}\| v_{l,n}^{\varepsilon}\|_{L^{2}(\Gamma_{h}\cap \partial\Omega_{\mathcal{L}}^{\varepsilon})}\\
&\leq C\|g_{b_{l}}\|_{L^{2}(\Gamma_{h}^{\varepsilon}\cap \partial\Omega_{\mathcal{L}}^{\varepsilon})}\|\nabla v_{l,n}^{\varepsilon}\|_{L^{2}(\Omega_{\mathcal{L}}^{\varepsilon})}\\
&\leq C(t)+C\|\nabla v_{l,n}^{\varepsilon}\|_{L^{2}(\Omega_{\mathcal{L}}^{\varepsilon})}^{2}.
\end{aligned}
\end{equation}
Similarly, we have 

\begin{equation}\label{ge22}
\begin{aligned}
\int_{\Gamma_{h}^{\varepsilon}\cap \partial\Omega_{\mathcal{R}}^{\varepsilon}}|g_{b_{r}}v_{r,n}^{\varepsilon} |d\sigma\leq C(t)+C\|\nabla v_{r,n}^{\varepsilon}\|_{L^{2}(\Omega_{\mathcal{R}}^{\varepsilon})}^{2}.
\end{aligned}
\end{equation}

Using similar arguments as those used in the proof of \eqref{nee6}, namely,  Cauchy Schwarz's inequality, Lemma \ref{LN1}, Young's inequality and (A6), we get
\begin{equation}\label{ge23}
\begin{aligned}
\varepsilon^{\xi}\int_{\Gamma_{0}^{\varepsilon}}|g_{0}^{\varepsilon}  v_{m,n}^{\varepsilon}| d\sigma&\leq \varepsilon^{\xi-\frac{1}{2}}C\|g_{0}^{\varepsilon}\|_{L^{2}(\Gamma_{0}^{\varepsilon})}^{2}+\varepsilon^{\alpha}C\| v_{m,n}^{\varepsilon}\|_{L^{2}(\Omega_{\mathcal{M}}^{\varepsilon})}^{2}\\
&+C\varepsilon^{\xi+\frac{1}{2}}\|g_{0}^{\varepsilon}\|_{L^{2}(\Gamma_{0}^{\varepsilon})}^{2}+\varepsilon^{\beta} \|\nabla v_{m,n}^{\varepsilon}\|_{L^{2}(\Omega_{\mathcal{M}}^{\varepsilon})}^{2}\\
&\leq C(t)+\varepsilon^{\beta} \|\nabla v_{m,n}^{\varepsilon}\|_{L^{2}(\Omega_{\mathcal{M}}^{\varepsilon})}^{2}.
\end{aligned}
\end{equation}
\\
Similarly, we get
\begin{equation}\label{ge24}
\varepsilon^{\beta}\int_{\Gamma_{0}^{\varepsilon}}|g_{b_{0}} v_{m,n}^{\varepsilon} |d\sigma\leq C(t)+\varepsilon^{\beta} \|\nabla v_{m,n}^{\varepsilon}\|_{L^{2}(\Omega_{\mathcal{M}}^{\varepsilon})}^{2}.
\end{equation}

By using similar arguments as in  \eqref{ge21}, \eqref{ge22}, \eqref{ge23} and \eqref{ge24} together with \eqref{e2}, we get

\begin{equation}\label{ge25}
\int_{0}^{t}\int_{\Gamma_{h}^{\varepsilon}\cap \partial\Omega_{\mathcal{L}}^{\varepsilon}}|\partial_{t}g_{b_{l}}v_{l,n}^{\varepsilon} |d\sigma dt\leq C,
\end{equation}

\begin{equation}\label{ge26}
\int_{0}^{t}\int_{\Gamma_{h}^{\varepsilon}\cap \partial\Omega_{\mathcal{R}}^{\varepsilon}}|\partial_{t}g_{b_{r}}v_{r,n}^{\varepsilon} |d\sigma dt\leq C,
\end{equation}

\begin{equation}\label{ge27}
\varepsilon^{\xi}\int_{0}^{t}\int_{\Gamma_{0}^{\varepsilon}}|\partial_{t}g_{0}^{\varepsilon} v_{m,n}^{\varepsilon} |d\sigma dt\leq C,
\end{equation}

\begin{equation}\label{ge28}
\varepsilon^{\beta}\int_{0}^{t}\int_{\Gamma_{0}^{\varepsilon}}|\partial_{t}g_{b_{0}}^{\varepsilon} v_{m}^{\varepsilon} |d\sigma dt\leq C.
\end{equation}
Furthermore, we have
\begin{equation}\label{ge29}
\begin{aligned}
\int_{0}^{t}\int_{\mathcal{B_{L}^{\varepsilon}}}(D_{L}-\varepsilon^{\beta}D_{M})\nabla u_{b}\cdot n_{l}\partial_{t}v_{l,n}^{\varepsilon}d\sigma dt&=\int_{\mathcal{B_{L}^{\varepsilon}}}(D_{L}-\varepsilon^{\beta}D_{M})\nabla u_{b}\cdot n_{l}v_{l,n}^{\varepsilon}d\sigma\\
&-\int_{0}^{t}\int_{\mathcal{B_{L}^{\varepsilon}}}(D_{L}-\varepsilon^{\beta}D_{M})\nabla \partial_{t}u_{b}\cdot n_{l}v_{l,n}^{\varepsilon}d\sigma dt\\
&-\int_{\mathcal{B_{L}^{\varepsilon}}}(D_{L}-\varepsilon^{\beta}D_{M})\nabla u_{b}(0)\cdot n_{l}v_{l,n}^{\varepsilon}(0)d\sigma,
\end{aligned}
\end{equation}
and
\begin{equation}\label{ge30}
\begin{aligned}
\int_{0}^{t}\int_{\mathcal{B_{R}^{\varepsilon}}}(D_{R}-\varepsilon^{\beta}D_{M})\nabla u_{b}\cdot n_{r}\partial_{t}v_{r,n}^{\varepsilon}&d\sigma dt=\int_{\mathcal{B_{R}^{\varepsilon}}}(D_{R}-\varepsilon^{\beta}D_{M})\nabla u_{b}\cdot n_{r}v_{r,n}^{\varepsilon}d\sigma\\
&-\int_{0}^{t}\int_{\mathcal{B_{R}^{\varepsilon}}}(D_{R}-\varepsilon^{\beta}D_{M})\nabla \partial_{t}u_{b}\cdot n_{r}v_{r,n}^{\varepsilon}d\sigma dt\\
&-\int_{\mathcal{B_{R}^{\varepsilon}}}(D_{R}-\varepsilon^{\beta}D_{M})\nabla u_{b}(0)\cdot n_{r}v_{r,n}^{\varepsilon}(0)d\sigma.
\end{aligned}
\end{equation}
Now, using \eqref{dinf}, Cauchy-Schwarz's inequality, Young's inequality \ref{assump6} and \eqref{l5}, we get
\begin{equation}\label{ge31}
\begin{aligned}
\int_{\mathcal{B_{L}^{\varepsilon}}}|(D_{L}-\varepsilon^{\beta}D_{M})\nabla u_{b}\cdot n_{l}v_{l,n}^{\varepsilon}|d\sigma&\leq C\int_{\mathcal{B_{L}^{\varepsilon}}}|\nabla u_{b}||v_{l,n}^{\varepsilon}|d\sigma\\
&\leq C(t)+C \|\nabla v_{l,n}^{\varepsilon}\|_{L^{2}(\Omega_{\mathcal{L}}^{\varepsilon})}^{2}
\end{aligned}
\end{equation}

\begin{equation}\label{ge32}
\begin{aligned}
\int_{\mathcal{B_{R}^{\varepsilon}}}|(D_{R}-\varepsilon^{\beta}D_{M})\nabla u_{b}\cdot n_{r}v_{r,n}^{\varepsilon}|d\sigma\leq C(t)+C \|\nabla v_{r,n}^{\varepsilon}\|_{L^{2}(\Omega_{\mathcal{R}}^{\varepsilon})}^{2}
\end{aligned}
\end{equation}
\begin{equation}\label{ge34}
\begin{aligned}
\int_{0}^{t}\int_{\mathcal{B_{L}^{\varepsilon}}}|(D_{L}-\varepsilon^{\beta}D_{M})\nabla \partial_{t}&u_{b}\cdot n_{l}v_{l,n}^{\varepsilon}|d\sigma dt\\
&\leq C\|\nabla\partial_{t}u_{b}\|_{L^{2}(0,t;L^{2}(\Omega_{\mathcal{L}}^{\varepsilon}))}\|\nabla v_{l,n}^{\varepsilon}\|_{L^{2}(0,t;L^{2}(\Omega_{\mathcal{L}}^{\varepsilon}))}\\
&\leq C\|\nabla\partial_{t}u_{b}\|_{L^{2}(0,t;L^{2}(\Omega_{\mathcal{L}}^{\varepsilon}))}^{2}+C\|\nabla v_{l,n}^{\varepsilon}\|_{L^{2}(0,t;L^{2}(\Omega_{\mathcal{L}}^{\varepsilon}))}^{2}\\
&\leq C,
\end{aligned}
\end{equation}
and
\begin{equation}\label{gee34}
\int_{0}^{t}\int_{\mathcal{B_{R}^{\varepsilon}}}|(D_{R}-\varepsilon^{\beta}D_{M})\nabla \partial_{t}u_{b}\cdot n_{r}v_{l,n}^{\varepsilon}|d\sigma dt\leq C.
\end{equation}
Using \ref{assump4} leads to
\begin{equation}\label{gee35}
\begin{aligned}
\int_{\mathcal{B_{R}^{\varepsilon}}}|(D_{R}-\varepsilon^{\beta}D_{M})\nabla u_{b}(0)\cdot n_{r}v_{r,n}^{\varepsilon}(0)|d\sigma&\leq C\int_{\mathcal{B_{R}^{\varepsilon}}}|\nabla u_{b}(0)||h_{b_{r},n}^{\varepsilon}|d\sigma\\
&\leq C +C\|\nabla h_{b_{r},n}^{\varepsilon}\|_{L^{2}(\Omega_{\mathcal{R}}^{\varepsilon})}^{2}\\
&\leq C,
\end{aligned}
\end{equation}
where $h_{b_{r},n}^{\varepsilon}:=\sum_{k=0}^{n}d_{r,k}^{\varepsilon}(0)w_{r,k}^{\varepsilon}$ and by using $d_{r,k}^{\varepsilon}(0)=\int_{\Omega_{\mathcal{L}}^{\varepsilon}}h_{b_{r},n}^{\varepsilon}w_{l,k}^{\varepsilon}dx $, we get \\$\| h_{b_{r},n}^{\varepsilon}\|_{H^{1}(\Omega_{\mathcal{R}}^{\varepsilon})}^{2}\leq \| h_{b_{r}}^{\varepsilon}\|_{H^{1}(\Omega_{\mathcal{R}}^{\varepsilon})}^{2}$,
\begin{equation}\label{ge35}
\begin{aligned}
\int_{\mathcal{B_{L}^{\varepsilon}}}|(D_{L}-\varepsilon^{\beta}D_{M})\nabla u_{b}(0)\cdot n_{l}v_{m,n}^{\varepsilon}(0)|d\sigma&\leq C.
\end{aligned}
\end{equation}

Using \eqref{dinf}, \ref{assump4} and \ref{assump6}, allow us to write

\begin{equation}\label{}
\begin{aligned}
\frac{1}{2}\int_{\Omega_{\mathcal{L}}^{\varepsilon}}|D_{L}\nabla v_{l,n}(0) ^{\varepsilon}\nabla v_{l,n}^{\varepsilon}(0)| dx&\leq C\int_{\Omega_{\mathcal{L}}^{\varepsilon}}|\nabla h_{b_{l},n}^{\varepsilon}|dx\\
&\leq C,
\end{aligned}
\end{equation}

\begin{equation}\label{}
\begin{aligned}
\frac{1}{2}\int_{\Omega_{\mathcal{R}}^{\varepsilon}}|D_{R}\nabla v_{r,n}(0) ^{\varepsilon}\nabla v_{r,n}^{\varepsilon}(0)| dx\leq C.
\end{aligned}
\end{equation}
Using the structure of $P_{\delta}(\cdot)$, there exist a $k\in \mathbb{R}$ such that $P_{\delta}(k)=0$. Now we use Mean Value Theorem, \eqref{a22}, we get
\begin{equation}\label{}
\frac{\varepsilon^{\beta}}{2}\int_{\Omega_{\mathcal{M}}^{\varepsilon}}D_{M}^{\varepsilon}\nabla v_{m,n}(0) ^{\varepsilon}\nabla v_{m,n}^{\varepsilon}(0) dx\leq C,
\end{equation}

\begin{equation}\label{}
\begin{aligned}
\int_{\Omega_{\mathcal{R}}^{\varepsilon}}|B_{R}P_{\delta}(v_{r,n}^{\varepsilon}(0)-u_{b}(0))\nabla v_{r,n}^{\varepsilon}(0)| dx&=\int_{\Omega_{\mathcal{R}}^{\varepsilon}}|B_{L}P_{\delta}(h_{b_{r}}^{\varepsilon}-u_{b}(0))\nabla h_{b_{r},n}^{\varepsilon}| dx\\
&\leq C\int_{\Omega_{\mathcal{R}}^{\varepsilon}}|h_{b_{r}}^{\varepsilon}-u_{b}(0)-k||\nabla h_{b_{r},n}^{\varepsilon}| dx\\
&\leq C,
\end{aligned}
\end{equation}
and
\begin{equation}\label{}
\begin{aligned}
\int_{\Omega_{\mathcal{L}}^{\varepsilon}}|B_{L}P_{\delta}(v_{l,n}^{\varepsilon}(0)-u_{b}(0))\nabla v_{l,n}^{\varepsilon}(0)| dx&\leq C,
\end{aligned}
\end{equation}

\begin{equation}\label{gel1}
\varepsilon^{\gamma}\int_{\Omega_{\mathcal{M}}^{\varepsilon}}|B_{M}P_{\delta}(v_{m,n}^{\varepsilon}(0)-u_{b}(0))\nabla v_{m,n}^{\varepsilon}(0)| dx\leq C.
\end{equation}
By using similar arguments of \eqref{ge21}, \eqref{ge22}, \eqref{ge23} and \eqref{ge24} together with (A5), we get
\begin{align}\label{}
\int_{\Gamma_{h}^{\varepsilon}\cap \partial\Omega_{\mathcal{L}}^{\varepsilon}}|g_{b_{l}}(0)v_{l,n}^{\varepsilon}(0)| d\sigma&\leq C,\\
\int_{\Gamma_{h}^{\varepsilon}\cap \partial\Omega_{\mathcal{R}}^{\varepsilon}}|g_{b_{r}}(0)v_{r,n}^{\varepsilon}(0) |d\sigma&\leq C,\\
\varepsilon^{\xi}\int_{\Gamma_{0}^{\varepsilon}}|g_{0}^{\varepsilon}(0) v_{m,n}^{\varepsilon}(0)| d\sigma&\leq C,\\
\varepsilon^{\beta}\int_{\Gamma_{0}^{\varepsilon}}|g_{b_{0}}(0) v_{m,n}^{\varepsilon} (0)|d\sigma&\leq C.\label{gen1}
\end{align}

Choosing $\eta>0$ small enough and  using ellipticity condition together with \eqref{ge10}-\eqref{gen1}, we obtain for $t=T$ the bound

\begin{multline}\label{gel2}
\|\partial_{t}v_{l,n}^{\varepsilon}\|_{L^{2}(0,T;L^{2}(\Omega_{\mathcal{L}}^{\varepsilon}))}^{2}+\|\partial_{t}v_{r,n}^{\varepsilon}\|_{L^{2}(0,T;L^{2}(\Omega_{\mathcal{R}}^{\varepsilon}))}^{2}+\varepsilon^{\alpha}\|\partial_{t}v_{m,n}^{\varepsilon}\|_{L^{2}(0,T;L^{2}(\Omega_{\mathcal{M}}^{\varepsilon}))}^{2}\\ 
\leq C+C(t)+C \left(	\|\nabla v_{l}^{\varepsilon}\|_{L^{2}(\Omega_{\mathcal{L}}^{\varepsilon})}^{2}+	\|\nabla v_{r,n}^{\varepsilon}\|_{L^{2}(\Omega_{\mathcal{R}}^{\varepsilon})}^{2}+\varepsilon^{\beta}\|\nabla v_{m,n}^{\varepsilon}\|_{L^{2}(\Omega_{\mathcal{M}}^{\varepsilon})}^{2}\right).
\end{multline}
From \eqref{gel2}, we can observe that, the time dependent constant $C(t)$ is a consequence of \eqref{ge22}, \eqref{ge23}, \eqref{ge31} and \eqref{ge32}. So, by using \ref{assump3} and \ref{assump4} we get $\int_{0}^{T}C(t)\leq C$.
Now, integrating \eqref{gel2} again from $0$ to $T$ with respect to $t$, and using \eqref{e2}, we get

\begin{equation}\label{gel3}
\|\partial_{t}v_{l,n}^{\varepsilon}\|_{L^{2}(0,T;L^{2}(\Omega_{\mathcal{L}}^{\varepsilon}))}^{2}+\|\partial_{t}v_{r,n}^{\varepsilon}\|_{L^{2}(0,T;L^{2}(\Omega_{\mathcal{R}}^{\varepsilon}))}^{2}+\varepsilon^{\alpha}\|\partial_{t}v_{m,n}^{\varepsilon}\|_{L^{2}(0,T;L^{2}(\Omega_{\mathcal{M}}^{\varepsilon}))}^{2}
\leq C.
\end{equation}

Now as an application of Aubin-Lions compactness lemma, we get 

\begin{equation}\label{gel4}
\|\partial_{t}v_{l}^{\varepsilon}\|_{L^{2}(0,T;L^{2}(\Omega_{\mathcal{L}}^{\varepsilon}))}^{2}+\|\partial_{t}v_{r}^{\varepsilon}\|_{L^{2}(0,T;L^{2}(\Omega_{\mathcal{R}}^{\varepsilon}))}^{2}+\varepsilon^{\alpha}\|\partial_{t}v_{m}^{\varepsilon}\|_{L^{2}(0,T;L^{2}(\Omega_{\mathcal{M}}^{\varepsilon}))}^{2}
\leq C.
\end{equation}

Hence we proved \eqref{e3}

\end{proof}

\subsection{Extension to fixed domain}\label{efd}
\begin{lemma}\label{L3}
	If $v_{m}^{\varepsilon}\in H^{1}(\Omega_{\mathcal{M}}^{\varepsilon})$, then there exists an extension of $v_{m}^{\varepsilon}$ to
	$H^{1}\left((-\varepsilon, \varepsilon)\times (0,h)\right)$ denote as $\tilde{v}_{m}^{\varepsilon}$ satisfying the following inequality
	\begin{equation}
	    \|\tilde{v}_{\varepsilon}^{m}\|_{H^{1}\left((-\varepsilon, \varepsilon)\times (0,h)\right)}\leq C\|v_{\varepsilon}^{m}\|_{H^{1}(\Omega_{\mathcal{M}}^{\varepsilon})}.
	\end{equation}
	\begin{proof}
		By using Theorem 9.7 of \cite{brezis2010functional} we can easly obtain the extension result for standard cell $Z$ with the inequality
		\begin{equation}\label{l2e1}
		\|\tilde{v}_{\varepsilon}^{m}\|_{H^{1}(Y)}\leq C \|v\|_{H^{1}(Z)} ,
		\end{equation}
		for some constant $C$.
	\\
	Now, using \eqref{l2e1}, we have
	\begin{align}\label{x1}
		\|\tilde{v}_{\varepsilon}^{m}\|_{H^{1}\left((-\varepsilon, \varepsilon)\times (0,h)\right)}&=\int_{(-\varepsilon, \varepsilon)\times (0,h)} |\tilde{v}_{\varepsilon}^{m}|^{2}+|\nabla\tilde{v}_{\varepsilon}^{m}|^{2} dx\\
		&=\sum_{0}^{h/\varepsilon}\int_{ke_{2}+\varepsilon Y} |\tilde{v}_{\varepsilon}^{m}|^{2}+|\nabla\tilde{v}_{\varepsilon}^{m}|^{2} dx\\
		&=\sum_{0}^{h/\varepsilon}\varepsilon^{2}\int_{ke_{2}+Y} |\tilde{v}_{\varepsilon}^{m}(\varepsilon x)|^{2}+\varepsilon^{2}|\nabla\tilde{v}_{\varepsilon}^{m}(\varepsilon x)|^{2} dx\\
		&\leq C \sum_{0}^{h/\varepsilon}\varepsilon^{2}\int_{ ke_{2}+Z} |{v}_{\varepsilon}^{m}(\varepsilon x)|^{2}+\varepsilon^{2}|\nabla{v}_{\varepsilon}^{m}(\varepsilon x)|^{2} dx\\
		&=C\sum_{0}^{h/\varepsilon}\int_{ ke_{2}+\varepsilon Z_{k}} |{v}_{\varepsilon}^{m}|^{2}+|\nabla{v}_{\varepsilon}^{m}|^{2} dx\\
		&=C\|v_{\varepsilon}^{m}\|_{H^{1}(\Omega_{\mathcal{M}}^{\varepsilon})}.
	\end{align}
	\end{proof}

\end{lemma}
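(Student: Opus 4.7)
}

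The plan is to build the extension cell-by-cell and then glue the pieces together, relying on a fixed extension operator on the reference cell $Z$ together with a careful scaling argument. First I would apply a classical extension result (for instance Theorem 9.7 in \cite{brezis2010functional}, or the standard construction by reflection through the Lipschitz boundary $\partial Y_0$) to obtain a bounded linear extension operator
$E\colon H^1(Z) \to H^1(Y)$
with an operator-norm constant $C_0$ depending only on the geometry of $Z$ (in particular, on the Lipschitz character of $\partial Y_0$). This step is purely geometric and does not involve $\varepsilon$; it is the source of the constant $C$ that will propagate through the rest of the argument.

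Next I would exploit the periodic structure of $\Omega_{\mathcal{M}}^{\varepsilon}$. Since $\Omega_{\mathcal{M}}^{\varepsilon}$ is the union over $k = 0, 1, \dots, h/\varepsilon - 1$ of the translated and scaled cells $ke_2 + \varepsilon Z$, I would define the extension $\tilde v_m^{\varepsilon}$ on each cell $ke_2 + \varepsilon Y$ by the rule
$\tilde v_m^{\varepsilon}(x) \;=\; \bigl(E [\,v_m^{\varepsilon}(\varepsilon\,\cdot + ke_2)\,]\bigr)\!\bigl((x - ke_2)/\varepsilon\bigr)$,
i.e.\ pull back onto the reference cell, apply $E$, and push forward. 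On $ke_2 + \varepsilon Z$ this coincides with $v_m^{\varepsilon}$, and on the inserted obstacle $ke_2 + \varepsilon Y_0$ it provides the new values. The pieces match across the horizontal interfaces $\{x_2 = k\varepsilon\}$ because $v_m^{\varepsilon}$ already has a well-defined trace there (since $v_m^{\varepsilon} \in H^1(\Omega_{\mathcal{M}}^{\varepsilon})$ and $\partial Y \cap Y_0 = \emptyset$ means the cells overlap only along the unperforated horizontal lines), so the glued function belongs to $H^1((-\varepsilon,\varepsilon)\times(0,h))$.

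It remains to sum the norm estimates. On each cell, the change of variables $y = (x - ke_2)/\varepsilon$ converts the $L^2$ part by a factor $\varepsilon^2$ and the gradient part by a factor $\varepsilon^2 \cdot \varepsilon^{-2} = 1$ (the Jacobian $\varepsilon^2$ multiplied by the chain-rule factor $\varepsilon^{-2}$ on $|\nabla|^2$), so that
\begin{equation*}
\|\tilde v_m^{\varepsilon}\|_{L^2(ke_2+\varepsilon Y)}^2 + \varepsilon^2 \|\nabla \tilde v_m^{\varepsilon}\|_{L^2(ke_2+\varepsilon Y)}^2
\;=\; \varepsilon^2 \bigl( \|E w\|_{L^2(Y)}^2 + \|\nabla E w\|_{L^2(Y)}^2 \bigr),
\end{equation*}
where $w$ denotes the pulled-back function. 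Applying the cell-wise bound $\|Ew\|_{H^1(Y)} \le C_0 \|w\|_{H^1(Z)}$ and rescaling back, then summing over $k = 0,\dots, h/\varepsilon - 1$, yields the desired estimate with a constant independent of $\varepsilon$.

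The main obstacle I anticipate is the careful bookkeeping of the $\varepsilon$-powers under the change of variables and verifying that the glued extension really lies in $H^1$ globally rather than only cell-by-cell; once the traces are shown to agree across $\{x_2 = k\varepsilon\}$ (which follows from the fact that the reference extension $E$ leaves the values on $\partial Y \setminus \partial Y_0$ untouched, modulo the standard reflection-type construction), the rest is routine.
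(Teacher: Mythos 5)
Your proposal takes essentially the same route as the paper: a fixed extension operator on the reference cell $Z$ (via Theorem 9.7 of \cite{brezis2010functional}), followed by translation--scaling to each cell $ke_{2}+\varepsilon Y$ and summation over $k$, with your added remarks on matching traces across $\{x_{2}=k\varepsilon\}$ being a welcome extra detail that the paper leaves implicit. One shared caveat: because the $L^{2}$ part and the gradient part rescale with different powers of $\varepsilon$, the combined bound $\|Ew\|_{H^{1}(Y)}\leq C_{0}\|w\|_{H^{1}(Z)}$ alone would produce a factor $\varepsilon^{-2}$ after pushing forward, so both your argument and the paper's really require an extension operator that controls $\|Ew\|_{L^{2}(Y)}$ by $\|w\|_{L^{2}(Z)}$ and $\|\nabla Ew\|_{L^{2}(Y)}$ by $\|\nabla w\|_{L^{2}(Z)}$ \emph{separately}, which is exactly what the construction in \cite{ACERBI1992481} (cited by the paper after the proof) provides.
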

To prove the above result we used a similar technique as used in \cite{ACERBI1992481}.
	\subsection{Two scale convergence for thin layer}\label{tsctl}
	Here we use Theorem \ref{T3} and obtain two scale limit of $v_{m}^{\varepsilon}$ as  $\varepsilon \rightarrow 0$  for the layer. We use two scale limit for layer definition similar to definition defined in  \cite{neuss2007effective} which is motivated from \cite{lukkassen2002two}. 
	\begin{definition}\label{D3}
		We define sequence of functions $v_{\varepsilon}^{m}\in L^{2}((0,T)\times \Omega_{\mathcal{M}}^{\varepsilon})$ two-scale converges to $v_{0}(t,\bar{x},y)\in L^{2}((0,T)\times \Sigma\times Z )$, If 
		\begin{equation}\label{}
		\lim_{\varepsilon \rightarrow 0}\frac{1}{\varepsilon}\int_{0}^{T}\int_{\Omega_{\mathcal{M}}^{\varepsilon}}v_{\varepsilon}(t,x)\psi (t,\bar{x},\frac{x}{\varepsilon})dxdt=\int_{0}^{T}\int_{\Sigma}\int_{Z}v_{0}(t,\bar{x},y)\psi (t,\bar{x},y)dydxdt,
		\end{equation}
		for all $\psi\in L^{2}((0,T)\times Z;C_{\#} (\overline{Z}))$,
		 where $\Sigma:=\{(0,x_{2})\in\Omega:x_{2}\in (0,h)\}$ and we denote the two-scale convergence of $v_{m}^{\varepsilon}$ to $v_{m}^{0}$ as $v_{m}^{\varepsilon}\overset{2-s}{\rightharpoonup}v_{m}^{0}$.
	\end{definition}
		\begin{definition}\label{D4}
		We define sequence of functions $v_{\varepsilon}^{m}\in L^{2}((0,T)\times \Gamma_{0}^{\varepsilon})$ two-scale converges to $v_{0}(t,\bar{x},y)\in L^{2}((0,T)\times \Sigma\times\partial Y_{0})$, If 
		\begin{equation}\label{}
		\lim_{\varepsilon \rightarrow 0}\frac{1}{\varepsilon}\int_{0}^{T}\int_{\Gamma_{0}^{\varepsilon}}v_{\varepsilon}(t,x)\psi (t,\bar{x},\frac{x}{\varepsilon})d\sigma_{x} dt=\int_{0}^{T}\int_{\Sigma}\int_{\partial{Y_{0}}}v_{0}(t,\bar{x},y)\psi (t,\bar{x},y)d\sigma_{y}d\bar{x}dt
		\end{equation}
			for all $\psi\in L^{2}((0,T)\times \Sigma;C_{\#} (\overline{\partial Y_{0}}))$.
	\end{definition}

\begin{theorem}\label{T4}
	For any sequence $v_{\varepsilon}^{m}\in  L^{2}((0,T)\times \Omega_{\mathcal{M}}^{\varepsilon})$ with the condition \begin{equation}\label{}
	\frac{1}{\varepsilon}\|v_{\varepsilon}^{m}\|_{L^{2}((0,T)\times \Omega_{\mathcal{M}}^{\varepsilon})}^{2}\leq C,
	\end{equation}
	for a constant $C$, we can find a subsequence, again denoted as $v_{\varepsilon}^{m}$, such that $v_{\varepsilon}^{m}$ two-scale converges to $v_{0}^{m}\in L^{2}((0,T)\times \Sigma\times Z)$.
\end{theorem}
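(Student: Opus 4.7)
The plan is to adapt the Nguetseng--Allaire compactness argument to the thin-layer setting, following \cite{neuss2007effective}. The strategy is to view the rescaled duality pairing
\begin{equation*}
T_\varepsilon(\psi) := \frac{1}{\varepsilon}\int_0^T\!\!\int_{\Omega_{\mathcal{M}}^\varepsilon} v_\varepsilon^m(t,x)\,\psi\bigl(t,\bar{x},\tfrac{x}{\varepsilon}\bigr)\,dx\,dt
\end{equation*}
as a bounded linear functional on the admissible test space $L^2((0,T)\times \Sigma; C_\#(\overline{Z}))$, extract a weak-$*$ limit $T_0$ via Banach--Alaoglu on this separable space, and represent $T_0$ through Riesz as integration against the desired $v_0^m \in L^2((0,T)\times \Sigma \times Z)$.

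The first key step is a uniform bound on $T_\varepsilon$. Cauchy--Schwarz reduces the task to verifying the mean-value identity
\begin{equation*}
\lim_{\varepsilon \to 0}\frac{1}{\varepsilon}\int_{\Omega_{\mathcal{M}}^\varepsilon}\bigl|\psi(t,\bar{x},x/\varepsilon)\bigr|^2 dx = \int_\Sigma\!\int_Z |\psi(t,\bar{x},y)|^2\,dy\,d\bar{x},
\end{equation*}
which I would establish by partitioning $\Omega_{\mathcal{M}}^\varepsilon = \bigcup_{k=0}^{h/\varepsilon-1}(k e_2 + \varepsilon Z)$, changing variables $y = (x_1, x_2 - k\varepsilon)/\varepsilon$ on each cell, and using continuity of $\psi$ in $\bar{x}$ together with dominated convergence. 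Combined with $\|v_\varepsilon^m\|_{L^2}^2 \leq C\varepsilon$, this gives $|T_\varepsilon(\psi)| \leq C\|\psi\|_{L^2((0,T)\times \Sigma \times Z)}$ for small $\varepsilon$. Separability of the test space then permits extracting a subsequence along which $T_\varepsilon(\psi) \to T_0(\psi)$ pointwise in $\psi$. Passing the bound to the limit, $T_0$ extends by density to a bounded linear functional on the Hilbert space $L^2((0,T)\times \Sigma \times Z)$, and Riesz representation furnishes $v_0^m$ realising Definition \ref{D3}.

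The delicate point is the mean-value identity itself, since the cells are punctured by the obstacles $Y_0$ and the test functions are only continuous in the microvariable, not smooth, so uniform continuity cannot be invoked naively. A robust way around this is to first apply the extension Lemma \ref{L3} to produce $\tilde{v}_\varepsilon^m$ on the full strip $(-\varepsilon,\varepsilon)\times (0,h)$ with $\|\tilde v_\varepsilon^m\|_{L^2}^2 \leq C\varepsilon$, then rescale horizontally by $y_1 = x_1/\varepsilon$ to obtain a bounded sequence $w_\varepsilon$ on the fixed cylinder $(-1,1)\times (0,h)$ whose oscillations are $\varepsilon$-periodic only in the vertical direction. The classical Nguetseng--Allaire theorem then applies on this fixed domain, and restriction of the resulting two-scale limit to $Z \subset Y$ yields the required $v_0^m$; the values of the extension inside the obstacles do not affect the limit pairing because the original sequence was supported on $\Omega_{\mathcal{M}}^\varepsilon$.
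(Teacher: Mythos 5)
Your argument is, in substance, the standard Nguetseng--Allaire compactness proof transplanted to the thin-layer pairing, which is exactly the content of the results the paper invokes by citation (Proposition 4.2 of \cite{neuss2007effective} and Theorem 4.4 of \cite{effectiveapratim}); the paper gives no independent proof, so your functional-analytic route --- uniform bound on $T_\varepsilon$ via the mean-value property of admissible test functions, diagonal extraction on a separable test space, Riesz representation --- is the intended one and is correct in outline. Two small corrections to your final paragraph: Lemma \ref{L3} is an $H^1$-extension result and cannot be applied to a sequence that is merely bounded in $L^2$ as in the hypothesis of Theorem \ref{T4}; extension by zero into the obstacles (or simply no extension, since the pairing in Definition \ref{D3} is taken over $\Omega_{\mathcal{M}}^{\varepsilon}$) is what you actually need, and it preserves the bound trivially. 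Moreover, extending $v_\varepsilon^m$ does nothing for the mean-value identity, which is a property of the test function alone; for $\psi\in L^2((0,T)\times\Sigma;C_\#(\overline{Z}))$ the map $y\mapsto\psi(t,\bar{x},y)$ is uniformly continuous on the compact set $\overline{Z}$ for a.e.\ $(t,\bar{x})$, so the cell-by-cell Riemann-sum argument you describe goes through directly (this is the usual admissibility statement, cf.\ \cite{lukkassen2002two}), and the detour through the rescaled fixed cylinder is unnecessary.
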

\begin{theorem}\label{T5}
	For any sequence $v_{\varepsilon}^{m}\in  L^{2}(\Gamma_{0}^{\varepsilon}\times(0,T))$ with the condition \begin{equation}\label{}
\|v_{\varepsilon}^{m}\|_{L^{2}(\Gamma_{0}^{\varepsilon}\times(0,T))}^{2}\leq C,
	\end{equation}
	for a constant $C$, we can find a subsequence, again denoted as $v_{\varepsilon}^{m}$, such that $v_{\varepsilon}^{m}$ two-scale converges to $v_{0}^{m}\in L^{2}(\Sigma\times \partial Y_{0}\times (0,T))$.
\end{theorem}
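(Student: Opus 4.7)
The plan is to mirror, at the surface level, the standard Allaire--Neuss-Radu two-scale compactness argument used for the bulk Theorem \ref{T4}, adapted here to the oscillating boundary $\Gamma_0^\varepsilon$. First, introduce the family of linear functionals
\[
T_\varepsilon(\psi) := \frac{1}{\varepsilon}\int_0^T \int_{\Gamma_0^\varepsilon} v_\varepsilon^m(t,x)\, \psi\!\left(t,\bar x,\tfrac{x}{\varepsilon}\right) d\sigma_x\, dt, \qquad \psi\in L^2\!\left((0,T)\times \Sigma;\, C_\#(\overline{\partial Y_0})\right),
\]
modelled directly on Definition \ref{D4}. The goal is to show that $\{T_\varepsilon\}$ is uniformly bounded in the topological dual of this test space; Banach--Alaoglu then yields a weakly-$*$ convergent subsequence $T_\varepsilon \rightharpoonup T$, and a standard density plus Riesz representation argument identifies $T$ with integration against the sought limit $v_0^m \in L^2((0,T)\times\Sigma\times\partial Y_0)$.

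The uniform bound on $T_\varepsilon$ is the analytic core of the argument. I would decompose
\[
\Gamma_0^\varepsilon = \bigcup_{k=0}^{k_0}\partial\!\left(k e_2 + \kappa(\varepsilon)[a_1,b_1]\times\varepsilon[a_2,b_2]\right),
\]
and on each piece perform the change of variables that maps the $k$-th scaled obstacle boundary back onto the reference surface $\partial Y_0$. This rescales the surface measure by the corresponding Jacobian and, using the $Y$-periodicity of $\psi$ in its third argument, collapses $\psi(\cdot,\cdot,x/\varepsilon)$ to $\psi(\cdot,\cdot,y)$. Summing over $k$ produces a Riemann sum in the vertical direction which, as $\varepsilon\to 0$, converges to an integral over $\Sigma$. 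Applying Cauchy--Schwarz in the original integral and inserting this cell-wise estimate together with the hypothesis $\|v_\varepsilon^m\|_{L^2(\Gamma_0^\varepsilon\times(0,T))}^2 \le C$ yields the desired bound $|T_\varepsilon(\psi)| \le C\|\psi\|$ uniformly in $\varepsilon$. A Lemma \ref{L3}-style extension to a fixed domain can be used as an auxiliary tool if needed to streamline the trace estimates on $\partial Y_0$.

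The hard part will be the bookkeeping of the $\varepsilon$-powers in the unfolding step: the surface-measure Jacobian on each scaled obstacle, the number of cells $k_0 \sim 1/\varepsilon$, and the prefactor $1/\varepsilon$ in Definition \ref{D4} must balance so as to produce a bound independent of $\varepsilon$. This has to be carried out separately for the finitely thin regime $\kappa(\varepsilon)=\kappa$ and the infinitely thin regime $\kappa(\varepsilon)=\varepsilon$, because the geometry of the obstacles, and hence the relevant Jacobians, differ between the two cases. A secondary, more technical subtlety is that the Riemann-sum approximation in the vertical variable requires some continuity of $\psi$ in $\bar x$, which is built into the choice of test space $L^2((0,T)\times\Sigma;\, C_\#(\overline{\partial Y_0}))$ and then transferred to arbitrary admissible test functions by a routine density argument.
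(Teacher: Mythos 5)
Your overall strategy is the standard one, and it coincides with what the paper itself does: the paper offers no proof of its own but delegates to Theorem 4.4 of \cite{effectiveapratim} and Proposition 4.2 of \cite{neuss2007effective}, which run exactly the functional-analytic scheme you describe (uniform boundedness of the pairing on a separable test space, Banach--Alaoglu, identification of the weak-$*$ limit via a Riesz-type argument, with the admissibility of test functions supplied by a cell-wise change of variables and a Riemann-sum passage in the vertical direction). So there is no divergence of method to report.

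There is, however, a genuine gap: the one step you explicitly defer --- ``the bookkeeping of the $\varepsilon$-powers'' --- is the entire content of the theorem, and as you have set it up it does not close. In the infinitely thin regime $\kappa(\varepsilon)=\varepsilon$ one has $k_0\sim h/\varepsilon$ obstacles, each with boundary measure $\sim\varepsilon\,|\partial Y_0|$, so $|\Gamma_0^\varepsilon|=O(1)$ and, by the very unfolding computation you propose, $\|\psi(\cdot,\cdot,x/\varepsilon)\|_{L^2(\Gamma_0^\varepsilon)}=O(1)$. Cauchy--Schwarz applied to your functional $T_\varepsilon$, which carries the prefactor $1/\varepsilon$ copied from Definition \ref{D4}, therefore yields only $|T_\varepsilon(\psi)|\le C\varepsilon^{-1}\|v_\varepsilon^m\|_{L^2(\Gamma_0^\varepsilon\times(0,T))}\,\|\psi(\cdot,\cdot,x/\varepsilon)\|_{L^2(\Gamma_0^\varepsilon\times(0,T))}\le C/\varepsilon$, which is not uniformly bounded under the stated hypothesis $\|v_\varepsilon^m\|^2_{L^2(\Gamma_0^\varepsilon\times(0,T))}\le C$. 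The consistent normalization for the surface pairing in the thin-layer setting (and the one used in Proposition 4.2 of \cite{neuss2007effective}) is $\int_{\Gamma_0^\varepsilon}v_\varepsilon^m\,\psi(t,\bar x,x/\varepsilon)\,d\sigma_x\,dt$ with no $1/\varepsilon$ prefactor; with that choice your Cauchy--Schwarz argument gives $|T_\varepsilon(\psi)|\le C\|\psi\|$ and the rest of your scheme goes through. You need to either adopt that normalization or strengthen the hypothesis to $\varepsilon^{-1}\|v_\varepsilon^m\|^2_{L^2(\Gamma_0^\varepsilon\times(0,T))}\le C$; as written, the proof fails at its analytic core. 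Two minor remarks: the separate treatment of the finitely thin regime is not needed for this statement, since a limit in $L^2(\Sigma\times\partial Y_0\times(0,T))$ only makes sense in the dimension-reduction case (the finitely thin case uses the classical notion from \cite{lukkassen2002two} with limit on $\Omega_{\mathcal{M}}\times\partial Y_0$); and the extension Lemma \ref{L3} plays no role here, since no trace of a bulk function is being taken --- the hypothesis already lives on $\Gamma_0^\varepsilon$.
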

\begin{proof}
	For proof of Theorem \ref{T4} and Theorem \ref{T5}, refer proof of Theorem 4.4 of \cite{effectiveapratim} and Proposition 4.2 of \cite{neuss2007effective}.
\end{proof}
\begin{theorem}\label{T6}
	Let $(v_{l}^{\varepsilon},v_{m}^{\varepsilon},v_{r}^{\varepsilon})$ be the weak solution of \eqref{wf}, then there exist\\ $(v_{l}^{0},v_{r}^{0})\in (L^{2}(0,T;H^{1}(\Omega_{\mathcal{L}})),L^{2}(0,T;H^{1}(\Omega_{\mathcal{R}})))$ such that
	\begin{align}
	\mathbbm{1}_{\Omega_{\mathcal{L}}^{\varepsilon}}v_{l}^{\varepsilon}\rightarrow v_{l}^{0}\hspace{2cm}\mbox{on}\hspace{1cm}L^{2}((0,T)\times\Omega_{\mathcal{L}}),\label{t51}\\
		\mathbbm{1}_{\Omega_{\mathcal{R}}^{\varepsilon}}v_{r}^{\varepsilon}\rightarrow v_{r}^{0}\hspace{2cm}\mbox{on}\hspace{1cm}L^{2}((0,T)\times\Omega_{\mathcal{R}}),\label{t52}\\
	\mathbbm{1}_{\Omega_{\mathcal{L}}^{\varepsilon}}v_{l}^{\varepsilon}(t,x_{1},-\varepsilon)\rightarrow v_{l}^{0}(t,x_{1},0)\hspace{2cm}\mbox{on}\hspace{1cm}L^{2}((0,T)\times\Omega_{\mathcal{L}}),\label{t53}\\
			\mathbbm{1}_{\Omega_{\mathcal{R}}^{\varepsilon}}v_{r}^{\varepsilon}(t,x_{1},\varepsilon)\rightarrow v_{r}^{0}(t,x_{1},0)\hspace{2cm}\mbox{on}\hspace{1cm}L^{2}((0,T)\times\Omega_{\mathcal{R}}),\label{t54}\\
			\mathbbm{1}_{\Omega_{\mathcal{L}}^{\varepsilon}}\nabla v_{l}^{\varepsilon}\overset{w}\rightharpoonup \nabla v_{l}^{0}\hspace{2cm}\mbox{on}\hspace{1cm}L^{2}((0,T)\times\Omega_{\mathcal{L}}),\label{t55}\\	
				\mathbbm{1}_{\Omega_{\mathcal{R}}^{\varepsilon}}\nabla v_{r}^{\varepsilon}\overset{w}\rightharpoonup \nabla v_{r}^{0}\hspace{2cm}\mbox{on}\hspace{1cm}L^{2}((0,T)\times\Omega_{\mathcal{R}}),\label{t56}\\
				\mathbbm{1}_{\Omega_{\mathcal{L}}^{\varepsilon}}\partial_{t}v_{l}^{\varepsilon}\overset{w}\rightharpoonup \partial_{t}v_{l}^{0}\hspace{2cm}\mbox{on}	\hspace{1cm}L^{2}(0,T;L^{2}(\Omega_{\mathcal{L}})),\label{t57}\\
					\mathbbm{1}_{\Omega_{\mathcal{R}}^{\varepsilon}}\partial_{t}v_{r}^{\varepsilon}\overset{w}\rightharpoonup \partial_{t}v_{r}^{0}\hspace{2cm}\mbox{on}	\hspace{1cm}L^{2}(0,T;L^{2}(\Omega_{\mathcal{R}})),\label{t58}\\
					\mathbbm{1}_{\Omega_{\mathcal{L}}^{\varepsilon}}P_{\delta}(v_{l}^{\varepsilon}-u_{b})\rightarrow P_{\delta}(v_{l}^{0}-u_{b}) \hspace{2cm}\mbox{on}\hspace{1cm}L^{2}((0,T)\times\Omega_{\mathcal{L}}),\label{t59}\\
						\mathbbm{1}_{\Omega_{\mathcal{R}}^{\varepsilon}}P_{\delta}(v_{r}^{\varepsilon}-u_{b})\rightarrow P_{\delta}(v_{r}^{0}-u_{b}) \hspace{2cm}\mbox{on}\hspace{1cm}L^{2}((0,T)\times\Omega_{\mathcal{R}})\label{t510},
	\end{align}
	as $\varepsilon\rightarrow0$.
	\end{theorem}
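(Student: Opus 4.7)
The plan is to establish the convergences in Theorem \ref{T6} by combining the uniform \emph{a priori} bounds from Theorem \ref{T3} with a standard Aubin--Lions compactness argument performed on a fixed ambient domain, and then to handle the traces on the moving interfaces $\{x_1 = \pm\varepsilon\}$ via a direct fundamental theorem of calculus estimate.

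First I would note that $\Omega_{\mathcal{L}}^\varepsilon = (-\ell/2,-\kappa(\varepsilon)) \times (0,h)$ is monotone in $\varepsilon$ and contained in the fixed limit domain $\Omega_{\mathcal{L}} := (-\ell/2,0) \times (0,h)$, and similarly for $\Omega_{\mathcal{R}}^\varepsilon$. By Theorem \ref{T3} the sequence $\{v_l^\varepsilon\}$ is uniformly bounded in $L^2(0,T;H^1(\Omega_{\mathcal{L}}^\varepsilon))$ with $\{\partial_t v_l^\varepsilon\}$ uniformly bounded in $L^2(0,T;L^2(\Omega_{\mathcal{L}}^\varepsilon))$. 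Using an $\varepsilon$-independent $H^1$ extension (reflection across the flat boundary $\{x_1 = -\kappa(\varepsilon)\}$), I would produce $\tilde v_l^\varepsilon \in L^2(0,T;H^1(\Omega_{\mathcal{L}})) \cap H^1(0,T;L^2(\Omega_{\mathcal{L}}))$ with bounds inherited from Theorem \ref{T3}. The Aubin--Lions lemma, applied with the compact embedding $H^1(\Omega_{\mathcal{L}}) \hookrightarrow L^2(\Omega_{\mathcal{L}})$, then delivers (up to a subsequence) a strong limit $v_l^0 \in L^2(0,T;H^1(\Omega_{\mathcal{L}})) \cap H^1(0,T;L^2(\Omega_{\mathcal{L}}))$. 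Since $|\Omega_{\mathcal{L}} \setminus \Omega_{\mathcal{L}}^\varepsilon| = O(\varepsilon)\to 0$, restricting via $\mathbbm{1}_{\Omega_{\mathcal{L}}^\varepsilon}$ preserves the strong convergence, giving \eqref{t51}; the symmetric argument gives \eqref{t52}. The weak convergences \eqref{t55}--\eqref{t58} follow directly from the uniform bounds and the identification of limits via the strong $L^2$ convergence just obtained.

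For the moving-interface trace convergences \eqref{t53}--\eqref{t54}, I would split
$$v_l^\varepsilon(t,-\varepsilon,x_2) - v_l^0(t,0,x_2) = \bigl(v_l^\varepsilon(t,-\varepsilon,x_2) - \tilde v_l^\varepsilon(t,0,x_2)\bigr) + \bigl(\tilde v_l^\varepsilon(t,0,x_2) - v_l^0(t,0,x_2)\bigr).$$
The second summand tends to $0$ in $L^2((0,T)\times(0,h))$ via continuity of the trace $H^1(\Omega_{\mathcal{L}})\to L^2(\{0\}\times(0,h))$ combined with the Aubin--Lions strong convergence in an intermediate space $L^2(0,T;H^s(\Omega_{\mathcal{L}}))$ for some $s\in(\tfrac12,1)$. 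The first summand is estimated by the fundamental theorem of calculus and Cauchy--Schwarz,
$$\bigl|v_l^\varepsilon(t,-\varepsilon,x_2) - \tilde v_l^\varepsilon(t,0,x_2)\bigr|^2 = \left|\int_{-\varepsilon}^{0} \partial_{x_1}\tilde v_l^\varepsilon(t,s,x_2)\,ds\right|^2 \leq \varepsilon \int_{-\varepsilon}^{0}|\partial_{x_1}\tilde v_l^\varepsilon|^2\,ds,$$
so that integrating in $(t,x_2)$ yields a bound of order $\varepsilon\|\nabla \tilde v_l^\varepsilon\|^2_{L^2((0,T)\times\Omega_{\mathcal{L}})} = O(\varepsilon)\to 0$. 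The argument for $v_r^\varepsilon$ at $x_1=\varepsilon$ is symmetric.

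Finally, for \eqref{t59}--\eqref{t510} I would exploit that $P_\delta = \rho_\delta * P$ is a smooth function with globally bounded derivative (depending on $\delta$ but not on $\varepsilon$), and therefore globally Lipschitz. Hence
$$\|P_\delta(v_l^\varepsilon - u_b) - P_\delta(v_l^0 - u_b)\|_{L^2((0,T)\times\Omega_{\mathcal{L}})} \leq L_\delta \|v_l^\varepsilon - v_l^0\|_{L^2((0,T)\times\Omega_{\mathcal{L}})} \to 0$$
by \eqref{t51}, which gives \eqref{t59}; \eqref{t510} follows by the same argument. The main obstacle I anticipate is the trace step \eqref{t53}--\eqref{t54}: only the combination of an $\varepsilon$-uniform extension, the continuity of the trace operator on the fixed limit domain, and the quantitative $O(\sqrt\varepsilon)$ fundamental-theorem estimate yields strong $L^2$ convergence of the trace on the moving boundary to the trace of $v_l^0$ on the limit interface; a softer compactness argument would deliver only weak trace convergence, which is not enough for the subsequent two-scale passage.
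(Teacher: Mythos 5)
Your proposal follows essentially the same route as the paper: uniform bounds from Theorem \ref{T3} plus Lemmas \ref{L1}--\ref{L2} and Aubin--Lions compactness on a fixed extended domain for \eqref{t51}--\eqref{t58} (the paper delegates these details, including the moving-interface trace step you spell out via the fundamental theorem of calculus, to Proposition 2.1 of the cited Neuss-Radu--J\"ager paper), and the Lipschitz property of $P_{\delta}$ combined with \eqref{t51} for \eqref{t59}--\eqref{t510}. The one point where the paper is more careful than your writeup is the last step: since \eqref{t59} concerns $\mathbbm{1}_{\Omega_{\mathcal{L}}^{\varepsilon}}P_{\delta}(v_{l}^{\varepsilon}-u_{b})$ rather than $P_{\delta}(\tilde v_{l}^{\varepsilon}-u_{b})$, one must separately dispose of the contribution of $\mathbbm{1}_{\Omega_{\mathcal{L}}\backslash\Omega_{\mathcal{L}}^{\varepsilon}}P_{\delta}(v_{l}^{0}-u_{b})$ on the vanishing strip, which the paper does explicitly via Minkowski's inequality and monotone convergence; in your version this is a one-line fix using the boundedness of $P_{\delta}$ and $|\Omega_{\mathcal{L}}\setminus\Omega_{\mathcal{L}}^{\varepsilon}|=O(\varepsilon)$, but it should be stated.
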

\begin{proof}
	Proof of convergence \eqref{t51}, \eqref{t52}, \eqref{t55})-\eqref{t58} is application of\\
	Lemma \ref{L2}, Lemma \ref{L1} Theorem \ref{T3} and Lions-Aubin's compactness lemma (see \cite{aubin1963analyse}). For details of the proof see Proposition 2.1 in \cite{neuss2007effective}.
To prove convergence result \eqref{t59} we use the following estimate 
	\begin{equation}\label{t59p}
	\begin{aligned}
\|\mathbbm{1}_{\Omega_{\mathcal{L}}^{\varepsilon}}P_{\delta}(v_{l}^{\varepsilon}-u_{b})-&P_{\delta}(v_{l}^{0}-u_{b})\|_{L^{2}(0,T;L^{2}(\Omega_{\mathcal{L}}))}\\
=&\|\mathbbm{1}_{\Omega_{\mathcal{L}}^{\varepsilon}}P_{\delta}(v_{l}^{\varepsilon}-u_{b})-\mathbbm{1}_{\Omega_{\mathcal{L}}^{\varepsilon}}P_{\delta}(v_{l}^{0}-u_{b})\\
&\hspace{2cm}-\mathbbm{1}_{\Omega_{\mathcal{L}}\backslash \Omega_{\mathcal{L}}^{\varepsilon}}P_{\delta}(v_{l}^{0}-u_{b})\|_{L^{2}(0,T;L^{2}(\Omega_{\mathcal{L}}))}\\
&\leq \|\mathbbm{1}_{\Omega_{\mathcal{L}}^{\varepsilon}}P_{\delta}(v_{l}^{\varepsilon}-u_{b})-\mathbbm{1}_{\Omega_{\mathcal{L}}^{\varepsilon}}P_{\delta}(v_{l}^{0}-u_{b})\|_{L^{2}(0,T;L^{2}(\Omega_{\mathcal{L}}))}\\
&\hspace{2cm}+\|\mathbbm{1}_{\Omega_{\mathcal{L}}\backslash \Omega_{\mathcal{L}}^{\varepsilon}}P_{\delta}(v_{l}^{0}-u_{b})\|_{L^{2}(0,T;L^{2}(\Omega_{\mathcal{L}}))}\\
&\leq C	\|\mathbbm{1}_{\Omega_{\mathcal{L}}^{\varepsilon}}(v_{l}^{\varepsilon}-v_{l}^{0})\|_{L^{2}(0,T;L^{2}(\Omega_{\mathcal{L}}))}
\\
&\hspace{2cm}+\|\mathbbm{1}_{\Omega_{\mathcal{L}}\backslash \Omega_{\mathcal{L}}^{\varepsilon}}P_{\delta}(v_{l}^{0}-u_{b})\|_{L^{2}(0,T;L^{2}(\Omega_{\mathcal{L}}))},
	\end{aligned}
	\end{equation}
		to get the  inequality \eqref{t59p} we used the structure of $P_{\delta}$ operator, Mean Value Theorem and Minkowski's inequality. As a consequence of Monotone Convergence Theorem, we have 
		\begin{equation}\label{mce}
		\|\mathbbm{1}_{\Omega_{\mathcal{L}}\backslash \Omega_{\mathcal{L}}^{\varepsilon}}P_{\delta}(v_{l}^{0}-u_{b})\|_{L^{2}(0,T;L^{2}(\Omega_{\mathcal{L}}))}\rightarrow 0
	\end{equation}
	as $\varepsilon\rightarrow 0$.
	Now, using  \eqref{t51}, \eqref{t59p} and \eqref{mce} as $\varepsilon\rightarrow0$ we can conclude $	\mathbbm{1}_{\Omega_{\mathcal{L}}^{\varepsilon}}P_{\delta}(v_{l}^{\varepsilon}-u_{b})\rightarrow P_{\delta}(v_{l}^{0}-u_{b})$ strongly in $L^{2}((0,T)\times\Omega_{\mathcal{L}})$ as $\varepsilon\rightarrow0$.\\
	Similarly we can prove \eqref{t510}.
\end{proof}
\begin{theorem}\label{T7}
	Let $(v_{l}^{\varepsilon},v_{m}^{\varepsilon},v_{r}^{\varepsilon})$ be a weak solution of \eqref{wf}. Then there exists $v_{m}^{0}\in L^{2}((0,T)\times \Sigma; H^{1}_{\#}(Z))$ such that upto a subsequence, it holds 
	\begin{align}
	v_{m}^{\varepsilon}&\overset{2-s}{\rightharpoonup}v_{m}^{0},\label{t61}\\
		\partial_{t}v_{m}^{\varepsilon}&\overset{2-s}{\rightharpoonup} \partial_{t}v_{m}^{0}, \label{tt61}\\
	\varepsilon \nabla v_{m}^{\varepsilon}&\overset{2-s}{\rightharpoonup} \nabla_{y}v_{m}^{0}, \label{t62}\\
	\varepsilon P_{\delta}(v_{m}^{\varepsilon}-u_{b})&\overset{2-s}{\rightharpoonup} 0, \label{t63}
	\end{align}
	as $\varepsilon\rightarrow 0.$
\end{theorem}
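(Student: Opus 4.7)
The plan is to combine the energy estimates from Theorem \ref{T3}, the extension result of Lemma \ref{L3}, and the thin-layer two-scale compactness of Theorem \ref{T4} to first extract a subsequence with the four stated convergences and then identify the limits. First, I would verify that the scalings in assumption \ref{assump6} together with \eqref{e1}--\eqref{e3} provide the bounds
\begin{equation*}
\frac{1}{\varepsilon}\|v_m^\varepsilon\|^2_{L^2((0,T)\times\Omega_{\mathcal{M}}^\varepsilon)} + \frac{1}{\varepsilon}\|\varepsilon\nabla v_m^\varepsilon\|^2_{L^2((0,T)\times\Omega_{\mathcal{M}}^\varepsilon)} + \frac{1}{\varepsilon}\|\partial_t v_m^\varepsilon\|^2_{L^2((0,T)\times\Omega_{\mathcal{M}}^\varepsilon)} \leq C,
\end{equation*}
which are exactly those required by Theorem \ref{T4}. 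After extending $v_m^\varepsilon$ to the full strip $(-\varepsilon,\varepsilon)\times(0,h)$ via Lemma \ref{L3} so as to fill in the obstacles $\Omega_0^\varepsilon$, the compactness yields a (non-relabelled) subsequence along which $v_m^\varepsilon \overset{2-s}{\rightharpoonup} v_m^0$, $\partial_t v_m^\varepsilon \overset{2-s}{\rightharpoonup} \chi$, and $\varepsilon\nabla v_m^\varepsilon \overset{2-s}{\rightharpoonup} \eta$ for some functions in $L^2((0,T)\times\Sigma\times Z)$.

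Next, I would identify $\chi=\partial_t v_m^0$ by testing the two-scale convergence of $\partial_t v_m^\varepsilon$ against separable test functions $\psi(t,\bar{x},y)=\varphi(t,\bar{x})\rho(y)$ with $\varphi \in C^\infty_c((0,T)\times \Sigma)$ and $\rho \in C^\infty_\#(\overline{Z})$, integrating by parts in $t$, and using the two-scale convergence of $v_m^\varepsilon$ itself. To identify $\eta=\nabla_y v_m^0$ and simultaneously obtain $v_m^0(t,\bar{x},\cdot)\in H^1_\#(Z)$, I would adapt the classical Nguetseng argument: test $\varepsilon\nabla v_m^\varepsilon$ against $y$-divergence-free vector fields $\Phi(t,\bar{x},y) \in C^\infty_c((0,T)\times\Sigma; C^\infty_\#(\overline{Z};\mathbb{R}^2))$ via integration by parts on $\Omega_{\mathcal{M}}^\varepsilon$. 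The boundary terms on $\Gamma_0^\varepsilon$ vanish after passing to the limit thanks to periodicity and the thin-layer scaling, leaving $\int \eta\cdot\Phi\, dy\, d\bar x\, dt = 0$, which forces $\eta=\nabla_y \hat v$ for some $\hat v\in L^2((0,T)\times\Sigma;H^1_\#(Z))$; the identification $\hat v=v_m^0$ is then obtained by testing against general (not divergence-free) $\Phi$.

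The last convergence $\varepsilon P_\delta(v_m^\varepsilon-u_b)\overset{2-s}{\rightharpoonup} 0$ is the easiest: the mollification $P_\delta$ of the compactly-supported polynomial $P$ is globally bounded, $\|P_\delta\|_{L^\infty(\mathbb{R})}\leq C$, so that for any admissible $\psi \in L^2((0,T)\times\Sigma; C_\#(\overline{Z}))$ one has
\begin{equation*}
\frac{1}{\varepsilon}\int_0^T\!\!\int_{\Omega_{\mathcal{M}}^\varepsilon}\varepsilon P_\delta(v_m^\varepsilon-u_b)\,\psi(t,\bar{x},x/\varepsilon)\,dx\,dt \leq C \|\psi\|_{L^\infty}\,|\Omega_{\mathcal{M}}^\varepsilon|\,T = O(\varepsilon)\to 0,
\end{equation*}
because $|\Omega_{\mathcal{M}}^\varepsilon|=O(\varepsilon)$ in the infinitely thin regime $\kappa(\varepsilon)=\varepsilon$; the two-scale limit must therefore vanish.

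The main obstacle is the Nguetseng-type identification $\eta=\nabla_y v_m^0$ in the perforated thin-layer geometry. Since $v_m^\varepsilon$ lives on $\Omega_{\mathcal{M}}^\varepsilon$ (a perforated thin strip), the admissible test fields must be periodic on $\partial Z$, and the $H^1$-extension provided by Lemma \ref{L3} is needed to make the integration-by-parts on the obstacle boundaries rigorous and to justify the vanishing of the boundary contributions in the two-scale limit. This is the standard perforated-domain ingredient, here adapted to the vanishing-width layer via the two-scale notion of Definition \ref{D3}.
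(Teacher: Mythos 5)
Your proposal is correct and follows essentially the same route as the paper: the a priori estimates of Theorem \ref{T3} feed into the thin-layer two-scale compactness of Theorem \ref{T4} (with the extension of Lemma \ref{L3}), and the identification of the limits of $\partial_{t}v_{m}^{\varepsilon}$ and $\varepsilon\nabla v_{m}^{\varepsilon}$ is exactly the argument the paper delegates to Propositions 2.1--2.2 of \cite{neuss2007effective}, which you spell out. The only cosmetic difference is \eqref{t63}: you bound the two-scale pairing directly by $O(\varepsilon)$ using $\|P_{\delta}\|_{L^{\infty}}$ and $|\Omega_{\mathcal{M}}^{\varepsilon}|=O(\varepsilon)$, whereas the paper first extracts a two-scale limit $w$ of $P_{\delta}(v_{m}^{\varepsilon}-u_{b})$ from the bound $\tfrac{1}{\varepsilon}\|P_{\delta}(v_{m}^{\varepsilon}-u_{b})\|_{L^{2}(\Omega_{\mathcal{M}}^{\varepsilon})}^{2}\leq C$ and then multiplies by $\varepsilon\to 0$ --- both are one-line arguments, and both share with the paper the implicit reliance on the scaling $\alpha=-1$, $\beta=1$ to turn \eqref{e1}--\eqref{e3} into the $\tfrac{1}{\varepsilon}$-normalized bounds required by Theorem \ref{T4}.
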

\begin{proof}
	To prove \eqref{t61} and \eqref{t62} we use Theorem \ref{T3} and Theorem \ref{T4}. For details we refer Proposition 2.1 and Proposition 2.2 of \cite{neuss2007effective}. Using Theorem \ref{T3} we have 
	\begin{equation}\label{t6e1}
	\frac{1}{\varepsilon}\|P_{\delta}(v_{m}^{\varepsilon}-u_{b})\|_{L^{2}(\Omega_{\mathcal{M}}^{\varepsilon})}^{2}\leq C.
	\end{equation}
	Now using Theorem \ref{T4} for \eqref{t6e1}, we have 
	\begin{equation}\label{}
	P_{\delta}(v_{m}^{\varepsilon}-u_{b})\overset{2-s}{\rightharpoonup} w ,
	\end{equation}
	where $w\in L^{2}((0,T)\times \Sigma\times Z)$. Consequently, we get $	\varepsilon P_{\delta}(v_{m}^{\varepsilon}-u_{b})\overset{2-s}{\rightharpoonup} 0$.
\end{proof}

	\section{Macroscopic model }\label{macro}
		In this section we derive upscaled equations and effective transmission conditions and coefficients  for  a variable selection  of scalings depending on the small parameter $\varepsilon$; see Table \ref{table1}.
	
	\begin{center}
	\begin{table}[ht]
	\caption{List of discussed scalings.}\label{table1}
		\begin{tabular}{ |p{3cm}|p{3cm}|  }
			\hline
			\multicolumn{2}{|c|}{Scaling options for infinitely thin layer}\\
			\hline
			Choice S1 & Choice S2 \\
			\hline
			$\alpha=-1$ & $\alpha=-1$   \\
			$\beta=1$&$\beta\in (0,1)$\\
			 $\gamma\geq 1$&$\gamma\geq \beta$\\
			  $\xi\geq \frac{1}{2}$& $\xi\geq \min\{\beta-\frac{1}{2},0\}$\\
					\hline
		\end{tabular}
		\label{tablen1}
		\quad
			\begin{tabular}{ |p{3cm}|p{4cm}|  }
			\hline
			\multicolumn{2}{|c|}{Scaling options for finitely thin layer}\\
			\hline
			Choice S3 &Choice S4 \\
			\hline
			$\alpha\in (-1,\infty)$ &$\alpha\in (-1,\infty)$\\
		$\beta-\alpha=2$&$\beta-\alpha\in (1,\infty)\backslash\{2\}$\\
		$\gamma-\alpha\geq1$&$\gamma-\alpha\geq1$\\
		 $\xi-\alpha\geq 1$& $\xi-\alpha \geq 1$\\
			
			\hline
		\end{tabular}
		\label{tablen2}
		\end{table}
		\end{center}
		In Fig. \ref{fig4} and Fig. \ref{fig5}, we sketch the basic thin layers geometries we are handling here.
		\begin{figure}[ht]
		\begin{center}
			\begin{tikzpicture}
			\draw (0,0) node [anchor=north] {{\scriptsize $-\ell/2$}} to (6,0) node [anchor=north] {{\scriptsize $+\ell/2$}}  to (6,3.75) to (0,3.75) to (0,0);
			\draw (3,0)node [anchor=north] {{\scriptsize $0$}}   to (3,3.75)  ;
			\draw [->](.75,3.5) node[anchor=west] {{\scriptsize $\Gamma_{\mathcal{L}} $}} to (0,2.75) ;
			\draw[->](0,3.75) to (0,5) node[anchor=west] {{\scriptsize $e_{2}$}};
			\draw[->](6,0) to (7,0) node[anchor=west] {{\scriptsize $e_{1}$}};
			\draw[->](.5,1) node[anchor=west] {{\scriptsize $\Gamma_{h}$}} to (1,0);
			\draw [<->] (6.1,0) to (6.1,3.75);
			\draw (6.1,2) node[anchor=west] {{\scriptsize $h$}};
			\draw (1.25,2) node[anchor=west] {{\scriptsize $\Omega_{\mathcal{L}}$}};
			\draw (4.75,2) node[anchor=west] {{\scriptsize $\Omega_{\mathcal{R}}$}};
			\draw[->](5.25,3.5) node[anchor=west] {{\scriptsize $\Gamma_{h}$}} to (5.00,3.75);
			\draw[->] (4.24,3) node[anchor=west] {{\scriptsize $\Sigma$}} to (3,3.25);
			\end{tikzpicture}
			\caption{Schematic representation of the macroscopic model for infinitely thin layer. }
			\label{fig4}
		\end{center}
	\end{figure}
\quad
\begin{figure}[ht]
	\begin{center}
		\begin{tikzpicture}
		\draw (0,0) node [anchor=north] {{\scriptsize $-\ell/2$}} to (6,0) node [anchor=north] {{\scriptsize $+\ell/2$}}  to (6,3.75) to (0,3.75) to (0,0);
			\draw [fill](2.75,0) to (2.75,3.75) to(3.25,3.75) to (3.25,0) to (2.75,0);
		\draw [->](.75,3.5) node[anchor=west] {{\scriptsize $\Gamma_{\mathcal{L}} $}} to (0,2.75) ;
		\draw[->](0,3.75) to (0,5) node[anchor=west] {{\scriptsize $e_{2}$}};
		\draw[->](6,0) to (7,0) node[anchor=west] {{\scriptsize $e_{1}$}};
		\draw[->](.5,1) node[anchor=west] {{\scriptsize $\Gamma_{h}$}} to (1,0);
		\draw [<->] (6.1,0) to (6.1,3.75);
		\draw (2.75,0) node[anchor=north] {{\scriptsize $-\kappa$}};
		\draw (3.25,0) node[anchor=north] {{\scriptsize $+\kappa$}};
		\draw (6.1,2) node[anchor=west] {{\scriptsize $h$}};
		\draw (1.25,2) node[anchor=west] {{\scriptsize $\Omega_{\mathcal{L}}$}};
		\draw (4.75,2) node[anchor=west] {{\scriptsize $\Omega_{\mathcal{R}}$}};
		\draw[->](5.25,3.5) node[anchor=west] {{\scriptsize $\Gamma_{h}$}} to (5.00,3.75);
		\draw[->] (4.24,3) node[anchor=west] {{\scriptsize $\Omega_{\mathcal{M}}$}} to (3,3.25);
		\end{tikzpicture}
		\caption{Schematic representation of the macroscopic model for finitely thin layer. }
		\label{fig5}
	\end{center}
\end{figure}

\subsection{Macroscopic model for infinitely thin layer}\label{macmitl}
	\begin{theorem}\label{T8}
		Assume \ref{assump1}-\ref{assump7}. Then for scaling choice S1, $$(v_{l}^{\varepsilon},v_{m}^{\varepsilon},v_{r}^{\varepsilon})\in L^{2}(0,T;V_{\varepsilon})\cap H^{1}(0,T;L^{2}(\Omega_{\mathcal{L}}^{\varepsilon})\times L^{2}(\Omega_{\mathcal{M}}^{\varepsilon})\times L^{2}(\Omega_{\mathcal{R}}^{\varepsilon}) ),$$  satisfying $(P_{\varepsilon})$ in the sense of Definition 2.1 converges to $$(v_{l}^{0},v_{m}^{0},v_{r}^{0})\in  (L^{2}(0,T;H^{1}(\Omega_{\mathcal{L}})),L^{2}((0,T)\times \Sigma; H^{1}_{\#}(Z)),L^{2}(0,T;H^{1}(\Omega_{\mathcal{R}})))$$ which satisfies the identity
		\begin{multline}\label{t7}
			\int_{0}^{T}	\int_{\Omega_{\mathcal{L}}} \partial_{t} v_{l}^{0}\phi_{1}dxdt+\int_{0}^{T}\int_{\Omega_{\mathcal{L}}}D_{L}\nabla v_{l}^{0}\nabla \phi_{1}dxdt\\
			- \int_{0}^{T}\int_{\Omega_{\mathcal{L}}}D_{L}B_{L}P_{\delta}(v_{l}^{0}-u_{b})\nabla \phi_{1} dxdt\\
			+	\int_{0}^{T}	\int_{\Omega_{\mathcal{R}}} \partial_{t} v_{r}^{0}\phi_{3}dxdt+\int_{0}^{T}\int_{\Omega_{\mathcal{R}}}D_{R}\nabla v_{l}^{0}\nabla \phi_{1}dxdt\\
			- \int_{0}^{T}\int_{\Omega_{\mathcal{R}}}B_{R}P_{\delta}(v_{r}^{0}-u_{b}) \phi_{3} dxdt\\
			+\int_{0}^{T}\int_{\Sigma}\int_{ Z}\partial_{t}v_{m}^{0}(t,\bar{x},y)\phi_{2}(t,\bar{x},y)dyd\bar{x}dt\\
			+ \int_{0}^{T}\int_{\Sigma}\int_{ Z}D_{M}(y)\nabla_{y}v_{m}^{0}(t,\bar{x},y)\nabla_{y}\phi_{2}(t,\bar{x},y)dyd\bar{x}dt\\
			=\int_{0}^{t}\int_{\Omega_{\mathcal{L}}}f_{b_{l}}\phi_{1} dx dt-\int_{0}^{t}\int_{\Gamma_{h}\cap \partial\Omega_{\mathcal{L}}}g_{b_{l}}\phi_{1}d\sigma dt+\int_{0}^{t}\int_{\Omega_{\mathcal{R}}}f_{b_{r}}\phi_{3} dx dt\\
			-\int_{0}^{t}\int_{\Gamma_{h}\cap \partial\Omega_{\mathcal{R}}}g_{b_{r}}\phi_{3}d\sigma dt
			+\int_{0}^{T}\int_{\Sigma}\int_{ Z} f_{a_{0}}(t,\bar{x},y)\phi_{2}(t,\bar{x},y)dyd\bar{x}dt\\
			\int_{0}^{T}\int_{\Sigma}\int_{ Z}D_{L}\nabla_{\bar{x}} u_{b}(t,\bar{x},0)\cdot n_{l}\phi_{2}(t,\bar{x},\bar{y},-1)dyd\bar{x} dt
			\\
			- \int_{0}^{T}\int_{\Sigma}\int_{ Z}D_{R}\nabla_{\bar{x}} u_{b}(t,\bar{x},0)\cdot n_{l}\phi_{2}(t,\bar{x},\bar{y},+1)dyd\bar{x} dt
		\end{multline}
		for all $(\phi_{1},\phi_{3} )\in L^{2}((0,T);H^{1}(\Omega_{\mathcal{L}};\Gamma_{\mathcal{L}}))\times  L^{2}((0,T);H^{1}(\Omega_{\mathcal{R}};\Gamma_{\mathcal{R}}))$ and $\phi_{2} \in L^{2}((0,T)\times \Sigma\times Z),$ along with the initial condition
		 \begin{equation}\label{t7ic}
		\begin{aligned}
		v_{l}^{0}(0,x)&=h_{b_{{l}}}^{0}(x)  \mbox{      for all       } x\in {\overline{\Omega}_{\mathcal{L}}^{\varepsilon}},\\
		v_{r}^{0}(0,x)&=h_{b_{r}}^{0}(x)  \mbox{      for all       } x\in {\overline{\Omega}_{\mathcal{R}}^{\varepsilon}},\\
		v_{m}^{0}(0,\bar{x},y)&=h_{b_{m}}^{0}(\bar{x},y)  \mbox{      for all       } (\bar{x},y)\in \Sigma\times \overline{Z},
		\end{aligned}
		\end{equation}
		where the limit function $(v_{l}^{0},v_{m}^{0},v_{r}^{0})$ are given in Theorem \ref{T6} and Theorem \ref{T7}.
	\end{theorem}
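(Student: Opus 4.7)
The plan is to pass to the limit $\varepsilon \to 0$ directly in the weak formulation \eqref{wf}, using the a priori bounds of Theorem \ref{T3} and the convergence results of Theorems \ref{T6} and \ref{T7}. Under the choice S1 we have $\alpha=-1$, $\beta=1$, $\gamma\geq 1$, $\xi\geq 1/2$, and this specific balance is what makes each term in \eqref{wf} converge to a nontrivial, finite limit of the type appearing in \eqref{t7}. I will test with functions $\phi=(\phi_1,\phi_2^\varepsilon,\phi_3)\in V_\varepsilon$ where $\phi_1\in C^1([0,T];C^\infty(\overline{\Omega_\mathcal{L}}))$ with $\phi_1|_{\Gamma_\mathcal{L}}=0$, similarly $\phi_3$, and $\phi_2^\varepsilon(t,x)=\phi_2(t,\bar{x},x/\varepsilon)$ with $\phi_2\in C^\infty((0,T)\times\Sigma;C^\infty_\#(\overline{Z}))$. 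The admissibility requirement $\phi_1=\phi_2^\varepsilon$ on $\mathcal{B}_\mathcal{L}^\varepsilon$ (and analogously on $\mathcal{B}_\mathcal{R}^\varepsilon$) is built into the choice by first working in the subclass where $\phi_2$ vanishes on $Z_L\cup Z_R$ and $\phi_1,\phi_3$ vanish near $\Sigma$, then removing this restriction by a density argument.

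The individual limit passages fall into three groups. In the bulk domains $\Omega_\mathcal{L}^\varepsilon,\Omega_\mathcal{R}^\varepsilon$ I use the strong $L^2$ convergence of $v_l^\varepsilon,v_r^\varepsilon$, the weak convergence of their gradients and time derivatives \eqref{t55}--\eqref{t58}, and the strong convergence \eqref{t59}--\eqref{t510} of $P_\delta(v_l^\varepsilon-u_b)$; these deliver the first two lines of \eqref{t7}. In the thin layer $\Omega_\mathcal{M}^\varepsilon$, I exploit that $\alpha=-1$ exactly matches the $1/\varepsilon$ factor in Definition \ref{D3}, so $\varepsilon^\alpha\int_{\Omega_\mathcal{M}^\varepsilon}\partial_t v_m^\varepsilon \phi_2^\varepsilon\,dx\to \int_\Sigma\int_Z\partial_t v_m^0 \phi_2\,dy\,d\bar{x}$ thanks to \eqref{tt61}. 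For the diffusion term I decompose $\nabla\phi_2^\varepsilon=\varepsilon^{-1}\nabla_y\phi_2+e_2\partial_{x_2}\phi_2$, so
\[
\varepsilon^\beta\int_{\Omega_\mathcal{M}^\varepsilon}D_M^\varepsilon \nabla v_m^\varepsilon\cdot\nabla\phi_2^\varepsilon\,dx
=\int_{\Omega_\mathcal{M}^\varepsilon}D_M^\varepsilon\nabla v_m^\varepsilon\cdot\nabla_y\phi_2\,dx
+\varepsilon\int_{\Omega_\mathcal{M}^\varepsilon}D_M^\varepsilon\nabla v_m^\varepsilon\cdot e_2\partial_{x_2}\phi_2\,dx;
\]
\eqref{t62} sends the first integral to $\int_\Sigma\int_Z D(y)\nabla_y v_m^0\cdot\nabla_y\phi_2\,dy\,d\bar{x}$, while the second is controlled by $\|\varepsilon\nabla v_m^\varepsilon\|_{L^2(\Omega_\mathcal{M}^\varepsilon)}\lesssim\sqrt\varepsilon$ from Theorem \ref{T3} and vanishes. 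The middle drift term is killed by $\gamma\geq 1$ combined with \eqref{t63}. The source term $\varepsilon^\alpha\int f_{a_m}^\varepsilon \phi_2^\varepsilon\,dx$ converges to $\int_\Sigma\int_Z f_{a_0}\phi_2\,dy\,d\bar{x}$ by \eqref{tsf}, while $\varepsilon^\beta\int f_{b_m}^\varepsilon\phi_2^\varepsilon\,dx$ vanishes. The surface integrals over $\Gamma_0^\varepsilon$, scaled by $\varepsilon^\xi$ and $\varepsilon^\beta$, disappear in the limit by Lemma \ref{LN1} and the assumptions $\xi\geq 1/2$, $\beta=1$ in \ref{assump4}.

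The delicate step is the treatment of the interface integrals on $\mathcal{B}_\mathcal{L}^\varepsilon$ and $\mathcal{B}_\mathcal{R}^\varepsilon$, which after the transmission condition $\phi_1=\phi_2^\varepsilon|_{\mathcal{B}_\mathcal{L}^\varepsilon}$ (resp.\ $\phi_3$ on $\mathcal{B}_\mathcal{R}^\varepsilon$) become
\[
\int_{\mathcal{B}_\mathcal{L}^\varepsilon}(D_L-\varepsilon D_M^\varepsilon)\nabla u_b\cdot n_l\,\phi_2\!\left(t,\bar{x},-1,\tfrac{x_2}{\varepsilon}\right)d\sigma
\quad\text{and the analogous integral on }\mathcal{B}_\mathcal{R}^\varepsilon.
\]
The $\varepsilon D_M^\varepsilon$ piece vanishes because $\nabla u_b$ is bounded. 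For the $D_L$ piece I view $\mathcal{B}_\mathcal{L}^\varepsilon=\{-\varepsilon\}\times(0,h)$ as a section of the thin layer that unfolds to $\Sigma\times Z_L$ after the change of variables $x_2=y_2$, and use a boundary two-scale convergence argument in the spirit of Proposition 4.2 of \cite{neuss2007effective} together with $\partial_t u_b\in L^2(0,T;H^1)$ from \ref{assump7} to pass to the limit; this produces the last two terms of \eqref{t7}. The initial condition \eqref{t7ic} follows from \ref{assump5} (convergences \eqref{A51}--\eqref{A53}) together with the continuity in time of $(v_l^\varepsilon,v_m^\varepsilon,v_r^\varepsilon)$ guaranteed by estimate \eqref{e3}. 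The main obstacle is clearly the careful bookkeeping of the interface terms and the density step that lifts the auxiliary restriction on the class of test functions; everything else reduces to a standard two-scale limit passage combined with the strong convergence of the regularised drift $P_\delta$.
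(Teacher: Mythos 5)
Your proposal follows the same route as the paper's proof of Theorem \ref{T8}: integrate the weak formulation \eqref{wf} in time, test with $\varepsilon$-independent $\phi_1,\phi_3$ and an oscillating $\phi_2(t,\bar{x},x/\varepsilon)$, and pass to the limit term by term using the energy estimates of Theorem \ref{T3}, the bulk convergences of Theorem \ref{T6}, the two-scale convergences of Theorem \ref{T7}, and the scaling S1 to decide which terms survive. The bookkeeping (diffusion gradient split into its $\nabla_x$ and $\varepsilon^{-1}\nabla_y$ parts, drift killed by $\gamma\geq 1$ together with \eqref{t63}, the $\Gamma_0^{\varepsilon}$-integrals vanishing, the interface integrals producing the last two terms of \eqref{t7}) matches the paper's computation, as does the recovery of the initial condition from \ref{assump5} via integration by parts in time.

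The one place where you genuinely deviate is the device you propose for handling the coupling $\phi_1=\phi_2^{\varepsilon}$ on $\mathcal{B}_{\mathcal{L}}^{\varepsilon}$: first restricting to $\phi_2$ vanishing on $Z_L\cup Z_R$ and $\phi_1,\phi_3$ vanishing near $\Sigma$, then ``removing the restriction by density''. As stated this step fails: the closure in $L^{2}(0,T;H^{1}(\Omega_{\mathcal{L}};\Gamma_{\mathcal{L}}))$ of the functions vanishing near $\Sigma$ is the proper subspace of functions with zero trace on $\Sigma$, so no density argument in a topology strong enough to control the trace integrals $\int_{\mathcal{B}_{\mathcal{L}}^{\varepsilon}}(D_L-\varepsilon D_M)\nabla u_b\cdot n_l\,\phi_1\,d\sigma$ can recover a general $\phi_1$, while in a weaker topology those very integrals do not pass to the limit. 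You also do not actually use the restriction later, since you compute the interface limits with $\phi_2(t,\bar{x},\pm 1,\cdot)\neq 0$. The paper sidesteps the issue by simply prescribing $\phi_1=\psi^{\varepsilon}$ on $\mathcal{B}_{\mathcal{L}}^{\varepsilon}$ and $\phi_3=\psi^{\varepsilon}$ on $\mathcal{B}_{\mathcal{R}}^{\varepsilon}$ (itself loose for a fixed $\phi_1$ against an oscillating $\psi^{\varepsilon}$); the standard rigorous fix is to correct $\phi_1$ in an $o(1)$ neighbourhood of the interface so that it matches $\phi_2^{\varepsilon}$ there, the correction being negligible in every limit you take. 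Apart from this auxiliary point, on which the paper is no more careful than you are, your argument is the paper's argument.
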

\begin{proof}
We integrate the weak formulation  \eqref{wf} from $0$ to $T$ and choose  $$(\phi_{1},\phi_{3})\in  L^{2}((0,T);H^{1}(\Omega_{\mathcal{L}}^{\varepsilon};\Gamma_{\mathcal{L}}))\times  L^{2}((0,T);H^{1}(\Omega_{\mathcal{R}}^{\varepsilon};\Gamma_{\mathcal{R}}))$$ and \\
$\phi_{2}=\psi^{\varepsilon}(t,x):=\psi(t,\bar{x},\frac{x}{\varepsilon})\in L^{2}\left((0,T)\times \Sigma ;C_{\#}^{\infty}(Z)\right)$ with $\phi_{1}=\psi^{\varepsilon}   \mbox{  on  } \mathcal{B_{L}^{\varepsilon}},\;\phi_{3}=\psi^{\varepsilon}\mbox{  on  } \mathcal{B_{R}^{\varepsilon}}$, we get
\begin{multline}
\int_{0}^{T}\int_{\Omega_{\mathcal{L}}^{\varepsilon}} \partial_{t} v_{l}^{\varepsilon}\phi_{1}dxdt
+\int_{0}^{T}\int_{\Omega_{\mathcal{L}}^{\varepsilon}}D_{L}\nabla v_{l} ^{\varepsilon}\nabla\phi_{1} dxdt
-\int_{0}^{T}\int_{\Omega_{\mathcal{L}}^{\varepsilon}}B_{L}P_{\delta}(v_{l}^{\varepsilon}-u_{b})\nabla \phi_{1} dxdt\\
+\int_{0}^{T}\int_{\Omega_{\mathcal{R}}^{\varepsilon}} \partial_{t} v_{r}^{\varepsilon}\phi_{3}dxdt
+\int_{0}^{T}\int_{\Omega_{\mathcal{R}}^{\varepsilon}}D_{R}\nabla v_{r} ^{\varepsilon}\nabla\phi_{3} dxdt
-\int_{0}^{T}\int_{\Omega_{\mathcal{R}}^{\varepsilon}}B_{R}P_{\delta}(v_{r}^{\varepsilon}-u_{b})\nabla \phi_{3} dxdt\\
+\varepsilon^{\alpha}  { \int_{0}^{T}\int_{\Omega_{\mathcal{M}}^{\varepsilon}} \partial_{t} v_{m}^{\varepsilon}\psi(t,\bar{x},\frac{x}{\varepsilon})dxdt}
\\
+ \varepsilon^{\beta}\int_{0}^{T}\int_{\Omega_{\mathcal{M}}^{\varepsilon}}D_{M}^{\varepsilon}(\frac{x}{\varepsilon})\nabla v_{m} ^{\varepsilon}\left( \nabla_{x}\psi(t,\bar{x},\frac{x}{\varepsilon}) +\frac{1}{\varepsilon}\nabla_{y}\psi(t,\bar{x},\frac{x}{\varepsilon})\right) dxdt\\
-\varepsilon^{\gamma}\int_{0}^{T}\int_{\Omega_{\mathcal{M}}^{\varepsilon}}B_{M}^{\varepsilon}(\frac{x}{\varepsilon})P_{\delta}(v_{m}^{\varepsilon}-u_{b})\left( \nabla_{x}\psi(t,\bar{x},\frac{x}{\varepsilon}) +\frac{1}{\varepsilon}\nabla_{y}\psi(t,\bar{x},\frac{x}{\varepsilon})\right)dxdt\\
=\int_{0}^{t} \int_{\Omega_{\mathcal{L}}^{\varepsilon}}f_{b_{l}}\phi_{1} dx dt
-\int_{0}^{T}\int_{\Gamma_{h}\cap \partial\Omega_{\mathcal{L}}^{\varepsilon}}g_{b_{l}} \phi_{1} d\sigma dt
+\int_{0}^{T} \int_{\Omega_{\mathcal{R}}^{\varepsilon}}f_{b_{r}}\phi_{3} dx dt\\
-\int_{0}^{T}\int_{\Gamma_{h}\cap \partial\Omega_{\mathcal{R}}^{\varepsilon}}g_{b_{r}} \phi_{3} d\sigma dt
\\
+\varepsilon^{\alpha}\int_{0}^{T}\int_{\Omega_{\mathcal{M}}^{\varepsilon}}f_{a_{m}}^{\varepsilon}\psi(t,\bar{x},\frac{x}{\varepsilon}) dxdt +\varepsilon^{\beta}\int_{0}^{T}\int_{\Omega_{\mathcal{M}}^{\varepsilon}}f_{b_{m}}\psi(t,\bar{x},\frac{x}{\varepsilon}) dxdt\\
-\varepsilon^{\xi}\int_{0}^{T}\int_{\Gamma_{0}^{\varepsilon}}g_{0}^{\varepsilon} \psi(t,\bar{x},\frac{x}{\varepsilon}) d\sigma dt-\varepsilon^{\beta}\int_{0}^{T}\int_{\Gamma_{0}^{\varepsilon}}g_{b_{0}}\psi(t,\bar{x},\frac{x}{\varepsilon}) d\sigma dt\\
+	\int_{0}^{T}\int_{\mathcal{B_{L}^{\varepsilon}}}(D_{L}-\varepsilon^{\beta}D_{M})\nabla u_{b}\cdot n_{l}\phi_{1}d\sigma dt+\int_{0}^{T}\int_{\mathcal{B_{R}^{\varepsilon}}}(D_{R}-\varepsilon^{\beta}D_{M})\nabla u_{b}\cdot n_{r}\phi_{3}d\sigma dt.
\end{multline}
Now, using \eqref{t51}, \eqref{t55}, \eqref{t57} and \eqref{t59} for $\varepsilon\rightarrow 0 $, we obtain 
\begin{multline}\label{t7e1}
\int_{0}^{T}\int_{\Omega_{\mathcal{L}}^{\varepsilon}} \partial_{t} v_{l}^{\varepsilon}\phi_{1}dxdt
+\int_{0}^{T}\int_{\Omega_{\mathcal{L}}^{\varepsilon}}D_{L}\nabla v_{l} ^{\varepsilon}\nabla\phi_{1} dxdt
\\
-\int_{0}^{T}\int_{\Omega_{\mathcal{L}}^{\varepsilon}}B_{L}P_{\delta}(v_{l}^{\varepsilon}-u_{b})\nabla \phi_{1} dxdt
-\int_{0}^{T} \int_{\Omega_{\mathcal{L}}^{\varepsilon}}f_{b_{l}}\phi_{1} dx dt
+\int_{0}^{T}\int_{\Gamma_{h}\cap \partial\Omega_{\mathcal{L}}^{\varepsilon}}g_{b_{l}} \phi_{1} d\sigma dt\\
=\int_{0}^{T}	\int_{\Omega_{\mathcal{L}}} \partial_{t} v_{l}^{0}\phi_{1}dxdt+\int_{0}^{T}\int_{\Omega_{\mathcal{L}}}D_{L}\nabla v_{l}^{0}\nabla \phi_{1}dxdt\\
- \int_{0}^{T}\int_{\Omega_{\mathcal{L}}}D_{L}B_{L}P_{\delta}(v_{l}^{0}-u_{b})\nabla \phi_{1} dxdt\\
-\int_{0}^{T}\int_{\Omega_{\mathcal{L}}}f_{b_{l}}\phi_{1} dx dt+\int_{0}^{T}\int_{\Gamma_{h}\cap \partial\Omega_{\mathcal{L}}}g_{b_{l}}\phi_{1}d\sigma dt.
\end{multline}
Similarly, using \eqref{t52}, \eqref{t56}, \eqref{t58} and \eqref{t510} for $\varepsilon\rightarrow 0$, we get 
\begin{multline}\label{t7e2}
\int_{0}^{T}\int_{\Omega_{\mathcal{R}}^{\varepsilon}} \partial_{t} v_{r}^{\varepsilon}\phi_{3}dxdt
+\int_{0}^{T}\int_{\Omega_{\mathcal{R}}^{\varepsilon}}D_{R}\nabla v_{r} ^{\varepsilon}\nabla\phi_{3} dxdt
-\int_{0}^{T}\int_{\Omega_{\mathcal{R}}^{\varepsilon}}B_{R}P_{\delta}(v_{r}^{\varepsilon}-u_{b})\nabla \phi_{r} dxdt\\
-\int_{0}^{T} \int_{\Omega_{\mathcal{R}}^{\varepsilon}}f_{b_{r}}\phi_{3} dx dt
+\int_{0}^{T}\int_{\Gamma_{h}\cap \partial\Omega_{\mathcal{R}}^{\varepsilon}}g_{b_{r}} \phi_{3} d\sigma dt\\
=\int_{0}^{T}	\int_{\Omega_{\mathcal{R}}} \partial_{t} v_{r}^{0}\phi_{3}dxdt+\int_{0}^{T}\int_{\Omega_{\mathcal{R}}}D_{R}\nabla v_{r}^{0}\nabla \phi_{3}dxdt\\
- \int_{0}^{T}\int_{\Omega_{\mathcal{L}}}D_{R}B_{R}P_{\delta}(v_{r}^{0}-u_{b})\nabla \phi_{3} dxdt\\
-\int_{0}^{T}\int_{\Omega_{\mathcal{R}}}f_{b_{l}}\phi_{3} dx dt+\int_{0}^{T}\int_{\Gamma_{h}\cap \partial\Omega_{\mathcal{R}}}g_{b_{l}}\phi_{3}d\sigma dt.
\end{multline}
Now for $\alpha=-1$, $\beta=1$, $\gamma\geq 1$, $\xi\geq \frac{1}{2}$ and $\varepsilon\rightarrow 0$, we use Theorem 6 and obtain
\begin{equation}\label{t7e3}
\frac{1}{\varepsilon}  \int_{0}^{T}\int_{\Omega_{\mathcal{M}}^{\varepsilon}} \partial_{t} v_{m}^{\varepsilon}\psi(t,\bar{x},\frac{x}{\varepsilon})dxdt\rightarrow \int_{0}^{T}\int_{\Sigma}\int_{ Z}\partial_{t}v_{m}^{0}(t,\bar{x},y)\psi(t,\bar{x},y)dyd\bar{x}dt,
\end{equation}
\begin{equation}\label{t7e4}
\frac{1}{\varepsilon}\int_{0}^{T}\int_{\Omega_{\mathcal{M}}^{\varepsilon}}D_{M}^{\varepsilon}(\frac{x}{\varepsilon})\varepsilon^{2}\nabla v_{m} ^{\varepsilon} \nabla_{x}\psi(t,\bar{x},\frac{x}{\varepsilon})  dxdt\rightarrow 0,
\end{equation}

\begin{multline}\label{t7e5}
\frac{1}{\varepsilon}\int_{0}^{T}\int_{\Omega_{\mathcal{M}}^{\varepsilon}}D_{M}^{\varepsilon}(\frac{x}{\varepsilon})\varepsilon\nabla v_{m} ^{\varepsilon}\nabla_{y}\psi(t,\bar{x},\frac{x}{\varepsilon})dxdt\\
\rightarrow  \int_{0}^{T}\int_{\Sigma}\int_{ Z}D_{M}(y)\nabla_{y}v_{m}^{0}(t,\bar{x},y)\nabla_{y}\psi(t,\bar{x},y)dyd\bar{x}dt,
\end{multline}

\begin{equation}\label{t7e6}
\varepsilon^{1+\gamma}\int_{0}^{T}\int_{\Omega_{\mathcal{M}}^{\varepsilon}}B_{M}^{\varepsilon}(\frac{x}{\varepsilon})P_{\delta}(v_{m}^{\varepsilon}-u_{b}) \nabla_{x}\psi(t,\bar{x},\frac{x}{\varepsilon}) dxdt\rightarrow 0,
\end{equation}
\begin{equation}\label{t7e7}
\varepsilon^{\gamma}\int_{0}^{T}\int_{\Omega_{\mathcal{M}}^{\varepsilon}}B_{M}^{\varepsilon}(\frac{x}{\varepsilon})P_{\delta}(v_{m}^{\varepsilon}-u_{b})\nabla_{y}\psi(t,\bar{x},\frac{x}{\varepsilon})dxdt\rightarrow 0.
\end{equation}
Using \eqref{tsf}, we have
\begin{equation}\label{t7e8}
\frac{1}{\varepsilon}\int_{0}^{T}\int_{\Omega_{\mathcal{M}}^{\varepsilon}}f_{a_{m}}^{\varepsilon}\psi(t,\bar{x},\frac{x}{\varepsilon}) dxdt\rightarrow  \int_{0}^{T}\int_{\Sigma}\int_{ Z} f_{a_{0}}(t,\bar{x},y)\psi(t,\bar{x},y)dyd\bar{x}dt,
\end{equation}
\begin{equation}\label{t7e9}
\begin{aligned}
\varepsilon \int_{0}^{T}\int_{\Omega_{\mathcal{M}}^{\varepsilon}}f_{b_{m}}&\psi(t,\bar{x},\frac{x}{\varepsilon}) dxdt
=\varepsilon \int_{0}^{T}\int_{\Omega_{\mathcal{M}}^{\varepsilon}}-\mathrm{div}(D_{M}(\frac{x}{\varepsilon})\nabla u_{b})\psi(t,\bar{x},\frac{x}{\varepsilon}) dxdt\\
&=\varepsilon \int_{0}^{T}\int_{\Omega_{\mathcal{M}}^{\varepsilon}}D_{M}(\frac{x}{\varepsilon})\nabla u_{b}\left( \nabla_{x}\psi(t,\bar{x},\frac{x}{\varepsilon}) +\frac{1}{\varepsilon}\nabla_{y}\psi(t,\bar{x},\frac{x}{\varepsilon})\right) dxdt\\
&=\varepsilon \int_{0}^{T}\int_{\Omega_{\mathcal{M}}^{\varepsilon}}\nabla u_{b}\left(D_{M}(\frac{x}{\varepsilon})\right)^{t}\left( \nabla_{x}\psi(t,\bar{x},\frac{x}{\varepsilon}) +\frac{1}{\varepsilon}\nabla_{y}\psi(t,\bar{x},\frac{x}{\varepsilon})\right) dxdt\\
&\rightarrow  0.
\end{aligned}
\end{equation}
Using \eqref{tsg}, we obtain
\begin{equation}\label{t7e10}
\varepsilon^{\xi}\int_{0}^{T}\int_{\Gamma_{0}^{\varepsilon}}g_{0}^{\varepsilon} \psi(t,\bar{x},\frac{x}{\varepsilon}) d\sigma dt \rightarrow 0,
\end{equation}

\begin{equation}\label{t7e11}
\begin{aligned}
\varepsilon\int_{0}^{T}\int_{\Gamma_{0}^{\varepsilon}}g_{b_{0}}\psi(t,\bar{x},\frac{x}{\varepsilon}) d\sigma dt&=\varepsilon\int_{0}^{T}\int_{\Gamma_{0}^{\varepsilon}}-\nabla u_{b}\left(D_{M}^{\varepsilon}(\frac{x}{\varepsilon})\right)^{t}\psi(t,\bar{x},\frac{x}{\varepsilon}) d\sigma dt\\
&\rightarrow 0.
\end{aligned}
\end{equation}
\begin{multline}\label{t7e12}
\int_{\mathcal{B_{L}^{\varepsilon}}}(D_{L}-\varepsilon D_{M})\nabla u_{b}\cdot n_{l}\phi_{1}d\sigma+\int_{\mathcal{B_{R}^{\varepsilon}}}(D_{R}-\varepsilon D_{M})\nabla u_{b}\cdot n_{r}\phi_{3}d\sigma\\
=\int_{\mathcal{B_{L}^{\varepsilon}}}(D_{L}-\varepsilon D_{M})\nabla u_{b}\cdot n_{l}\psi\sigma-\int_{\mathcal{B_{R}^{\varepsilon}}}(D_{R}-\varepsilon^{\beta}D_{M})\nabla u_{b}\cdot n_{l}\psi d\sigma.
\end{multline}

\begin{equation}\label{t7e13}
\begin{aligned}
\int_{0}^{T}\int_{\mathcal{B_{L}^{\varepsilon}}}(D_{L}-\varepsilon D_{M}(\frac{x}{\varepsilon}))&\nabla u_{b}\cdot n_{l}\psi(t,\bar{x},\frac{x}{\varepsilon})d\sigma dt\\
&-\int_{0}^{T}\int_{\mathcal{B_{R}^{\varepsilon}}}(D_{R}-\varepsilon D_{M}(\frac{x}{\varepsilon}))\nabla u_{b}\cdot n_{l}\psi(t,\bar{x},\frac{x}{\varepsilon})d\sigma dt\\
&\rightarrow \int_{0}^{T}\int_{\Sigma}\int_{ Z}D_{L}\nabla_{\bar{x}} u_{b}(t,\bar{x},0)\cdot n_{l}\psi(t,\bar{x},\bar{y},-1)d\bar{x} dt\\
&\hspace{1cm}- \int_{0}^{T}\int_{\Sigma}\int_{ Z}D_{R}\nabla_{\bar{x}} u_{b}(t,\bar{x},0)\cdot n_{l}\psi(t,\bar{x},\bar{y},+1)d\bar{x} dt.\\
\end{aligned}
\end{equation}
Combining \eqref{t7e2}-\eqref{t7e12} yields the desired result \eqref{t7}.
\par For deriving initial conditions, first we choose $\phi_{1}\in C_{c}^{\infty}(\Omega_{\mathcal{L}})$ and \\
$\Theta(t)\in C^{\infty}([0,T])$ with $\Theta(T)=0$, then
\begin{equation}\label{t7e14}
\begin{aligned}
\int_{\Omega_{\mathcal{L}}}v_{l}^{0}(0,x)\phi_{1}(x)\Theta(0) dx&=-\int_{0}^{T}\int_{\Omega_{\mathcal{L}}}\partial_{t} v_{l}^{0}(t,x)\phi_{1}(x)\Theta(t) dxdt\\
&\hspace{1cm}-\int_{0}^{T}\int_{\Omega_{\mathcal{L}}} v_{l}^{0}(t,x)\phi_{1}(x)\partial_{t}\Theta(t) dxdt\\
&\hspace{2cm}+\int_{\Omega_{\mathcal{L}}}v_{l}^{0}(T,x)\phi_{1}(x)\Theta(T) dx\\
&=-\lim_{\varepsilon\rightarrow0}\int_{0}^{T}\int_{\Omega_{\mathcal{L}}^{\varepsilon}}\partial_{t} v_{l}^{\varepsilon}(t,x)\phi_{1}(x)\Theta(t) dxdt\\
&\hspace{2cm}-\lim_{\varepsilon\rightarrow0}\int_{0}^{T}\int_{\Omega_{\mathcal{L}}^{\varepsilon}} v_{l}^{\varepsilon}(t,x)\phi_{1}(x)\partial_{t}\Theta(t) dxdt\\
&=\lim_{\varepsilon\rightarrow0}\int_{\Omega_{\mathcal{L}}^{\varepsilon}}h_{b_{l}}^{\varepsilon}(x)\phi_{1}(x)\Theta(0)dx\\
&=\int_{\Omega_{\mathcal{L}}}h_{b_{l}}^{0}(x)\phi_{1}(x)\Theta(0) dx,
\end{aligned}
\end{equation}
here we used the assumption \eqref{A51}.\\
Similarly, for $\phi_{3}\in C_{c}^{\infty}(\Omega_{\mathcal{R}})$ and using \eqref{A52}, we get
\begin{equation}\label{t7e15}
\int_{\Omega_{\mathcal{R}}}v_{r}^{0}(0,x)\phi_{3}(x)\Theta(0) dx=\int_{\Omega_{\mathcal{R}}}h_{b_{r}}^{0}(x)\phi_{3}(x)\Theta(0) dx,
\end{equation}
and for $\psi\in C_{c}^{\infty}(\Sigma;C_{\#}^{\infty}(Z))$ and using \eqref{A53}, we get
\begin{equation}\label{t7e16}
\int_{ \Sigma}\int_{Z}v_{m}^{0}(0,\bar{x},y)\psi(\bar{x},y)\Theta(0) d\bar{x}y=\int_{ \Sigma}\int_{Z}h_{b_{m}}^{0}(\bar{x},y)\psi(\bar{x},y)\Theta(0) d\bar{x}y.
\end{equation}
From \eqref{t7e14}, \eqref{t7e15} and \eqref{t7e16} we get the desired result \eqref{t7ic}.
\end{proof}
\begin{theorem}\label{T9}
	Assume \ref{assump1}-\ref{assump7}. Then for scaling choice S1, the limit functions \\ $(v_{l}^{0},v_{m}^{0},v_{r}^{0})$ which are given in Theorem \ref{T6} and Theorem \ref{T7} satisfies the following boundary conditions
	\begin{align}
	v_{l}^{0}(t,\bar{x},0)=v_{m}^{0}(t,\bar{x},y) \hspace{1cm}\mbox{for a.e }(t,\bar{x},y)\in (0,T)\times \Sigma \times Z_{L}\label{T81}\\
		v_{r}^{0}(t,\bar{x},0)=v_{m}^{0}(t,\bar{x},y) \hspace{1cm}\mbox{for a.e }(t,\bar{x},y)\in (0,T)\times \Sigma \times Z_{R}\label{T82}.
	\end{align}
\end{theorem}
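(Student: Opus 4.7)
The plan is to pass to the limit $\varepsilon\to 0$ in the matching identity $v_l^\varepsilon = v_m^\varepsilon$ on $\mathcal{B}_L^\varepsilon$ that is hard-wired into the definition of the solution space $V_\varepsilon$; I will treat the left interface in detail since the right one is symmetric. Fix $\psi \in C^\infty_c((0,T)\times\Sigma; C^\infty_\#(\overline{Z}))$ and observe that $\mathcal{B}_L^\varepsilon = \{-\varepsilon\}\times(0,h)$ rescales under $y = x/\varepsilon$ to a vertically periodic array of cell edges of type $Z_L = \{-1\}\times(0,1)$. Evaluating the identity $v_l^\varepsilon = v_m^\varepsilon$ against $\psi^\varepsilon(t, x_2) := \psi(t, x_2, -1, x_2/\varepsilon)$ and integrating over $(0,T)\times\mathcal{B}_L^\varepsilon$ yields
\[
\int_0^T \!\! \int_{\mathcal{B}_L^\varepsilon} v_l^\varepsilon \, \psi^\varepsilon \, d\sigma \, dt \;=\; \int_0^T \!\! \int_{\mathcal{B}_L^\varepsilon} v_m^\varepsilon \, \psi^\varepsilon \, d\sigma \, dt.
\]

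On the bulk side, the strong trace convergence \eqref{t53} of Theorem \ref{T6}, combined with the Riemann--Lebesgue-type weak convergence $\psi^\varepsilon \rightharpoonup \int_0^1 \psi(t, \bar{x}, -1, y_2)\,dy_2$ in $L^2((0,T)\times(0,h))$, delivers via the standard strong--weak pairing
\[
\int_0^T \!\! \int_{\mathcal{B}_L^\varepsilon} v_l^\varepsilon \, \psi^\varepsilon \, d\sigma \, dt \;\longrightarrow\; \int_0^T \!\! \int_\Sigma \!\! \int_{Z_L} v_l^0(t, \bar{x}, 0) \, \psi(t, \bar{x}, y) \, d\sigma_y \, d\bar{x} \, dt.
\]
On the layer side, I will need the two-scale trace statement
\[
\int_0^T \!\! \int_{\mathcal{B}_L^\varepsilon} v_m^\varepsilon \, \psi^\varepsilon \, d\sigma \, dt \;\longrightarrow\; \int_0^T \!\! \int_\Sigma \!\! \int_{Z_L} v_m^0(t, \bar{x}, y) \, \psi(t, \bar{x}, y) \, d\sigma_y \, d\bar{x} \, dt.
\]
The route I plan to follow combines the $\varepsilon$-uniform bound of Lemma \ref{LN1} (applied on $\mathcal{B}_L^\varepsilon$ in place of $\Gamma_0^\varepsilon$), the two-scale limits \eqref{t61}--\eqref{t62} of Theorem \ref{T7}, and the extension Lemma \ref{L3}, to reduce the trace on $\mathcal{B}_L^\varepsilon$ cell by cell to a standard trace on $Z_L$.

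Equating the two limits and invoking the density of $\{\psi|_{\Sigma\times Z_L} : \psi \in C^\infty_c((0,T)\times\Sigma; C^\infty_\#(\overline{Z}))\}$ in $L^2((0,T)\times\Sigma\times Z_L)$, the fundamental lemma of the calculus of variations will yield $v_l^0(t, \bar{x}, 0) = v_m^0(t, \bar{x}, y)$ for a.e. $(t, \bar{x}, y) \in (0, T)\times\Sigma\times Z_L$, which is \eqref{T81}. The analogous computation on $\mathcal{B}_R^\varepsilon$, with \eqref{t54} and the cell edge $Z_R$ in place of \eqref{t53} and $Z_L$, will deliver \eqref{T82}.

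The hardest step will be the two-scale trace result on the layer side: the interface $\mathcal{B}_L^\varepsilon$ collapses to $x_1 = 0$ in the limit but rescales to a vertically periodic array of cell edges, so Theorem \ref{T5} (which is tailored to the oscillating obstacle boundary $\Gamma_0^\varepsilon$) does not apply directly. I will have to verify the corresponding compactness statement on a vertical slab and to exploit the cell-wise $H^1_\#(Z)$-regularity of $v_m^0$ furnished by Theorem \ref{T7} to make the trace on $Z_L$ meaningful; once this is in place, the rest of the argument is a routine strong--weak pairing.
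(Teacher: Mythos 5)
Your overall goal---passing to the limit in the matching condition $v_l^\varepsilon=v_m^\varepsilon$ on $\mathcal{B}_L^\varepsilon$ against an oscillating test function---is the right idea, and the bulk side of your pairing (strong trace convergence \eqref{t53} against the weakly convergent $\psi^\varepsilon$) is fine. But the step you yourself flag as hardest, the ``two-scale trace statement'' $\int_0^T\int_{\mathcal{B}_L^\varepsilon} v_m^\varepsilon\,\psi^\varepsilon \to \int_0^T\int_\Sigma\int_{Z_L} v_m^0\,\psi$, is a genuine gap and not a routine compactness verification. Lemma \ref{LN1} together with a Theorem \ref{T5}-type compactness result only gives you that the traces of $v_m^\varepsilon$ on $\mathcal{B}_L^\varepsilon$ two-scale converge, along a subsequence, to \emph{some} $w\in L^2((0,T)\times\Sigma\times Z_L)$; nothing in that argument identifies $w$ with the trace of $v_m^0$ on $Z_L$. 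The identification is exactly the content of the theorem, and the standard way to obtain it is the one the paper uses (following Theorem 4.2 of \cite{gahn2020singular}): take a test function $\psi$ with compact support in $Z\cup Z_L$, write $\int_0^T\int_\Sigma\int_Z \nabla_y v_m^0\,\psi = \lim_\varepsilon \frac{1}{\varepsilon}\int_0^T\int_{\Omega_{\mathcal{M}}^\varepsilon}\varepsilon\nabla v_m^\varepsilon\,\psi^\varepsilon$ using \eqref{t62}, integrate by parts at the $\varepsilon$-level so that the only surviving boundary term sits on $\mathcal{B}_L^\varepsilon$, replace $v_m^\varepsilon$ by $v_l^\varepsilon$ there via the transmission condition \eqref{tc2}, pass to the limit in that boundary term using the strong bulk trace convergence \eqref{t53}, and finally integrate by parts back on the cell $Z$ to produce $\int_{Z_L} v_m^0\,\psi\cdot n$. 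Comparing the two expressions gives $\int_{Z_L}(v_m^0-v_l^0(\cdot,0))\,\psi\cdot n=0$ and hence \eqref{T81}.

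In other words, your plan defers precisely the point where all the work lies, and the natural way to fill that hole collapses your argument into the paper's proof: the divergence structure (the two-scale convergence of $\varepsilon\nabla v_m^\varepsilon$ to $\nabla_y v_m^0$) is what ties the limit of the traces on $\mathcal{B}_L^\varepsilon$ to the trace of $v_m^0$ on $Z_L$, and it is invoked through an integration by parts rather than through a direct trace-compactness theorem on the vertical slab. If you want to keep your formulation, you must state and prove that trace-identification lemma, and its proof will be the integration-by-parts computation above; be careful also with the normalization of the surface integrals (the thin-layer two-scale pairing on $\Omega_{\mathcal{M}}^\varepsilon$ carries a factor $1/\varepsilon$, and the boundary term produced by the divergence theorem inherits the correct scaling automatically, whereas an ad hoc pairing on $\mathcal{B}_L^\varepsilon$ does not).
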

\begin{proof}
	To prove Theorem \ref{T9}, we use  same technique of Theorem 4.2 of \cite{gahn2020singular}. To prove \eqref{T81}, we choose $\psi\in C^{\infty}\left((0,T)\times\Sigma \times C_{\#}(\bar{Z})\right)$ such that $\psi(t,x,\cdot)$ has compact support in $Z_{L}\cup Z$
	Now, using integration by parts, Theorem 6 and \eqref{tc2}, we have
	\begin{equation}\label{}
	\begin{aligned}
\int_{0}^{T}\int_{ \Sigma}\int_{ Z}\nabla_{y}v_{m}^{0}&\psi dy d\bar{x}dt=\lim_{\varepsilon \rightarrow 0}\frac{1}{\varepsilon}\int_{0}^{T}\int_{\Omega_{\mathcal{M}}^{\varepsilon}}\varepsilon  \nabla v_{m}^{\varepsilon}\psi(t,x,\frac{x}{\varepsilon})dxdt\\
&=\lim_{\varepsilon \rightarrow 0}\left(\frac{-1}{\varepsilon}\int_{0}^{T}\int_{\Omega_{\mathcal{M}}^{\varepsilon}}v_{m}^{\varepsilon}\left(\nabla_{y}\psi(t,\bar{x},\frac{x}{\varepsilon})+\varepsilon \nabla_{x}\phi(t,x,\frac{x}{\varepsilon})\right)dxdt\right.\\
&\hspace{1.5cm}\left.+\int_{0}^{T}\int_{\mathcal{B_{L}^{\varepsilon}}}v_{m}^{\varepsilon}\psi(t,x,\frac{x}{\varepsilon})\cdot nd\sigma dt\right)\\
&=-\int_{0}^{T}\int_{ \Sigma}\int_{ Z}v_{m}^{0}\nabla_{y}\psi dy d\bar{x}dt\\
&\hspace{1cm}+\lim_{\varepsilon \rightarrow 0}\int_{0}^{T}\int_{\mathcal{B_{L}^{\varepsilon}}}v_{l}^{\varepsilon}\psi(t,x,\frac{x}{\varepsilon})\cdot nd\sigma dt\\
&=-\int_{0}^{T}\int_{ \Sigma}\int_{ Z}v_{m}^{0}\nabla_{y}\psi dy d\bar{x}dt\\
&\hspace{1cm}+\int_{0}^{T}\int_{ \Sigma}\int_{ Z_{L}}v_{l}^{0}\psi(t,\bar{x},y)\cdot n d\sigma d\bar{x}dt\\
&=\int_{0}^{T}\int_{ \Sigma}\int_{ Z}\nabla_{y}v_{m}^{0}\psi dy d\bar{x}dt-\int_{0}^{T}\int_{ \Sigma}\int_{ Z_{L}}v_{m}^{0}\psi(t,\bar{x},y)\cdot n d\sigma d\bar{x}dt\\
&\hspace{1cm}+\int_{0}^{T}\int_{ \Sigma}\int_{ Z_{L}}v_{l}^{0}\psi(t,\bar{x},y)\cdot n d\sigma d\bar{x}dt.\\
	\end{aligned}
	\end{equation}
	So, we obtain
	\begin{equation}\label{}
	\int_{0}^{T}\int_{ \Sigma}\int_{ Z_{L}}v_{m}^{0}\psi(t,\bar{x},y)\cdot n d\sigma d\bar{x}dt=\int_{0}^{T}\int_{ \Sigma}\int_{ Z_{L}}v_{l}^{0}\psi(t,\bar{x},y)\cdot n d\sigma d\bar{x}dt.
	\end{equation}
	which is equivalent to \eqref{T81}. Similarly by choosing test function from \\
	$C^{\infty}\left((0,T)\times \Sigma\times C_{\#}(\bar{Z})\right)$ such that $\psi(t,x,\cdot)$ has compact support in $Z_{R}\cup Z$ gives \eqref{T82}.
\end{proof}
\begin{theorem}\label{T11}
Assume \ref{assump1}-\ref{assump7}. Then for scaling choice S1, the limit function\\ $(v_{l}^{0},v_{m}^{0},v_{r}^{0})$ given in Theorem \ref{T6} and Theorem \ref{T7} is the weak solution of the following problem:
\begin{align}
v_{l}^{0}&\in L^{2}((0,T);H^{1}(\Omega_{\mathcal{L}}))\cap H^{1}((0,T);L^{2}(\Omega_{\mathcal{L}})) \\
v_{m}^{0}&\in L^{2}((0,T)\times\Sigma;H_{\#}^{1}({Z}))\cap H^{1}((0,T)\times \Sigma;L_{\#}^{2}(Z)) \\
v_{r}^{0}&\in L^{2}((0,T);H^{1}(\Omega_{\mathcal{R}}))\cap H^{1}((0,T);L^{2}(\Omega_{\mathcal{R}})) 
\end{align}
satisfying 
	\begin{equation}\label{meq}
\begin{aligned}
\frac{\partial v_{l}^{0}}{\partial t} +\mathrm{div}(-D_{L}\nabla v_{l}^{0}+ B_{L}P_{\delta}(v_{l}^{0}-u_{b}))&=f_{b_{l}}  &\mbox{on}  \ (0,T)\times\Omega_{\mathcal{L}} ,\\
\frac{\partial v_{r}^{0}}{\partial t} +\mathrm{div}(-D_{R}\nabla v_{r}^{0}+ B_{R}P_{\delta}(v_{r}^{0}-u_{b}))&=f_{b_{r}}  &\mbox{on}  \ \ (0,T)\times\Omega_{\mathcal{R}} ,\\
\end{aligned}
\end{equation}
	\begin{equation}\label{mbc1}
\begin{aligned}
v_{l}^{0}&=0   \mbox{       on      } (0,T)\times\Gamma_{\mathcal{L}}\\
v_{r}^{0}&=0  \mbox{       on      }(0,T)\times \Gamma_{\mathcal{R}}\\
\end{aligned}
\end{equation} 
\begin{equation}\label{mbc2}
\begin{aligned}
v_{l}^{0}(t,\bar{x},0)&=v_{m}^{0}(t,\bar{x},y)  \mbox{      for a.e     } (t,\bar{x},y)\in (0,T)\times \Sigma\times Z_{L},\\
	v_{r}^{0}(t,\bar{x},0)&=v_{m}^{0}(t,\bar{x},y) \mbox{     for a.e    }(t,\bar{x},y)\in (0,T)\times \Sigma \times Z_{R},\\
\end{aligned}
\end{equation} 
\begin{equation}\label{mbc3}
\begin{aligned}
(-D_{L}\nabla v_{l}^{0}+ B_{L}P_{\delta}(v_{l}^{0}-u_{b}))\cdot n_{l}&=  g_{b{_{l}}} \mbox{       on      } \left(\Gamma_{h}\cap \partial\Omega_{\mathcal{L}}\right)\times (0,T),\\
(-D_{R}\nabla v_{r}^{0}+ B_{R}P_{\delta}(v_{r}^{0}-u_{b}))\cdot n_{r}&=  g_{b_{{r}}} \mbox{       on      } \left(\Gamma_{h}\cap \partial\Omega_{\mathcal{R}}\right)\times (0,T),\\
\end{aligned}
\end{equation}
\begin{equation}\label{}
\begin{aligned}
v_{l}^{0}(0,x)&=h_{b_{l}}^{0}  \mbox{       on      } \overline{\Omega_{\mathcal{L}}}\\
v_{r}^{0}(0,x)&=h_{b_{r}}^{0}  \mbox{       on      } \overline{\Omega_{\mathcal{R}}}\\
\end{aligned}
\end{equation}
\begin{multline}\label{mbc4}
(-D_{L}\nabla v_{l}^{0}+ B_{L}P_{\delta}(v_{l}^{0}-u_{b})+D_{R}\nabla v_{r}^{0}- B_{R}P_{\delta}(v_{r}^{0}-u_{b}))\cdot n_{l}\\= \int_{ Z_{L}}D_{M}\nabla_{y}v_{m}^{0}\cdot n_{l}+D_{L}\nabla_{\bar{x}} u_{b}(t,\bar{x},0)\cdot n_{l}\\
\hspace{.05cm}- \int_{ Z_{R}}D_{M}\nabla_{y}v_{m}^{0}\cdot n_{l}+D_{R}\nabla_{\bar{x}} u_{b}(t,\bar{x},0)\cdot n_{l}\\
\mbox{on  }\hspace{2pt} (0,T) \times Z
\end{multline} 
and $v_{0}^{m}$ solves the following cell problem
\begin{equation}\label{}
\begin{aligned}
\frac{\partial v_{m}^{0}}{\partial t} +\mathrm{div_{y}}(-D_{M}\nabla_{y} v_{m}^{0})&=f_{a_{0}}  &\mbox{on}  \ &(0,T)\times\Sigma \times Z ,\\
(-D_{M}\nabla_{y} v_{m}^{0})\cdot n&=0 \;&\mbox{on}  &\ (0,T)\times\Sigma \times\left(\partial Z\backslash(Z_{L}\cup Z_{R})\right)\\
v_{l}^{0}(0,x)&=h_{b_{l}}^{0} & \mbox{ on }& {\Sigma} \times Z.
\end{aligned}
\end{equation}
\end{theorem}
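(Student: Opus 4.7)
The plan is to read off the strong formulation \eqref{meq}--\eqref{mbc4} from the two-scale limit identity \eqref{t7} of Theorem \ref{T8}, by testing with carefully chosen functions. I would first work with test triples $(\phi_1,\phi_2,\phi_3)$ supported away from the interface $\Sigma$, so that no transmission term appears. Taking $\phi_2\equiv 0$, $\phi_3\equiv 0$, and $\phi_1\in C_c^\infty((0,T)\times \Omega_{\mathcal{L}})$ in \eqref{t7}, integration by parts in $\bar x$ together with the fundamental lemma of the calculus of variations produces the first PDE in \eqref{meq}. Doing the same on the right with $\phi_1\equiv 0$, $\phi_2\equiv 0$, $\phi_3\in C_c^\infty((0,T)\times\Omega_{\mathcal{R}})$ yields the second PDE in \eqref{meq}. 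Next I would test with $\phi_1,\phi_3$ that do not vanish on $\Gamma_h\cap\partial\Omega_{\mathcal{L}/\mathcal{R}}$ but still vanish near $\Sigma$: this gives the Neumann boundary conditions \eqref{mbc3}. The Dirichlet conditions \eqref{mbc1} on $\Gamma_{\mathcal{L}}\cup\Gamma_{\mathcal{R}}$ follow from the trace-closure arguments underlying \eqref{t51}--\eqref{t54}, since $v_l^\varepsilon=0$ on $\Gamma_{\mathcal{L}}$ and $v_r^\varepsilon=0$ on $\Gamma_{\mathcal{R}}$ pass to the limit in the trace.

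For the cell problem, I would take $\phi_1\equiv 0$, $\phi_3\equiv 0$, and $\phi_2\in C_c^\infty((0,T)\times\Sigma;C^\infty_\#(\overline Z))$ with $\phi_2$ vanishing on $Z_{L}\cup Z_{R}$ so the coupling interface terms in \eqref{t7} disappear. Applying Green's formula in $y$ on $Z$ and invoking the arbitrariness of $\phi_2$ delivers the $y$-PDE $\partial_t v_m^0-\mathrm{div}_y(D_M\nabla_y v_m^0)=f_{a_0}$ on $(0,T)\times\Sigma\times Z$, together with the Neumann condition on $\partial Z\setminus(Z_L\cup Z_R)$ (which encodes both the periodic lateral walls of $Z$ and the obstacle boundary $\partial Y_0$, where the contribution of $\varepsilon^\xi g_0^\varepsilon$ has vanished since $\xi\ge 1/2$ gives a zero two-scale limit). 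The matching conditions \eqref{mbc2} were already established in Theorem \ref{T9}, so only the non-trivial flux condition remains.

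The main obstacle, and the most delicate step, is deriving the effective transmission condition \eqref{mbc4}. For this I would test \eqref{t7} with functions that are non-zero across the interface: specifically, choose $\phi_1\in C_c^\infty((0,T)\times(\Omega_{\mathcal{L}}\cup\Sigma))$, $\phi_3\in C_c^\infty((0,T)\times(\Omega_{\mathcal{R}}\cup\Sigma))$ and $\phi_2(t,\bar x,y)=\phi_1(t,\bar x)\mathbbm{1}_{Z_L}(y)+\phi_3(t,\bar x)\mathbbm{1}_{Z_R}(y)$, which is admissible after a standard density/regularization argument since the trace compatibility \eqref{T81}--\eqref{T82} has been established. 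Integration by parts on the bulk terms, using the already-derived PDEs \eqref{meq} to cancel the interior contributions, leaves jump terms on $\Sigma$ coming from $-D_L\nabla v_l^0\cdot n_l$ and $-D_R\nabla v_r^0\cdot n_r$ on the bulk side, while on the layer side the term $\int_\Sigma\int_Z D_M\nabla_y v_m^0\cdot \nabla_y\phi_2\,dy\,d\bar x\,dt$ reduces, after integrating by parts in $y$ and using the cell equation, to boundary contributions on $Z_L\cup Z_R$. Collecting these produces exactly \eqref{mbc4}, with the $u_b$-correction on the right-hand side arising from the last two boundary integrals in \eqref{t7}.

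Finally, the initial conditions follow directly from \eqref{t7ic} together with the regularity $v_l^0\in H^1(0,T;L^2(\Omega_{\mathcal{L}}))$ (and analogously for the other two components), which provides the necessary continuity in time for pointwise trace at $t=0$. The required regularities $v_l^0,v_r^0\in L^2(0,T;H^1)\cap H^1(0,T;L^2)$ and $v_m^0\in L^2((0,T)\times\Sigma;H^1_\#(Z))\cap H^1((0,T)\times\Sigma;L^2_\#(Z))$ have been obtained already in Theorems \ref{T6} and \ref{T7}, closing the proof.
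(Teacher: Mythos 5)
Your proposal is correct and is essentially the argument the paper intends: the paper's entire proof of Theorem \ref{T11} is the single sentence that it ``follows directly from Theorem \ref{T8} and Theorem \ref{T9}'', and your localization-by-test-functions derivation (compactly supported $\phi_1,\phi_3$ for the bulk PDEs, boundary-touching test functions for \eqref{mbc3}, $\phi_2$ supported away from $Z_L\cup Z_R$ for the cell problem, and interface-coupled test functions for \eqref{mbc4}) is precisely the standard unpacking of that claim. The only slip is notational: $Z_L$ and $Z_R$ are lateral boundary portions of the cell, so $\phi_2=\phi_1\mathbbm{1}_{Z_L}+\phi_3\mathbbm{1}_{Z_R}$ is not literally an admissible function on $Z$; what you need (and clearly intend) is a $\phi_2\in H^1_\#(Z)$ whose traces on $Z_L$ and $Z_R$ equal $\phi_1$ and $\phi_3$ respectively, mirroring the coupling constraint already built into $V_\varepsilon$, after which the integration by parts in $y$ produces exactly the $\int_{Z_L}$ and $\int_{Z_R}$ flux terms in \eqref{mbc4}.
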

\begin{proof}
The proof follows directly  from Theorem \ref{T8} and Theorem \ref{T9}.
\end{proof}
\begin{theorem}
	Assume \ref{assump1}-\ref{assump7}. Then for scaling choice S2, the macroscopic equation for $P_{\varepsilon}$ problem is:
	\begin{align}
	v_{l}^{0}&\in L^{2}((0,T);H^{1}(\Omega_{\mathcal{L}}))\cap H^{1}((0,T);L^{2}(\Omega_{\mathcal{L}})) \\
	v_{m}^{0}&\in L^{2}((0,T);\Sigma))\cap H^{1}((0,T); \Sigma) \\
	v_{r}^{0}&\in L^{2}((0,T);H^{1}(\Omega_{\mathcal{R}}))\cap H^{1}((0,T);L^{2}(\Omega_{\mathcal{R}})) 
	\end{align}
	satisfying 
	\begin{equation}\label{}
	\begin{aligned}
	\frac{\partial v_{l}^{0}}{\partial t} +\mathrm{div}(-D_{L}\nabla v_{l}^{0}+ B_{L}P_{\delta}(v_{l}^{0}-u_{b}))&=f_{b_{l}}  &\mbox{on}  \ (0,T)\times\Omega_{\mathcal{L}} ,\\
	\frac{\partial v_{r}^{0}}{\partial t} +\mathrm{div}(-D_{R}\nabla v_{r}^{0}+ B_{R}P_{\delta}(v_{r}^{0}-u_{b}))&=f_{b_{r}}  &\mbox{on}  \ \ (0,T)\times\Omega_{\mathcal{R}} ,\\
	\end{aligned}
	\end{equation}
	\begin{equation}\label{}
	\begin{aligned}
	v_{l}^{0}&=0   \mbox{       on      } (0,T)\times\Gamma_{\mathcal{L}}\\
	v_{r}^{0}&=0  \mbox{       on      }(0,T)\times \Gamma_{\mathcal{R}}\\
	\end{aligned}
	\end{equation} 
	\begin{equation}\label{}
	\begin{aligned}
	v_{l}^{0}(t,\bar{x},0)&=v_{m}^{0}(t,\bar{x})  \mbox{      for a.e     } (t,\bar{x})\in (0,T)\times \Sigma,\\
	v_{r}^{0}(t,\bar{x},0)&=v_{m}^{0}(t,\bar{x}) \mbox{     for a.e    }(t,\bar{x})\in (0,T)\times \Sigma,\\
	\end{aligned}
	\end{equation} 
	\begin{equation}\label{}
	\begin{aligned}
	(-D_{L}\nabla v_{l}^{0}+ B_{L}P_{\delta}(v_{l}^{0}-u_{b}))\cdot n_{l}&=  g_{b{_{l}}} \mbox{       on      } \left(\Gamma_{h}\cap \partial\Omega_{\mathcal{L}}\right)\times (0,T),\\
	(-D_{R}\nabla v_{r}^{0}+ B_{R}P_{\delta}(v_{r}^{0}-u_{b}))\cdot n_{r}&=  g_{b_{{r}}} \mbox{       on      } \left(\Gamma_{h}\cap \partial\Omega_{\mathcal{R}}\right)\times (0,T),\\
	\end{aligned}
	\end{equation}
	\begin{multline}\label{}
		(-D_{L}\nabla v_{l}^{0}+ B_{L}P_{\delta}(v_{l}^{0}-u_{b})+D_{R}\nabla v_{r}^{0}- B_{R}P_{\delta}(v_{r}^{0}-u_{b}))\cdot n_{l}\\
		=-|Z|\frac{\partial v_{0}^{m}}{\partial t}+\int_{ Z}f_{a_{0}}dy+ D_{L}\nabla_{\bar{x}} u_{b}(t,\bar{x},0)\cdot n_{l}
-D_{R}\nabla_{\bar{x}} u_{b}(t,\bar{x},0)\cdot n_{l},\\
\mbox{on}\hspace{.2cm} (0,T)\times \Sigma	\end{multline} 

	\begin{align}
	v_{l}^{0}(0,x)&=h_{b_{l}}^{0}  \mbox{       on      } \overline{\Omega}_{\mathcal{L}}\\
	v_{r}^{0}(0,x)&=h_{b_{r}}^{0}  \mbox{       on      } \overline{\Omega}_{\mathcal{R}}\\
	v_{m}^{0}(0,x)&=\frac{1}{|Z|}\int_{ Z}h_{b_{m}}^{0}(\bar{x},y)dy  \mbox{       on      } \overline{\Sigma}.
	\end{align}

\end{theorem}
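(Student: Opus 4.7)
The structure of the argument mirrors the triplet Theorems \ref{T8}, \ref{T9}, \ref{T11}; the decisive new input is that the scaling regime S2, with $\beta\in(0,1)$, collapses the fast variable in the layer and therefore eliminates the cell problem.

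\textbf{Step 1 (the layer limit is $y$-independent).} From \eqref{e2} we have $\varepsilon^{\beta}\|\nabla v_{m}^{\varepsilon}\|_{L^{2}(0,T;L^{2}(\Omega_{\mathcal M}^{\varepsilon}))}^{2}\le C$, so
\begin{equation*}
\frac{1}{\varepsilon}\|\varepsilon\nabla v_{m}^{\varepsilon}\|_{L^{2}(0,T;L^{2}(\Omega_{\mathcal M}^{\varepsilon}))}^{2}\le C\,\varepsilon^{1-\beta}\longrightarrow 0.
\end{equation*}
Since Theorem \ref{T7} provides (along a subsequence) $\varepsilon\nabla v_{m}^{\varepsilon}\overset{2-s}{\rightharpoonup}\nabla_{y}v_{m}^{0}$, this forces $\nabla_{y}v_{m}^{0}\equiv 0$ on $\Sigma\times Z$. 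Consequently $v_{m}^{0}=v_{m}^{0}(t,\bar x)$, which is precisely the expected regularity stated in the theorem.

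\textbf{Step 2 (bulk limits are unchanged).} The scalings in $\Omega_{\mathcal L}^{\varepsilon}\cup\Omega_{\mathcal R}^{\varepsilon}$ are identical to those of S1, hence Theorem \ref{T6} is available verbatim. Passing to the limit in the bulk portions of the weak formulation \eqref{wf}, exactly as in \eqref{t7e1}–\eqref{t7e2}, yields the strong forms of the two evolution equations in $\Omega_{\mathcal L}$ and $\Omega_{\mathcal R}$, the homogeneous Dirichlet data on $\Gamma_{v}$, the Neumann data on $\Gamma_{h}$, and the initial data $h_{b_{l}}^{0}$ and $h_{b_{r}}^{0}$ via the test-function-and-cutoff trick used in \eqref{t7e14}–\eqref{t7e15}.

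\textbf{Step 3 (effective interface equation: the main new computation).} In \eqref{wf} I would choose $\phi_{2}=\psi(t,\bar x)$ independent of $y$, and $(\phi_{1},\phi_{3})$ compatible with it across $\mathcal B_{\mathcal L}^{\varepsilon}$ and $\mathcal B_{\mathcal R}^{\varepsilon}$. Each layer term is then examined separately: (i) $\varepsilon^{\alpha}\int\partial_{t}v_{m}^{\varepsilon}\psi\,dx\,dt=\varepsilon^{-1}\int\partial_{t}v_{m}^{\varepsilon}\psi\,dx\,dt$ converges, by Theorem \ref{T7} and $y$-independence of $v_{m}^{0}$, to $|Z|\int_{\Sigma}\partial_{t}v_{m}^{0}\,\psi\,d\bar x\,dt$; (ii) the diffusive term is controlled by $\varepsilon^{\beta}\|\nabla v_{m}^{\varepsilon}\|_{L^{2}}\|\nabla\psi\|_{L^{2}(\Omega_{\mathcal M}^{\varepsilon})}\le C\varepsilon^{\beta/2+1/2}\to 0$; (iii) the drift term is $O(\varepsilon^{\gamma+1/2})\to 0$ using \eqref{e4} and $\gamma\ge\beta>0$; (iv) the source $\varepsilon^{-1}\int f_{a_{m}}^{\varepsilon}\psi\to\int_{\Sigma}\int_{Z}f_{a_{0}}\psi$ by \eqref{tsf}, which gives $\int_{\Sigma}(\int_{Z}f_{a_{0}}\,dy)\psi\,d\bar x\,dt$; (v) the remaining volumetric and surface source scalings satisfy $\varepsilon^{\beta+1/2}, \varepsilon^{\xi+1},\varepsilon^{\beta+1}\to 0$ under \ref{assump6} (S2), so $f_{b_{m}}^{\varepsilon}$, $g_{0}^{\varepsilon}$, $g_{b_{0}}^{\varepsilon}$ produce no limit contribution. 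The transmission integrands $\int_{\mathcal B_{\mathcal L}^{\varepsilon}}(D_{L}-\varepsilon^{\beta}D_{M})\nabla u_{b}\cdot n_{l}\,\phi_{1}\,d\sigma$ and the corresponding $\mathcal B_{\mathcal R}^{\varepsilon}$ term converge, using the trace convergence \eqref{t53}–\eqref{t54}, to $\int_{\Sigma}D_{L}\nabla_{\bar x}u_{b}(t,\bar x,0)\cdot n_{l}\psi\,d\bar x\,dt$ and $-\int_{\Sigma}D_{R}\nabla_{\bar x}u_{b}(t,\bar x,0)\cdot n_{l}\psi\,d\bar x\,dt$ respectively. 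Collecting all surviving terms and subtracting the bulk identities obtained in Step 2 yields the integral form of the transmission condition; localising in $\psi\in C_{c}^{\infty}(\Sigma)$ produces the stated pointwise jump relation.

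\textbf{Step 4 (matching and initial data).} The continuity $v_{l}^{0}(t,\bar x,0)=v_{m}^{0}(t,\bar x)=v_{r}^{0}(t,\bar x,0)$ follows by the same integration-by-parts and trace argument as in Theorem \ref{T9}, simplified by the fact that $v_{m}^{0}$ is already $y$-independent (so the identifications occur pointwise on $\Sigma$). Finally, choosing $\phi_{2}=\psi(t,\bar x)\Theta(t)$ with $\Theta(T)=0$ and applying \eqref{A53} exactly as in \eqref{t7e16}, the $y$-independence of $v_{m}^{0}(0,\cdot)$ forces $v_{m}^{0}(0,\bar x)=\frac{1}{|Z|}\int_{Z}h_{b_{m}}^{0}(\bar x,y)\,dy$.

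The principal obstacle I anticipate is the bookkeeping in Step 3: several terms have scalings whose borderline cases within \ref{assump6} (S2) must be verified sharply (in particular the surface term with $\xi=\beta-\tfrac12$, which is the worst admissible choice), and the identification of the coefficient $|Z|$ on $\partial_{t}v_{m}^{0}$ depends on using test functions strictly independent of $y$, which in turn requires consistent matching of $\phi_{1},\phi_{3}$ with $\psi$ at $\mathcal B_{\mathcal L}^{\varepsilon}$, $\mathcal B_{\mathcal R}^{\varepsilon}$ throughout the passage to the limit.
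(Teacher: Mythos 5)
Your proposal is correct and follows essentially the route the paper intends: the paper's own proof is only a one-line citation of Theorems \ref{T6} and \ref{T7}, the technique of Theorem \ref{T11}, and the convergence results of \cite{effectiveapratim}, and your Steps 2--4 are exactly that argument, while your Step 1 supplies explicitly the key fact (hidden in the citation to \cite{effectiveapratim}) that $\beta<1$ forces $\nabla_y v_m^0\equiv 0$ via lower semicontinuity of the rescaled $L^2$ norm under two-scale convergence. The exponent bookkeeping in Step 3 is slightly loose in places (e.g.\ the drift and surface terms), but under the S2 constraints every such term still vanishes, so the conclusion stands.
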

\begin{proof}
    Proof is application of Theorem \ref{T6} and Theorem \ref{T7} and follows via similar technique of proof of Theorem \ref{T11} and convergence results from  \cite{effectiveapratim}.
\end{proof}
\subsection{Macroscopic equation for finitely thin layer}\label{macroeftl}
To derive macroscopic equation for finitely thin layer we use the following assumption \ref{assumptionb1}, \ref{assumptionb2}, and \ref{assumptionb3} instead of \ref{assump3}, \ref{assump4} and \ref{assump5}
 \begin{enumerate}[label=({B}{{\arabic*}})]

 	 \item \label{assumptionb1}
For the reaction rate, we assume 
 	 $f_{b_{l}},\partial_{t}f_{b_{l}}\in L^{2}(0,T;L^{2}(\Omega_{\mathcal{L}}))$, \\
 	 $f_{b_{r}},\partial_{t}f_{b_{r}}\in L^
{2}(0,T;L^{2}(\Omega_{\mathcal{R}}))$,  $f_{b_{m}}^{\varepsilon},\partial_{t}f_{b_{m}}^{\varepsilon}\in L^{2}(0,T;L^{2}(\Omega_{\mathcal{M}}^{\varepsilon}))$ and \begin{equation}\label{}
 	 \varepsilon^{\alpha}\|f_{a_{m}}^{\varepsilon}\|_{L^{2}(0,T;L^{2}(\Omega_{\mathcal{M}}^{\varepsilon})}\leq C,
 	 \end{equation}
 	 for $a.e.$ $t\in (0,T).$Together we assume there exist $f_{a_{0}}\in L^{2}((0,T)\times \Omega_{\mathcal{M}}\times Z)$ such that
 	 	\begin{equation}\label{}
 	 	f_{a_{m}}^{\varepsilon}\overset{2-s}{\rightharpoonup}f_{a_{0}}.
 	 	\end{equation}
 	 \item \label{assumptionb2}
 	 $g_{b_{l}},\partial_{t}g_{b_{l}} \in L^{\infty}(0,T;L^{2}(\Gamma_{h}\cap \partial\Omega_{\mathcal{L}}))$,  $g_{b_{r}},\partial_{t}g_{b_{r}} \in L^{\infty}(0,T;L^{2}(\Gamma_{h}\cap \partial\Omega_{\mathcal{R}}))$, \\ $g_{b_{
0}}^{\varepsilon},\partial_{t}g_{b_{0}}^{\varepsilon} \in L^{\infty}(0,T;L^{2}(\Gamma_{0}^{\varepsilon}))$, $g_{{0}}^{\varepsilon},\partial_{t}g_{{0}}^{\varepsilon} \in L^{\infty}(0,T;L^{2}(\Gamma_{0}^{\varepsilon}))$ and 
 	 
 	 \begin{equation}\label{}
 	 \varepsilon^{\xi-\frac{1}{2}}\|g_{0}^{\varepsilon}\|_{L^{2}(\Gamma_{0}^{\varepsilon})}^{2}\leq C,
 	 \end{equation}
 	  \begin{equation}\label{}
 	 \varepsilon^{\beta-\frac{1}{2}}\|g_{b_{0}}^{\varepsilon}\|_{L^{2}(\Gamma_{0}^{\varepsilon})}^{2}\leq C,
  \end{equation}
 	 
 	 for $a.e.$ $t\in (0,T)$. Together we assume there exist $g_{0}\in L^{2}((0,T)\times \Omega_{\mathcal{M}} \times \partial Y_{0})$ such that
 	 \begin{equation}\label{}
 	 g_{0}^{\varepsilon}\overset{2-s}{\rightharpoonup}g_{0}.
 	 \end{equation}
 	 \item \label{assumptionb3}
 	 For initial conditions, we assume  $h_{b_{l}}^{\varepsilon}\in H^{1}(\Omega_{\mathcal{L}})$, $h_{b_{r}}^{\varepsilon}\in H^{1}(\Omega_{\mathcal{R}})$, $h_{b_{m}}^{\varepsilon}\in H^{1}(\Omega_{\mathcal{M}}^{\varepsilon})$  with
 	 \begin{equation}\label{}
 	 \|h_{b_{l}}^{\varepsilon}\|_{L^{2}(\Omega_{\mathcal{L}}^{\varepsilon})}^{2}+\|h_{b_{r}}^{\varepsilon}\|_{L^{2}(\Omega_{\mathcal{R}}^{\varepsilon})}^{2}+\varepsilon^{\alpha}\|h_{b_{m}}^{\varepsilon}\|_{L^{2}(\Omega_{\mathcal{M}}^{\varepsilon})}^{2}\leq C,
 	 \end{equation} \\
 	and 
 		\begin{align}
 	\mathbbm{1}_{\Omega_{\mathcal{L}}^{\varepsilon}}h_{b_{l}}^{\varepsilon}&\rightarrow h_{b_{l}}^{0}\hspace{2cm}\mbox{on}\hspace{1cm}L^{2}((0,T)\times\Omega_{\mathcal{L}}),\label{}\\
 	\mathbbm{1}_{\Omega_{\mathcal{R}}^{\varepsilon}}h_{b_{r}}^{\varepsilon}&\rightarrow h_{b_{r}}^{0}\hspace{2cm}\mbox{on}\hspace{1cm}L^{2}((0,T)\times\Omega_{\mathcal{R}}),\label{}\\
 	h_{b_{m}}^{\varepsilon}&\overset{2-s}{\rightharpoonup} h_{b_{r}}^{0}.\label{}
 	\end{align}
 
 \end{enumerate}
 On assumption \ref{assumptionb1}, \ref{assumptionb2} and \ref{assumptionb3} we use two scale convergence definition from \cite{lukkassen2002two}.
\begin{theorem}\label{T12}
	Assume \ref{assump1}, \ref{assump2}, \ref{assump6}, \ref{assump7} and \ref{assumptionb1}-\ref{assumptionb3}. Then for scaling choice S3, the macroscopic equation for $(P_{\varepsilon})$ problem is:
		\begin{align}
	v_{l}^{0}&\in L^{2}((0,T);H^{1}(\Omega_{\mathcal{L}}))\cap H^{1}((0,T);L^{2}(\Omega_{\mathcal{L}})) \\
	v_{m}^{0}&\in L^{2}((0,T)\times \Omega_{\mathcal{M}};H^{1}_{\#}(Z))\cap H^{1} ((0,T)\times \Omega_{\mathcal{M}};L_{\#}^{2}(Z))\\
	v_{r}^{0}&\in L^{2}((0,T);H^{1}(\Omega_{\mathcal{R}}))\cap H^{1}((0,T);L^{2}(\Omega_{\mathcal{R}})) 
	\end{align}
	satisfying
		\begin{equation}\label{}
	\begin{aligned}
	\frac{\partial v_{l}^{0}}{\partial t} +\mathrm{div}(-D_{L}\nabla v_{l}^{0}+ B_{L}P_{\delta}(v_{l}^{0}-u_{b}))&=f_{b_{l}}  &\mbox{on}  \ (0,T)\times\Omega_{\mathcal{L}} ,\\
	\frac{\partial v_{r}^{0}}{\partial t} +\mathrm{div}(-D_{R}\nabla v_{r}^{0}+ B_{R}P_{\delta}(v_{r}^{0}-u_{b}))&=f_{b_{r}}  &\mbox{on}  \ \ (0,T)\times\Omega_{\mathcal{R}} ,\\
	\end{aligned}
	\end{equation}
		\begin{equation}\label{}
	\begin{aligned}
	v_{l}^{0}&=0   \mbox{       on      } (0,T)\times\Gamma_{\mathcal{L}}\\
	v_{r}^{0}&=0  \mbox{       on      }(0,T)\times \Gamma_{\mathcal{R}}\\
	\end{aligned}
	\end{equation} 
	\begin{equation}\label{}
	\begin{aligned}
	(-D_{L}\nabla v_{l}^{0}+ B_{L}P_{\delta}(v_{l}^{0}-u_{b}))\cdot n_{l}&=  g_{b{_{l}}} \mbox{       on      } \left(\Gamma_{h}\cap \partial\Omega_{\mathcal{L}}\right)\times (0,T),\\
	(-D_{R}\nabla v_{r}^{0}+ B_{R}P_{\delta}(v_{r}^{0}-u_{b}))\cdot n_{r}&=  g_{b_{{r}}} \mbox{       on      } \left(\Gamma_{h}\cap \partial\Omega_{\mathcal{R}}\right)\times (0,T),\\
	\end{aligned}
	\end{equation}
	\begin{equation}\label{}
		\frac{\partial v_{m}^{0}}{\partial t} +\mathrm{div}_{y_{2}}(-\lambda_{1}D_{M}\nabla_{y_{2}} v_{m}^{0}+ \lambda_{2} B_{M}P_{\delta}(v_{m}^{0}-u_{b}))=f_{a_{m}}\hspace{1cm} \mbox{on}\;\;  \ (0,T)\times\Omega_{\mathcal{M}}\times
		Z ,
	\end{equation}
	\begin{equation}\label{}
	(-\lambda_{1}D_{M}\nabla_{y_{2}} v_{m}^{0}+ \lambda_{2} B_{M}P_{\delta}(v_{m}^{0}-u_{b}))\cdot n_{m}=g_{0}\hspace{1cm} \mbox{on}\;\;  \ (0,T)\times\Omega_{\mathcal{M}}\times
	\partial Y_{0}.
	\end{equation}
		\begin{align}
	v_{l}^{0}(t,\bar{x},0)=v_{m}^{0}(t,\bar{x},y) \hspace{1cm}\mbox{for a.e }(t,\bar{x})\in (0,T)\times \mathcal{B_{L}}\times Z_{L} \label{nn}\\
		v_{r}^{0}(t,\bar{x},0)=v_{m}^{0}(t,\bar{x},y) \hspace{1cm}\mbox{for a.e }(t,\bar{x})\in (0,T)\times \mathcal{B_{R}}\times Z_{R}\label{nnn}.
	\end{align}
	\begin{multline}\label{}
		(-D_{L}\nabla v_{l}^{0}+ B_{L}P_{\delta}(v_{l}^{0}-u_{b}))\cdot n_{l}\\=\int_{ Z_{L}}(-\lambda_{1}D_{M}\nabla_{y_{2}}v_{m}^{0}+\lambda_{2} B_{M}P_{\delta}(v_{m}^{0}-u_{b}))\cdot n_{l}dy_{2}-D_{L}\nabla u_{b}\cdot n_{l}\hspace{0.5cm} \mbox{on}\;  \ (0,T)\times\mathcal{B_{L}}.
	\end{multline}
		\begin{multline}\label{}
	(-D_{R}\nabla v_{l}^{0}+ B_{R}P_{\delta}(v_{l}^{0}-u_{b}))\cdot n_{r}\\=\int_{ Z_{R}}(-D_{M}\nabla_{y_{2}}v_{m}^{0}+B_{M}P_{\delta}(v_{m}^{0}-u_{b}))\cdot n_{r}dy_{2}-D_{R}\nabla u_{b}\cdot n_{r}\\ \mbox{on}\;  \ (0,T)\times\mathcal{B_{R}},
	\end{multline}
\begin{align}
v_{l}^{0}(0,x)&=h_{b_{l}}^{0}(x)  \mbox{       on      } \overline{\Omega_{\mathcal{L}}}\\
v_{r}^{0}(0,x)&=h_{b_{r}}^{0}(x)  \mbox{       on      } \overline{\Omega_{\mathcal{R}}}\\
v_{m}^{0}(0,x,y)&=h_{b_{m}}^{0}(x,y)  \mbox{       on      } \overline{\Omega}_{\mathcal{M}}\times \overline{Z},
\end{align}
	where $\lambda_{1}=1$, and $\lambda_{2}=1$ if $\gamma-\alpha =1$ and  $\lambda_{2} =0$ if $\gamma-\alpha >1$.
	\begin{proof}
	    The proof follows from Theorem 2 and the two scale compactness result from \cite{lukkassen2002two}.
	\end{proof}
\end{theorem}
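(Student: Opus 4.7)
The strategy parallels that of Theorems \ref{T8}--\ref{T11} for the infinitely thin layer, but now adapted to the fixed macroscopic domain $\Omega_{\mathcal{M}} = (-\kappa,\kappa)\times(0,h)$ which carries an $\varepsilon$-periodic array of obstacles in the vertical direction only. Since $\Omega_{\mathcal{M}}$ is independent of $\varepsilon$, the classical two-scale convergence framework of Allaire--Nguetseng (see \cite{lukkassen2002two}) applies directly in the layer, with microscopic variable $y\in Z$ attached to the macroscopic point $x\in\Omega_{\mathcal{M}}$. The key reason why a cell problem appears under scaling S3 is that $\beta-\alpha=2$: dividing the third equation of \eqref{wf} by $\varepsilon^{\alpha}$ turns the diffusion contribution into the canonical two-scale form $\varepsilon^{2}\,\mathrm{div}(D_M^{\varepsilon}\nabla v_m^{\varepsilon})$.

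First I would re-derive the analogues of the $\varepsilon$-independent estimates of Theorem \ref{T3} under the assumptions \ref{assumptionb1}--\ref{assumptionb3}; the proof is in fact simpler than that of Theorem \ref{T3} because $\Omega_{\mathcal{M}}$ is now a fixed Lipschitz domain, so the thin-layer trace inequality Lemma \ref{LN1} and the extension Lemma \ref{L3} can be replaced by standard Poincar\'e and trace inequalities. These estimates together with the classical two-scale compactness theorem yield $v_m^0\in L^{2}((0,T)\times\Omega_{\mathcal{M}};H^{1}_{\#}(Z))$ such that, up to a subsequence, $v_m^{\varepsilon}\overset{2-s}{\rightharpoonup}v_m^0$ and $\varepsilon\nabla v_m^{\varepsilon}\overset{2-s}{\rightharpoonup}\nabla_{y}v_m^0$, while the bulk convergences collected in Theorem \ref{T6} continue to hold.

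Next I would pass to the limit in \eqref{wf} after dividing the layer integrals by $\varepsilon^{\alpha}$. For the bulk equations I argue exactly as in Theorem \ref{T8}, using the strong $L^{2}$-convergence of $v_l^{\varepsilon},v_r^{\varepsilon}$, the weak convergence of their gradients and time-derivatives, and \eqref{t59}, \eqref{t510} for the drift. In the layer I take oscillating test functions of the form $\phi_2(t,x)+\varepsilon\psi(t,x,x/\varepsilon)$ with $\psi$ smooth and $Z$-periodic in $y$, and pass to the limit in each term: the normalized diffusion yields a bilinear form in $\nabla_y v_m^0$ and $\nabla_y\psi$ that reduces to $y_2$-derivatives only because $Z$ is periodic solely in the vertical direction; the surface source term $\varepsilon^{\xi-\alpha}g_0^{\varepsilon}$ with $\xi-\alpha\geq 1$ yields the datum $g_0$ on $\partial Y_0$ via \eqref{tsg} and Theorem \ref{T5}; the drift contribution scales as $\varepsilon^{\gamma-\alpha}$, giving the coefficient $\lambda_2=1$ when $\gamma-\alpha=1$ and $\lambda_2=0$ when $\gamma-\alpha>1$, while $\lambda_1=1$ comes directly from the diffusion balance. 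The transmission identities \eqref{nn}--\eqref{nnn} and the flux balances on $\mathcal{B_{L}},\mathcal{B_{R}}$ follow by testing with functions that cross the fixed interfaces and combining the perfect transmission \eqref{tc2} with the strong trace convergence on $\mathcal{B_{L}}$ and $\mathcal{B_{R}}$, exactly as in the proof of Theorem \ref{T9}. Initial conditions are recovered via the argument \eqref{t7e14}--\eqref{t7e16}.

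The principal difficulty is twofold. First, because the reference cell $Z$ is periodic only in $y_2$, the effective cell problem is essentially one-dimensional in the micro-variable, so one has to design the oscillating correctors so that only $y_2$-derivatives survive in the limit, and carefully separate the boundary contribution on $\partial Y_{0}$ from that on $\partial Z\setminus(Z_L\cup Z_R)$ where a homogeneous Neumann condition arises. Second, identifying $\lambda_2$ requires handling the two regimes $\gamma-\alpha=1$ and $\gamma-\alpha>1$ separately: the uniform bound \eqref{e4} combined with two-scale compactness shows that $P_{\delta}(v_m^{\varepsilon}-u_b)$ two-scale converges to $P_{\delta}(v_m^0-u_b)$, so multiplying by $\varepsilon^{\gamma-\alpha}$ gives a nontrivial limit precisely when $\gamma-\alpha=1$, which is exactly the dichotomy encoded in the statement.
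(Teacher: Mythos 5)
Your overall architecture coincides with what the paper intends: its proof of this theorem is a single line invoking the a priori estimates of Theorem \ref{T3} and the classical two-scale compactness of \cite{lukkassen2002two}, and your expansion (re-derive the energy estimates under \ref{assumptionb1}--\ref{assumptionb3}, apply classical two-scale compactness on the fixed domain $\Omega_{\mathcal{M}}$, pass to the limit in \eqref{wf} after dividing by $\varepsilon^{\alpha}$, and read off $\lambda_{1},\lambda_{2}$ from $\beta-\alpha=2$ and $\gamma-\alpha\geq 1$) is the correct skeleton for filling that line in.

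Two steps as written would, however, fail. First, the test function $\phi_{2}(t,x)+\varepsilon\psi(t,x,x/\varepsilon)$ is the wrong ansatz here. Its gradient is $\nabla_{x}\phi_{2}+\varepsilon\nabla_{x}\psi+\nabla_{y}\psi$, and since in the layer only $\varepsilon\nabla v_{m}^{\varepsilon}$ (not $\nabla v_{m}^{\varepsilon}$) is bounded in $L^{2}$, the normalized diffusion term $\varepsilon^{2}\int D_{M}^{\varepsilon}\nabla v_{m}^{\varepsilon}\cdot\nabla_{y}\psi\,dx=\varepsilon\int D_{M}^{\varepsilon}(\varepsilon\nabla v_{m}^{\varepsilon})\cdot\nabla_{y}\psi\,dx\to 0$, and likewise $\varepsilon^{\gamma-\alpha}\int B_{M}^{\varepsilon}P_{\delta}(\cdot)\cdot\nabla_{y}\psi\,dx\to 0$ when $\gamma-\alpha=1$; every layer term vanishes and no cell problem survives. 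You must instead test with $\psi(t,x,x/\varepsilon)$ unscaled, as in the proof of Theorem \ref{T8}, so that the $\varepsilon^{-1}\nabla_{y}\psi$ part of the chain rule cancels one power of $\varepsilon$ and leaves $\int\!\!\int D_{M}\nabla_{y}v_{m}^{0}\cdot\nabla_{y}\psi$ and, precisely when $\gamma-\alpha=1$, the drift term with $\lambda_{2}=1$. Second, the claim that $P_{\delta}(v_{m}^{\varepsilon}-u_{b})$ two-scale converges to $P_{\delta}(v_{m}^{0}-u_{b})$ does not follow from the bound \eqref{e4} plus compactness: weak two-scale compactness only yields convergence to some limit $w$, and identifying $w=P_{\delta}(v_{m}^{0}-u_{b})$ for a nonlinear $P_{\delta}$ requires a strong (two-scale) convergence of $v_{m}^{\varepsilon}$, e.g.\ via the time-derivative estimate and an Aubin--Lions/unfolding argument. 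This identification is genuinely needed in the regime $\gamma-\alpha=1$ where the drift survives, and it is glossed over both in your proposal and in the paper's own one-line proof.
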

\begin{remark}\label{choise s4}
The working strategy to obtain macroscopic equation for choice S4 is similar to that used to obtain the macroscopic equation for choice S3. The only difference in  macroscopic equations for the choice S3 and choice S4  is the value of $\lambda_{1}$.  We obtain $\lambda_{1}=1$ for the choice S3 while we obtain $\lambda_{1}=0$ for the choice S4.
\end{remark}
\section{Approximation of non-regularized problem}\label{anrp}

In this section, we propose a strategy that allows the vanishing of the parameter $\delta$ arising in our regularized nonlinear reaction-diffusion-convection problem, i.e. we replace the nonlinear operator $P_{\delta}(\cdot) $ cf.  \eqref{rd1}  by $P(\cdot)$ as defined in \eqref{pd2} and comment on what possibilities are available to handle a fully nonlinear oscillating drift. 
\par Within this section, we refer to the jointly $\varepsilon$- and $\delta$-dependent problem  \eqref{wf} and \eqref{ic}  as problem  $\left({P_{\varepsilon}^{\delta}}\right)$. Similarly, the  $\delta$ independent problem where $P_{\delta}(\cdot) $ replaced by $P(\cdot)$ in \eqref{wf} and \eqref{ic} is referred to as the  $\left({P_{\varepsilon}^{0}}\right)$ problem. What concerns the macroscopic equation \eqref{t7} with initial condition \eqref{t7ic}, we call it the $\left({P_{0}^{\delta}}\right)$ problem. Finally, we denote the  $\varepsilon$ and $\delta$ independent macroscopic equation   as the $\left({P_{0}^{0}}\right)$ problem. It appears  anytime $P_{\delta}(\cdot)$ is replaced in \eqref{t7} with \eqref{t7ic} by $P(\cdot)$. 

The hypotheses on data and parameters needed for the solvability of problems $\left({P_{\varepsilon}^{\delta}}\right)$, $\left({P_{\varepsilon}^{0}}\right)$, $\left({P_{0}^{\delta}}\right)$, and $\left({P_{0}^{0}}\right)$ are assumed to hold. In such case, the following approximation results hold:

\begin{theorem}\label{T13}
    If $(v_{l}^{\varepsilon,\delta},v_{m}^{\varepsilon,\delta},v_{r}^{\varepsilon,\delta})$ is the weak solution of $\left({P_{\varepsilon}^{\delta}}\right)$ and $(v_{l}^{\varepsilon,0},v_{m}^{\varepsilon,0},v_{r}^{\varepsilon,0})$ is the weak solution of $\left({P_{\varepsilon}^{0}}\right)$ problem, then as $\delta \rightarrow 0$,  $(v_{l}^{\varepsilon,\delta},v_{m}^{\varepsilon,\delta},v_{r}^{\varepsilon,\delta})$ $\rightarrow$ $(v_{l}^{\varepsilon,0},v_{m}^{\varepsilon,0},v_{r}^{\varepsilon,0})$ weakly in $L^{2}((0,T);H(\Omega_{\mathcal{L}}^{\varepsilon};\Gamma_{\mathcal{L}}))\times L^{2}((0,T);H^{1}(\Omega_{\mathcal{M}}^{\varepsilon}))\times H(\Omega_{\mathcal{R}}^{\varepsilon};\Gamma_{\mathcal{R}})$.
\end{theorem}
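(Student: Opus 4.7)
The plan is to fix $\varepsilon>0$, obtain a priori bounds on the family $(v_l^{\varepsilon,\delta},v_m^{\varepsilon,\delta},v_r^{\varepsilon,\delta})_{\delta>0}$ that are independent of $\delta$, extract weakly convergent subsequences, and then pass to the limit in the weak formulation (\ref{wf}) satisfied by $(P_\varepsilon^\delta)$. The key point making $\delta$-uniform estimates possible is that $\|P_\delta\|_{L^\infty(\mathbb R)}\le\|\rho\|_{L^1}\|P\|_{L^\infty(\mathbb R)}$ by Young's convolution inequality, so $\|P_\delta\|_{L^\infty}$ does not blow up as $\delta\to 0$. Inspecting the proof of Theorem \ref{T3}, estimates (\ref{e1}), (\ref{e2}) and (\ref{e4}) depend on $P_\delta$ only through $\|P_\delta\|_{L^\infty}$ and Mean Value Theorem bounds, so they immediately pass as $\delta$-uniform. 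The estimate (\ref{e3}) on $\partial_t v^\varepsilon$ as written uses $\|P_\delta'\|_{L^\infty}$, which is not $\delta$-uniform; I would replace it by a duality argument, testing the PDE against $\phi\in L^2(0,T;V_\varepsilon)$ and using (\ref{e2}) together with the $L^\infty$-bound on $P_\delta$ to conclude $\|\partial_t v_j^{\varepsilon,\delta}\|_{L^2(0,T;H^1(\Omega_j^\varepsilon;\Gamma_j)^\prime)}\le C$ uniformly in $\delta$.

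From these $\delta$-independent bounds, standard weak compactness provides a subsequence, still labelled by $\delta$, and limits $(v_l^{\varepsilon,0},v_m^{\varepsilon,0},v_r^{\varepsilon,0})$ such that $v_j^{\varepsilon,\delta}\rightharpoonup v_j^{\varepsilon,0}$ weakly in $L^2(0,T;H^1(\Omega_j^\varepsilon))$ and $\partial_t v_j^{\varepsilon,\delta}\rightharpoonup \partial_t v_j^{\varepsilon,0}$ weakly in the corresponding dual, for $j\in\{l,m,r\}$. Aubin--Lions compactness then yields strong convergence $v_j^{\varepsilon,\delta}\to v_j^{\varepsilon,0}$ in $L^2((0,T)\times\Omega_j^\varepsilon)$, hence a.e.\ convergence along a further subsequence. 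The transmission equalities $v_l^{\varepsilon,\delta}=v_m^{\varepsilon,\delta}$ on $\mathcal B_\mathcal{L}^\varepsilon$ and $v_r^{\varepsilon,\delta}=v_m^{\varepsilon,\delta}$ on $\mathcal B_\mathcal{R}^\varepsilon$ pass to the limit by continuity of the trace, so $(v_l^{\varepsilon,0},v_m^{\varepsilon,0},v_r^{\varepsilon,0})\in L^2(0,T;V_\varepsilon)$.

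The main obstacle is the identification of the limit of the nonlinear drift term $\int_{\Omega_j^\varepsilon}B_j\,P_\delta(v_j^{\varepsilon,\delta}-u_b)\cdot\nabla\phi\,dx$. I would split
\begin{equation*}
P_\delta(v_j^{\varepsilon,\delta}-u_b)-P(v_j^{\varepsilon,0}-u_b)=\bigl[P_\delta(v_j^{\varepsilon,\delta}-u_b)-P_\delta(v_j^{\varepsilon,0}-u_b)\bigr]+\bigl[P_\delta(v_j^{\varepsilon,0}-u_b)-P(v_j^{\varepsilon,0}-u_b)\bigr].
\end{equation*}
The second bracket is controlled by noting that $P_\delta\to P$ pointwise outside the jump set $\{0,1\}$ of $P$, uniformly away from that set; combined with $\|P_\delta\|_{L^\infty}\le C$ and dominated convergence it vanishes in $L^2((0,T)\times\Omega_j^\varepsilon)$ (with an auxiliary argument near the small $\delta$-neighbourhood of the jumps whose contribution is $O(\delta^{1/2})$ using that $v_j^{\varepsilon,0}-u_b\in L^2$). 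For the first bracket, one rewrites $P_\delta(a)-P_\delta(b)=\int_\mathbb R\rho_\delta(y)[P(a-y)-P(b-y)]\,dy$ and exploits the strong $L^2$-convergence $v_j^{\varepsilon,\delta}\to v_j^{\varepsilon,0}$ together with the fact that $P$ is bounded, applying Fubini and the continuity of translations in $L^2$. This gives $P_\delta(v_j^{\varepsilon,\delta}-u_b)\to P(v_j^{\varepsilon,0}-u_b)$ strongly in $L^2$, enough to pass to the limit against $B_j\cdot\nabla\phi\in L^2$.

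The remaining linear terms pass to the limit by the weak convergences already established, so $(v_l^{\varepsilon,0},v_m^{\varepsilon,0},v_r^{\varepsilon,0})$ satisfies the weak formulation of $(P_\varepsilon^0)$ with the correct initial data (these convergences in the initial trace follow from $\partial_t v^{\varepsilon,\delta}$ bounds and standard trace arguments). Because $(P_\varepsilon^0)$ possesses a unique weak solution by the analogue of Theorem \ref{T1} (which needs only $\|P\|_{L^\infty}<\infty$), the whole family, not only a subsequence, converges. I expect the nonlinear drift identification outlined above to be the single delicate step; the rest of the argument is a routine compactness-and-pass-to-the-limit procedure inside a fixed $\varepsilon$-scale geometry.
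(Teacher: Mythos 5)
Your proposal is correct in spirit and substantially more complete than what the paper actually writes down: the paper's proof of Theorem \ref{T13} consists of two sentences pointing to the mollifier convergence $P_{\delta}\rightarrow P$ in $L^{2}(\mathbb{R})$ and to the working ideas of \cite{Pokorny}, with no compactness argument, no $\delta$-uniform estimates, and no identification of the nonlinear limit. What you supply is precisely the machinery that citation is standing in for: $\delta$-uniform bounds, weak compactness, Aubin--Lions, passage to the limit in \eqref{wf}, and uniqueness to upgrade from a subsequence to the whole family. Two of your observations are genuine improvements over a literal reading of the paper. First, you correctly note that the bound \eqref{e3} on $\partial_{t}v^{\varepsilon}$ is obtained in Theorem \ref{T3} through $\|P_{\delta}^{\prime}\|_{L^{\infty}}$, which blows up like $\delta^{-1}$ because $P$ jumps at $0$ and $1$; replacing it by a duality estimate in $L^{2}(0,T;(H^{1})^{\prime})$ is the right fix and is all Aubin--Lions needs. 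Second, your two-term decomposition of $P_{\delta}(v^{\varepsilon,\delta}-u_{b})-P(v^{\varepsilon,0}-u_{b})$ and the translation-continuity argument for the first bracket is a clean way to identify the limit of the drift term, which the paper does not address at all.

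One step of your argument is not yet watertight, and it is a gap the paper shares rather than resolves: the convergence $P_{\delta}(w)\rightarrow P(w)$ in $L^{2}$ for $w=v^{\varepsilon,0}-u_{b}$ fails on the level sets $\{w=0\}$ and $\{w=1\}$ if these have positive measure, since $P_{\delta}$ converges there to a mollified average of the one-sided limits of $P$ rather than to $P$ itself. Your claimed $O(\delta^{1/2})$ control of the contribution near the jumps only bounds it by $\|P\|_{L^{\infty}}$ times the square root of the measure of $\{|w|<\delta\}\cup\{|w-1|<\delta\}$, and that measure need not tend to zero; it tends to the measure of the level sets. To close this you would need either an additional hypothesis guaranteeing that these level sets are null (a condition the paper implicitly assumes by invoking $P_{\delta}\rightarrow P$ in $L^{p}(\mathbb{R})$ and composing freely), or a weaker notion of limit for the drift. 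Apart from this shared caveat, your route is sound and is, in effect, the proof the paper should have written.
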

\begin{proof}
   To prove this result we rely on the basic working ideas from \cite{Pokorny}. The proof follows via a direct application of the convolution property (see Theorem 4.22 from \cite{brezis2010functional}). We take $\delta \rightarrow 0 $ in $\left({P_{\varepsilon}^{\delta}}\right)$ and apply the property of convolution which is  $P_{\delta}(r) \rightarrow {P}(r)$ in $L^{2}(\mathbb{R})$ strongly as $\delta \rightarrow 0$. See \cite{Pokorny} for related arguments.
\end{proof}

\begin{theorem}\label{T14}
    If $(v_{l}^{0,\delta},v_{m}^{0,\delta},v_{r}^{0,\delta})$ is the weak solution of $\left({P_{0}^{\delta}}\right)$ and $(v_{l}^{0,0},v_{m}^{0,0},v_{r}^{0,0})$ is the weak solution of $\left({P_{0}^{0}}\right)$ problem, then as $\delta \rightarrow 0$,  $(v_{l}^{0,\delta},v_{m}^{0,\delta},v_{r}^{0,\delta})$ $\rightarrow$ $(v_{l}^{0,0},v_{m}^{0,0},v_{r}^{0,0})$ weakly in $L^{2}((0,T);H^{1}(\Omega_{\mathcal{L}}))\times L^{2}((0,T)\times\Sigma;H_{\#}^{1}({Z}))\times L^{2}((0,T);H^{1}(\Omega_{\mathcal{R}}))$.
\end{theorem}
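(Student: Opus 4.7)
The plan parallels the strategy of Theorem \ref{T13}, now at the macroscopic level. The starting point is the weak formulation \eqref{t7}--\eqref{t7ic} for $(v_l^{0,\delta}, v_m^{0,\delta}, v_r^{0,\delta})$, together with the crucial uniform-in-$\delta$ estimate $\|P_\delta\|_{L^\infty(\mathbb{R})} \leq \|P\|_{L^\infty(\mathbb{R})}$, which makes it possible to decouple the $\delta$-dependence from the energy estimates.

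First I would derive $\delta$-independent a priori bounds for $(v_l^{0,\delta}, v_m^{0,\delta}, v_r^{0,\delta})$, mirroring the proof of Theorem \ref{T3}: test \eqref{t7} with the solution itself and then with its time derivative, invoke assumption \ref{assump1} and the uniform $L^\infty$ control of $P_\delta$, and conclude via Young's, Cauchy--Schwarz's, trace and Gronwall's inequalities. This produces uniform bounds on $v_l^{0,\delta}, v_r^{0,\delta}$ in $L^\infty(0,T;L^2)\cap L^2(0,T;H^1)$, on $v_m^{0,\delta}$ in $L^\infty(0,T;L^2(\Sigma\times Z))\cap L^2((0,T)\times\Sigma;H^1_\#(Z))$, and analogous bounds for the time derivatives. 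From these estimates I would extract weakly convergent subsequences $v_l^{0,\delta}\rightharpoonup v_l^{0,0}$, $v_r^{0,\delta}\rightharpoonup v_r^{0,0}$, $v_m^{0,\delta}\rightharpoonup v_m^{0,0}$ in the spaces displayed in the statement and, via the Aubin--Lions compactness lemma, promote these to strong $L^2$ convergence of the bulk components; up to a further subsequence, one also has almost everywhere convergence.

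The principal difficulty is the passage to the limit in the nonlinear drift terms $B_j P_\delta(v_j^{0,\delta}-u_b)\nabla\phi_j$ for $j\in\{l,r\}$ appearing in \eqref{t7}. The plan is to split
\begin{equation*}
P_\delta(v_j^{0,\delta}-u_b) - P(v_j^{0,0}-u_b) = \bigl[P_\delta(v_j^{0,\delta}-u_b) - P(v_j^{0,\delta}-u_b)\bigr] + \bigl[P(v_j^{0,\delta}-u_b) - P(v_j^{0,0}-u_b)\bigr],
\end{equation*}
and exploit the following three ingredients: (i) the pointwise convergence $P_\delta(r)\to P(r)$ for every $r$ outside the finite jump set of $P$ (a direct consequence of the mollifier definition \eqref{pd1}), (ii) the a.e.\ convergence of $v_j^{0,\delta}$ to $v_j^{0,0}$ inherited from the Aubin--Lions step, and (iii) the uniform $L^\infty$ bound $\|P_\delta\|_{L^\infty}\leq\|P\|_{L^\infty}$. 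Combining (i)--(iii) via the dominated convergence theorem yields $P_\delta(v_j^{0,\delta}-u_b)\to P(v_j^{0,0}-u_b)$ strongly in $L^2((0,T)\times\Omega_j)$, provided the set where $v_j^{0,0}-u_b$ attains the finite jump set of $P$ has measure zero (a nondegeneracy property typically guaranteed by the parabolic regularity of $v_j^{0,0}$).

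Finally, the linear terms in \eqref{t7} pass to the limit by standard weak/strong pairings, and the cell equation, which under scaling S1 carries no drift contribution at leading order (cf.\ \eqref{t7e7}), passes by pure weak convergence. The limit triple $(v_l^{0,0}, v_m^{0,0}, v_r^{0,0})$ then satisfies the weak formulation of $(P_0^0)$; uniqueness for this limit problem (obtained by testing the difference of two solutions against itself, using the $L^\infty$ boundedness of $P$ and a Gronwall argument) upgrades the subsequential convergence to convergence of the entire $\delta$-family, yielding the claim.
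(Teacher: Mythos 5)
The paper's own proof of this theorem is essentially a two-line reference: it says the argument ``follows similar lines'' as Theorem \ref{T13}, which in turn is proved by invoking the mollifier property $P_{\delta}\rightarrow P$ strongly in $L^{p}(\mathbb{R})$ and pointing to \cite{Pokorny} and Theorem 4.22 of \cite{brezis2010functional}. Your proposal is therefore a genuinely different and far more self-contained route: you rebuild $\delta$-uniform energy estimates at the macroscopic level, extract weak limits, use Aubin--Lions to get strong convergence of the bulk components, and handle the composition $P_{\delta}(v_{j}^{0,\delta}-u_{b})$ by splitting it into a ``mollification error'' and a ``continuity of $P$'' term. This correctly identifies the real difficulty that the paper's citation-style proof glosses over: the strong $L^{2}(\mathbb{R})$ convergence of the \emph{functions} $P_{\delta}\rightarrow P$ does not by itself let you pass to the limit in $P_{\delta}$ composed with a $\delta$-dependent argument; some compactness of $v_{j}^{0,\delta}$ and an a.e.\ argument are needed, exactly as you set up.

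Two steps in your plan are genuine gaps rather than routine verifications. First, your dominated-convergence step requires that the sets $\{v_{j}^{0,0}-u_{b}=0\}$ and $\{v_{j}^{0,0}-u_{b}=1\}$ (the jump set of $P$ as defined in \eqref{pd2}) have measure zero; you flag this as ``typically guaranteed by parabolic regularity,'' but it is not: a weak solution of a linear parabolic problem can perfectly well coincide with $u_{b}$ or $u_{b}+1$ on a set of positive measure (constant regions are not excluded by any estimate proved in the paper), so this nondegeneracy must either be assumed or proved, and neither is done here. Second, your upgrade from subsequential to full-family convergence rests on uniqueness for $\left(P_{0}^{0}\right)$ ``by testing the difference of two solutions against itself, using the $L^{\infty}$ boundedness of $P$ and a Gronwall argument.'' This step fails as stated: the Gronwall absorption of the drift difference $\int B\left(P(v_{1}-u_{b})-P(v_{2}-u_{b})\right)\nabla(v_{1}-v_{2})$ needs a Lipschitz bound $|P(r_{1})-P(r_{2})|\leq L|r_{1}-r_{2}|$, and $P$ is discontinuous at $0$ and $1$, so $L^{\infty}$ boundedness is not enough. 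Without uniqueness you obtain only convergence along a subsequence, which is weaker than the stated claim (the paper itself acknowledges in its conclusion that it does not control the non-regularized drift in a parameter-independent way, which is consistent with this obstruction).
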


\begin{proof}
    The proof follows similar lines as when proving  Theorem \ref{T13}.
\end{proof}

Combining Theorem \ref{T13} and Theorem \ref{T14} , we  conclude that the weak solution to $\left({P_{\varepsilon}^{0}}\right)$ can be approximated in terms of the weak  solution to $\left({P_{0}^{0}}\right)$. We indicate this fact in  flowchart shown in  Fig \ref{flowchart}.

\tikzstyle{io}=[rectangle,rounded corners, minimum width=5em, minimum height=3em, text centered, draw=black]
\tikzstyle{arrow}=[thick, ->, >=stealth]

\begin{figure}[ht]
		\begin{center}
			\begin{tikzpicture}{node distance=3em, auto}
			\node (ed) [io] {$\left({P_{\varepsilon}^{0}}\right)$};
			\node (e0) [io, below of=ed, yshift=10em] {$\left({P_{\varepsilon}^{\delta}}\right)$};
			\node (0d) [io, right of=ed, xshift=10em] {$\left({P_{0}^{0}}\right)$};
			\node (00) [io, below of=0d, yshift=10em] {$\left({P_{0}^{\delta}}\right)$};
			\draw[arrow] (e0) -- node [anchor=south]{$\varepsilon \rightarrow 0$}(00);
			\draw[arrow] (e0) -- node [anchor=east]{$\delta \rightarrow 0$}(ed);
			\draw[arrow] (00) -- node [anchor=east]{$\delta \rightarrow 0$}(0d);
			\end{tikzpicture}
			\caption{Flowchart showing links between our parameter-dependent problems. }
			\label{flowchart}
		\end{center}
	\end{figure}
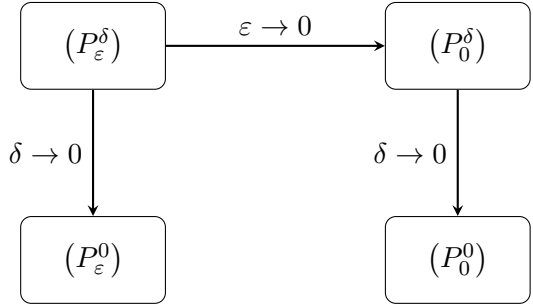

\section{Conclusion}\label{conclusion}

Starting off from a setting involving reaction-diffusion with nonlinear drift crossing a periodically perforated layer,  we derived upscaled equations, some of them reduced dimensionally, as well as effective transmission conditions for different choice of scalings in terms of a small heterogeneity parameter called $\varepsilon$ for the diffusion and drift transport terms as well as for the microscopic surface reaction rates. To pass to the homogenization limit $\varepsilon\to 0$, we used both the classical concept of two-scale convergence (see e.g. \cite{nguetseng1989general}, \cite{allaire1992homogen}) as well as  the concept of two-scale layer convergence (see \cite{neuss2007effective}), depending on the used parametric scaling. The second type of convergence is able to handle simultaneously periodic homogenization and dimension reduction limits.

\par A number of distinct limit upscaled model equations have been obtained in this framework. It is worth noting that our list is not exhaustive. Some more cases can be added. However, we believe that these options are potentially the most relevant ones if one has in mind the physical problem. At this moment, we are unable to classify, in the spirit of Occam’s razor, which of these models is best. A robust multiscale numerical approach as well as access to flux measurements for a given flat membrane with controlled regular internal structure are ingredients needed to make such comparisons. This is yet to be done. 


\par We studied here only the 2D case. We did  that because the derivation of the original problem has been done for an interacting particle system in 2D. Our convergence results  extend to higher dimensions without additional mathematical difficulties. However, we expect that eventual numerical approximations of the proposed problems are harder in 3D compared to 2D. Notice also the fact that our rectangular microstructures can be replaced in theory by any other type of inclusion having Lipschitz boundary and satisfying the restriction $\partial Y \cap  Y_{0}=\emptyset$. 

\par We expect that the diagram shown in Figure \ref{flowchart} is commutative. However, more mathematical results still need to be obtain to support such statement. The main issue is that, currently, we do not control in a parameter independent way the non-regularized nonlinear drift.  We believe that the way of working proposed in \cite{Marui2000TwoscaleCF} will turn to be useful to clarify this matter. It is also worth to study the corrector estimates of our problem since it can give idea about how good our approximation is. We expect that the method proposed e.g. in \cite{VoAnh1282488} and in \cite{gahn2021correctors} can be used to derive corrector estimates for our problem.


\section*{Acknowledgements} The work of V.R. and A.M. is partly supported by the project  "Homogenization and dimension reduction of thin heterogeneous layers", grant nr. VR 2018-03648 of the Swedish Research Council.
\par A.M. thanks Maria Neuss-Radu (Erlangen, Germany) and Willi J\"ager (Heidelberg, Germany) for many discussions on this and related topics during the last years. This work has been finalized during the friendly and inspiring atmosphere of the Institut Mittag Leffler workshop "New trends in numerical multiscale methods and beyond", Stockholm, July 12 -- 16, 2021. 

\bibliographystyle{amsplain}
	\bibliography{thinlayerhomogenization}

\end{document}